\newcommand*{\LargerCdot}{\raisebox{-0.25ex}{\scalebox{1.7}{$\cdot$}}}
\theoremstyle{plain}
\newtheorem{thm}{Theorem}[section]
\newtheorem*{theorem*}{Theorem}
\newtheorem{lem}[thm]{Lemma}
\newtheorem{cor}[thm]{Corollary}
\newtheorem{prop}[thm]{Proposition}
\theoremstyle{definition}
\newtheorem{defi}[thm]{Definition}
\newtheoremstyle{remark}{2ex}{2ex}{}{}{\bfseries}{.}{.5em}{}
\theoremstyle{remark}
\newtheorem{rmk}[thm]{Remark}
\newtheorem{nota}[thm]{Notation}
\DeclareMathOperator{\THH}{THH}
\DeclareMathOperator{\HH}{HH}
\DeclareMathOperator{\Tor}{Tor}
\DeclareMathOperator{\colim}{colim}
\DeclareMathOperator{\K}{K}
\DeclareMathOperator{\TC}{TC}
\DeclareMathOperator{\Hom}{Hom}
\DeclareMathOperator{\ku}{ku}
\newcommand{\F}{\mathbb{F}}
\newcommand{\Fp}{\mathbb{F}_p}
\newcommand{\HFp}{{H\mathbb{F}_p}}
\newcommand{\Zp}{\mathbb{Z}_p}
\newcommand{\id}{\mathrm{id}}
\tikzset{commutative diagrams/.cd,
mysymbol/.style={start anchor=center,end anchor=center,draw=none}
}
  \newcommand\MySym[2][(-1)^{l}]{%
   \newcommand\MySymbo[2][(-1)^{t}]{%
\title{On the Brun spectral sequence for topological Hochschild homology}
\author{Eva H\"oning}
\address{Max-Planck-Institut f\"ur Mathematik,
Vivatsgasse 7, 53111 Bonn, Germany}
\email{hoening@mpim-bonn.mpg.de}
\begin{document}

\begin{abstract}
We generalize a spectral sequence of Brun for the computation of topological Hochschild homology. The generalized version computes the $E$-homology of $\THH(A;B)$, where $E$ is a ring spectrum, $A$ is a commutative $S$-algebra and $B$ is a connective commutative $A$-algebra. 
The input of the spectral sequence are the topological Hochschild homology groups of $B$ with coefficients in the $E$-homology groups of $B \wedge_A B$. 
The mod $p$ and $v_1$  topological Hochschild homology of connective complex $K$-theory has been computed by Ausoni and later again by Rognes, Sagave and Schlichtkrull.   
We present an alternative, short computation using the generalized Brun spectral sequence. 
\end{abstract}

\maketitle
\section{Introduction}

Topological Hochschild homology assists in computations of algebraic $K$-theory.  There is a map $\K(-) \to \THH(-)$, the Dennis trace, from algebraic $K$-theory to topological Hochschild homology that factors over topological cyclic homology.  Topological cyclic homology can be calculated from topological Hochschild homology using homotopy fixed points and Tate spectral sequences. In a number of cases topological cyclic homology is a good approximation of algebraic $K$-theory.  For example, by \cite{HM}  $\K(\mathbb{Z}_p)_p$ is equivalent to the connective cover of $\TC(\mathbb{Z}_p)_p$ and by \cite{DGM} the same follows  for every connective  $S$-algebra $B$ with $\pi_0(B) = \mathbb{Z}_p$.

The goal of this paper is to provide an additional tool for the computation of topological Hochschild homology by generalizing a spectral sequence of Brun \cite{Brun}. 
As an example, we apply the spectral sequence to compute 
the mod $p$ and $v_1$ topological Hochschild homology of $p$-completed connective complex $K$-theory, $V(1)_*\THH(\ku)$, for primes  $p \geq 5$.

The $V(1)$-homotopy of $\THH(\ku)$ has been first  computed by Ausoni in \cite{Au} using the B\"okstedt spectral sequence. 
Later, Rognes, Sagave and Schlichtkrull gave a different proof
using logarithmic topological Hochschild homology \cite{RoSaSch}. 
Starting from his $\THH$-computation Ausoni also computed 
the $V(1)$-homotopy of $\K(\ku)$ \cite{Au2}. 

We use the generalized Brun spectral sequence introduced in this paper in \cite{Ho} to compute the mod $p$ and $v_1$ topological Hochschild homology of the algebraic $K$-theory of finite fields, $V(1)_*\THH(\K(\mathbb{F}_q))$,  in certain cases.

\subsection*{The spectral sequence}

Recall that for a commutative $S$-algebra $B$ the B\"okstedt spectral sequence has the form 
\[ E^2_{*,*} = \HH_{*,*}^{\mathbb{F}_p}\bigl((H\mathbb{F}_p)_*B\bigr) \Longrightarrow (H\mathbb{F}_p)_*\THH(B).\]
Here, $H\mathbb{F}_p$ is the Eilenberg-Mac Lane spectrum of $\mathbb{F}_p$ and $\HH^{\mathbb{F}_p}_{*,*}(-)$ denotes ordinary Hochschild homology over the ground ring $\mathbb{F}_p$. 
The spectral sequence is particularly useful if the pages $E^r_{*,*}$ are flat over the mod $p$ homology of $B$. By \cite{AnRo}  the spectral sequence is an $(H\mathbb{F}_p)_*H\mathbb{F}_p$-comodule $(H\mathbb{F}_p)_*B$-bialgebra spectral sequence in this case.  This structure can be very helpful to compute the differentials.  However, if the flatness condition is not satisfied, computations with the B\"okstedt spectral sequence can be harder. For example, if $B = \ku$ the flatness condition is not satisfied, and  if $B = \K(\mathbb{F}_q)$ it is not always satisfied. 
The B\"okstedt spectral can be generalized from $H\mathbb{F}_p$ to a more general ring spectrum $E$, if we have a 
K\"unneth isomorphism for $E$.  But, for example for the mod $p$ Moore spectrum  $V(0)$ and for 
 $V(1)$ one does not have a K\"unneth isomorphism, so that the B\"okstedt spectral sequence does not allow a direct computation of $V(0)$- or $V(1)$-homotopy. 

The main goal of this paper is to prove the following theorem: 
\begin{thm}
Let $A$ be a cofibrant commutative $S$-algebra and let $B$ be a connective cofibrant commutative $A$-algebra. Let $E$ be a ring spectrum. 
Then, there is a strongly convergent, multiplicative spectral sequence of the  form
\[E^2_{n,m}  = \pi_n\THH(B; HE_m(B \wedge_A B)) \Longrightarrow E_{n+m}\THH(A;B).\] 
If $E_m(B \wedge_A B)$ is an $\mathbb{F}_p$-vector space for all $m$ and if $\pi_0(B)/{p\pi_0(B)} = \mathbb{F}_p$ as rings, we have 
\[E^2_{n,m} = E_m(B \wedge_A B) \otimes_{\mathbb{F}_p} \pi_n\THH(B;H\mathbb{F}_p).\] 
\end{thm}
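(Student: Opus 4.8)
The theorem has two parts: the construction of the spectral sequence, and the identification of its $E^{2}$-page under the flatness-type hypotheses. I take the first part as established — it arises, following Brun, from a bar-construction model for $\THH(A;B)\simeq B\wedge_{B\wedge_A B}B$ refined by the Postnikov filtration of $E\wedge(B\wedge_A B)$ — and concentrate on the second. So fix $m$, put $N:=E_m(B\wedge_A B)=\pi_m(E\wedge(B\wedge_A B))$, which is an $\mathbb{F}_p$-vector space by hypothesis and a $\pi_0(B)$-bimodule through the left and right inclusions $B\to B\wedge_A B$, and write $HN$ for the corresponding $B$-bimodule Eilenberg--Mac Lane spectrum.

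The crux is to recognise this coefficient bimodule. Its two $\pi_0(B)$-actions are the restrictions of the $\pi_0(E\wedge(B\wedge_A B))$-module structure on $N$ along the ring maps $\pi_0(B)\to\pi_0(E\wedge(B\wedge_A B))$ induced by the two factor inclusions. Since $N$ is $p$-torsion, each one-sided action kills $p\pi_0(B)$ — for $a\in\pi_0(B)$ and $x\in N$ one has $(pa)\cdot x=a\cdot(px)=0$ — so it factors through the canonical surjection $\pi_0(B)\twoheadrightarrow\pi_0(B)/p\pi_0(B)$; identifying the target with $\mathbb{F}_p$ by the second hypothesis and noting that the unique ring homomorphism $\mathbb{F}_p\to\mathrm{End}_{\mathbb{F}_p}(N)$ is scalar multiplication, we conclude that the left and right actions agree and both equal the $\mathbb{F}_p$-vector space structure of $N$. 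Since $HN$ is an Eilenberg--Mac Lane spectrum this means that $HN$ is a \emph{symmetric} $B$-bimodule, obtained by restriction along the ring map $B\to H\pi_0(B)\to H\mathbb{F}_p$ from the $H\mathbb{F}_p$-module $HN$; and as an $H\mathbb{F}_p$-module $HN$ is free, $HN\simeq\bigvee_{I}H\mathbb{F}_p$ with $I$ an $\mathbb{F}_p$-basis of $N$, concentrated in degree $0$. Both hypotheses are needed here essentially: without the ring-level statement $\pi_0(B)/p\pi_0(B)=\mathbb{F}_p$ the two one-sided actions need not coincide and the remaining argument collapses.

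The rest is formal. In simplicial degree $k$ the cyclic bar construction computing $\THH(B;HN)$ has spectrum $HN\wedge B^{\wedge k}\simeq HN\wedge_{H\mathbb{F}_p}(H\mathbb{F}_p\wedge B^{\wedge k})$, and since every face and degeneracy uses only the multiplication of $B$ together with the now $H\mathbb{F}_p$-linear, symmetric coefficient action, this simplicial spectrum is $HN\wedge_{H\mathbb{F}_p}(-)$ applied to the one computing $\THH(B;H\mathbb{F}_p)$ with its standard $H\mathbb{F}_p$-module structure. As $HN\wedge_{H\mathbb{F}_p}(-)$ commutes with geometric realisation, $\THH(B;HN)\simeq HN\wedge_{H\mathbb{F}_p}\THH(B;H\mathbb{F}_p)$, a relative smash product of two $H\mathbb{F}_p$-modules over the field spectrum $H\mathbb{F}_p$; hence its homotopy is $\pi_*(HN)\otimes_{\mathbb{F}_p}\pi_*\THH(B;H\mathbb{F}_p)$ with no higher terms, and since $\pi_*(HN)=N$ is concentrated in degree $0$ this gives $\pi_n\THH(B;HN)\cong N\otimes_{\mathbb{F}_p}\pi_n\THH(B;H\mathbb{F}_p)$, that is, $E^{2}_{n,m}=E_m(B\wedge_A B)\otimes_{\mathbb{F}_p}\pi_n\THH(B;H\mathbb{F}_p)$. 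The one genuinely delicate step is the symmetric-bimodule identification of the second paragraph; everything after it is routine juggling of the cyclic bar construction together with flatness of $\mathbb{F}_p$ over itself.
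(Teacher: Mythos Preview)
Your argument for the identification of the $E^2$-term is essentially the paper's: both proofs recognise that the two $\pi_0(B)$-actions on $E_m(B\wedge_A B)$ factor through $\pi_0(B)/p\pi_0(B)=\mathbb{F}_p$, so that the $B^e$-module structure on the Eilenberg--Mac Lane spectrum is pulled back from an $H\mathbb{F}_p$-module structure along $B^e\to H\mathbb{F}_p$, and then extract the tensor decomposition by base change. The paper works with the model $\THH(B;M)=M\wedge_{B^e}^L B$ and writes $HN\wedge_{B^e}\Gamma^{B^e}B\cong HN\wedge_{H\mathbb{F}_p}(H\mathbb{F}_p\wedge_{B^e}\Gamma^{B^e}B)$ followed by the Tor spectral sequence; you perform the analogous split level-wise in the cyclic bar construction. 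Same idea, different packaging.

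Two points deserve correction. First, the formula $\THH(A;B)\simeq B\wedge_{B\wedge_A B}B$ in your sketch of the construction is wrong: for $A=S$ the left side is $B$ while the right side is $\THH(B)$. The identification the paper actually proves and uses is $\THH(A;B)\cong (B\wedge_A B)\wedge_{B^e}^L B$ with $B^e=B\wedge_S B$, and the spectral sequence is the Atiyah--Hirzebruch spectral sequence over $B^e$ for the $B^e$-module $\Gamma^S E\wedge_S(B\wedge_A B)$ applied to $B$; your ``Postnikov filtration of $E\wedge(B\wedge_A B)$'' points in the right direction, but the ambient ring is $B^e$, not $B\wedge_A B$. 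Second, the theorem asserts a \emph{multiplicative} spectral sequence and the tensor-product identification of $E^2$ is meant to respect products; you do not address this. The paper devotes the second half of its argument to checking that each step in the chain of isomorphisms is compatible with the pairings coming from $F\wedge_{B^e}F\to F$ and from a cellular representative of $B\wedge_{B^e}^L B\to B$, and this bookkeeping is not entirely formal.
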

If  $E = S$ is the sphere spectrum and if $A \to B$ is given by applying the Eilenberg-Mac Lane spectrum functor $H(-)$ to a morphism of commutative rings,  the spectral sequence above has the same form as the one by Brun in   \cite[Theorem 6.2.10]{Brun}. 
Our proof is inspired by Brun's proof, but it is not a direct generalization. 
Brun works with functors with smash product and functors with tensor product and his spectral sequence is based on a bisimplicial abelian group.  
We work with $S$-algebras  in the sense of \cite{EKMM} and our  spectral sequence is an  instance of the Atiyah-Hirzebruch spectral sequence.

\subsection*{Application to ku}
We  apply the generalized Brun spectral sequence to ku. The computation is organized in the following three steps:  
\begin{itemize}
\item Computation of 
\[V(0)_*(H\mathbb{Z}_p \wedge_{\ku} H\mathbb{Z}_p) = \pi_*(H\mathbb{F}_p \wedge_{\ku} H\mathbb{Z}_p)\]
via the K\"unneth spectral sequence.
Here, $\mathbb{Z}_p$ is the ring of $p$-adic integers.
\item  Computation of $V(0)_*\THH(\ku; H\mathbb{Z}_p)$ via the generalized Brun spectral sequence 
\[ E^2_{*,*} = V(0)_*(H\mathbb{Z}_p \wedge_{\ku} H\mathbb{Z}_p) \otimes_{\mathbb{F}_p} \pi_*\THH(H\mathbb{Z}_p; H\mathbb{F}_p) \Longrightarrow V(0)_*\THH(\ku; H\mathbb{Z}_p).\]
Here, note that $\pi_*\THH(H\mathbb{Z}_p; H\mathbb{F}_p)$ has been computed by B\"okstedt \cite{Bo2}. 
\item Computation of $V(1)_*\THH(\ku)$ via the generalized Brun spectral sequence 
\[ E^2_{*,*} = V(1)_*\ku \otimes_{\mathbb{F}_p} \pi_*\THH(\ku; H\mathbb{F}_p) \Longrightarrow V(1)_*\THH(\ku).\]
Here, note that $\pi_*\THH(\ku; H\mathbb{F}_p) = V(0)_*\THH(\ku; H\mathbb{Z}_p)$. 
\end{itemize} 
The first generalized Brun spectral sequence mentioned above has to collapse at the $E^2$-page and there cannot be any multiplicative extensions. 
The differentials in the second generalized Brun spectral sequence mentionend above can be computed by using the relation $u^{p-1} = 0$ in $(H\mathbb{F}_p)_*\ku$, where $u \in (H\mathbb{F}_p)_2\ku$ is the image of the Bott element under the Hurewicz map $\pi_*(\ku) \to (H\mathbb{F}_p)_*\ku$. 
 In contrast to the Bökstedt spectral sequence  for $\ku$ there are no multiplicative extensions in the Brun spectral sequence  and we obtain the following concise description of $V(1)_*\THH(\ku)$:
 
\begin{thm}
The $V(1)$-homotopy of $\THH(\ku)$ is the homology of the differential graded algebra 
\[ P_{p-1}(u) \otimes_{\mathbb{F}_p} E(\sigma u, \lambda_1) \otimes_{\mathbb{F}_p} P(\mu_1), \, \, \, d(\mu_1) = u^{p-2} \sigma u.\] 
Here, $P(-)$ denotes the polynomial algebra over $\mathbb{F}_p$, $E(-)$ denotes the exterior algebra over $\mathbb{F}_p$ and $P_{p-1}(-)$ denotes the truncated polynomial algebra over $\mathbb{F}_p$.  The degrees are given by $|u| = 2$, $|\sigma u | = 3$, $|\lambda_1| = 2p-1$ and $|\mu_1| = 2p$. 
\end{thm}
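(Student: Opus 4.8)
The plan is to carry out the three-step computation sketched above: each step is one application of the generalized Brun spectral sequence (in the first step, of the K\"unneth spectral sequence), and the stated description of $V(1)_*\THH(\ku)$ emerges from the last step. First I would compute $V(0)_*(H\Zp\wedge_{\ku}H\Zp)$. After $p$-completion $\ku/u\simeq H\Zp$, and $u$ acts trivially on $H\Zp$ regarded as a $\ku$-module, so smashing the cofibre sequence $\Sigma^2\ku\xrightarrow{u}\ku\to H\Zp$ over $\ku$ with $H\Zp$ gives $H\Zp\wedge_{\ku}H\Zp\simeq H\Zp\vee\Sigma^3 H\Zp$ (equivalently, the K\"unneth spectral sequence computing $\Tor^{\pi_*\ku}_{*,*}(\Zp,\Zp)$ has only two columns and collapses). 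Hence $V(0)_*(H\Zp\wedge_{\ku}H\Zp)=\Fp$ in degrees $0$ and $3$, and $0$ otherwise.

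I would then run the generalized Brun spectral sequence with $A=\ku$, $B=H\Zp$, $E=V(0)$. By B\"okstedt's theorem $\pi_*\THH(H\Zp;\HFp)=E(\lambda_1)\otimes P(\mu_1)$, concentrated in degrees $0$ and $2pj-1,\,2pj$ for $j\geq1$, so the $E^2$-page is the sum of the two rows $\pi_*\THH(H\Zp;\HFp)$ and $\epsilon\cdot\pi_*\THH(H\Zp;\HFp)$ with $|\epsilon|=3$. The only differential that can be nonzero jumps between these two rows, and it vanishes on the algebra generators $\lambda_1,\mu_1$ because $\pi_{2p-5}\THH(H\Zp;\HFp)=\pi_{2p-4}\THH(H\Zp;\HFp)=0$ for $p\geq5$; by multiplicativity the spectral sequence collapses, and the same sparseness forbids multiplicative extensions. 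This yields $\pi_*\THH(\ku;\HFp)=V(0)_*\THH(\ku;H\Zp)=E(\sigma u,\lambda_1)\otimes P(\mu_1)$, where $\sigma u$ denotes $\epsilon$, the degree-$3$ class detected by the circle-action operator $\sigma$ applied to the Bott class $u$.

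For the main step I would apply the generalized Brun spectral sequence once more, with $A=B=\ku$ and $E=V(1)$, so that $\THH(A;B)=\THH(\ku)$, $B\wedge_A B=\ku$, and, since $v_1$ acts as $u^{p-1}$ and hence $V(1)_*\ku=P_{p-1}(u)$, the $E^2$-page is $P_{p-1}(u)\otimes_{\Fp}E(\sigma u,\lambda_1)\otimes_{\Fp}P(\mu_1)$, with the $V(1)_*\ku$-direction living in only the $p-1$ degrees $0,2,\dots,2p-4$. The classes $1,u,\dots,u^{p-2}$ spanning $V(1)_*\ku$ inside $E^2$ are permanent cycles, and the retraction $\ku\to\THH(\ku)\to\ku$ shows they remain nonzero in the abutment; this already forces $d(\sigma u)=0$ (otherwise $u$ would be hit and killed), and a similar support argument using the vanishing of $\pi_n\THH(\ku;\HFp)$ for $1\le n\le2p-4$, $n\neq3$ ($p\geq5$) gives $d(\lambda_1)=0$. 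By multiplicativity the remaining differentials are determined by those on $\mu_1$: the first one that can be nonzero is $d^{2p-3}(\mu_1)=c\,u^{p-2}\sigma u$ for some $c\in\Fp$ (the only nonzero group in the target position), and no longer differential can occur since its target would involve $V(1)_m\ku$ with $m>2p-4$. Hence $E^{2p-2}=E^\infty$ is the homology of $\bigl(P_{p-1}(u)\otimes E(\sigma u,\lambda_1)\otimes P(\mu_1),\ d(\mu_1)=c\,u^{p-2}\sigma u\bigr)$.

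The remaining point, $c\neq0$, is where I expect the real difficulty. My argument: the circle action makes $\sigma$ a derivation on $V(1)_*\THH(\ku)$ with $\sigma(u)=\sigma u$, so $\sigma(u^{p-1})=(p-1)\,u^{p-2}\sigma u$; but $u^{p-1}=0$ in $V(1)_*\ku=P_{p-1}(u)$, hence $u^{p-1}=0$ in $V(1)_*\THH(\ku)$, so $\sigma(u^{p-1})=0$ and therefore $u^{p-2}\sigma u=0$ in $V(1)_*\THH(\ku)$ (as $p-1$ is a unit mod $p$). If $c$ were $0$ the spectral sequence would collapse at $E^2$, and then the nonzero class $u^{p-2}\sigma u\in E^\infty$ would detect the product $u^{p-2}\cdot\sigma u$, forcing it to be nonzero---a contradiction. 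Hence $c\neq0$, and after rescaling $\mu_1$ we may take $c=1$. A final sparseness check (again using that $V(1)_*\ku$ has only $p-1$ nonzero degrees, together with the derivation property of $\sigma$) rules out hidden multiplicative extensions, so $V(1)_*\THH(\ku)\cong E^\infty$ as $\Fp$-algebras, which is exactly the homology of the asserted differential graded algebra. Everything apart from the nonvanishing of this one differential is bookkeeping with very sparse $E^2$-pages.
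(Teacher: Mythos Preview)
Your overall three-step plan matches the paper's exactly, and steps one and two are fine. There are, however, two genuine gaps in your step three.

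\textbf{The nonvanishing of the differential.} Your argument for $c\neq 0$ hinges on the relation $u^{p-2}\sigma(u)=0$ in $V(1)_*\THH(\ku)$, obtained by applying the derivation $\sigma$ to $u^{p-1}=0$. But this only forces the $E^\infty$-class $u^{p-2}\sigma u$ to die if you know that $\sigma(u)\neq 0$ in $V(1)_3\THH(\ku)$; otherwise the relation is vacuous and tells you nothing about the spectral-sequence class named $\sigma u$. You never verify $\sigma(u)\neq 0$. The paper does not work directly in $V(1)$-homotopy here: it passes to $(H\Fp)_*\THH(\ku)$, uses the B\"okstedt spectral sequence to see that $\sigma_*(u)\neq 0$ there, observes that $u$ and $\sigma_*(u)$ are $A_*$-comodule primitives, and then uses that $V(1)\wedge_S^L\ell\simeq H\Fp$ so that the Hurewicz map $V(1)_*\THH(\ku)\to (H\Fp)_*(V(1)\wedge_S^L\THH(\ku))$ hits exactly the primitives. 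This produces nonzero classes $u',\sigma u'\in V(1)_*\THH(\ku)$ with $(u')^{p-2}\sigma u'=0$, which is what is actually needed.

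\textbf{Multiplicative extensions.} Your claim that ``sparseness together with the derivation property of $\sigma$'' rules out hidden extensions is not correct. The $E^\infty$-page, described in the paper as $E(\lambda_1)\otimes\Omega^\infty_{*,*}$, has classes in lower filtration at the relevant total degrees: for instance, when checking $a_ib_j=ua_{i+j}$ one finds $\lambda_1u^2b_{i+j-1}$ sitting in strictly lower filtration at the same total degree, and for $a_ia_j=0$ there are several such classes. Sparseness does not exclude these, and it is not clear how the $\sigma$-derivation would. The paper resolves this with a genuinely additional piece of structure: the Galois automorphism $\delta$ of order $p-1$ (coming from the model $\ku=\K(\cup\,\F_{l^{p^i(p-1)}})_p$) acts on the Brun spectral sequence, splitting everything into eigenspaces (``$\delta$-weights''). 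One computes that $u,\sigma u$ have weight $1$ while $\lambda_1,\mu_2$ have weight $0$, so the potential correction terms have $\delta$-weight $3$ whereas the products in question have weight $2$, and the extensions are excluded. This argument, including the verification that $\lambda_1$ and $\mu_1^p$ have weight $0$ via comparison with the Adams summand $\ell$, is the substantive content of the paper's final theorem and is missing from your outline.
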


\subsection*{Structure of the paper}

We  work in the setting of Elmendorf, Kriz, Mandell and May \cite{EKMM}. In Section \ref{Presec} we collect some of the properties of the categories we are working with. 
The main goal of Section \ref{AtSec} is to give a detailed proof of the multiplicativity of the Atiyah-Hirzebruch spectral sequence. 
In Subsection \ref{Multsec} we transfer some methods of   \cite{Du} by Dugger about multiplicative structures on spectral sequences to the EKMM-setting.  
In Subsection  \ref{AHsec} we use this to prove the multiplicativity of the Atiyah-Hirzebruch spectral sequence and some additional properties. 
The content of Sections \ref{Presec} and \ref{AtSec} consist essentially of recollections, and are included here to provide details that we did not find in the literature.  
In Section \ref{Brunsubsec} we apply the Atiyah-Hirzebruch spectral sequence to derive the existence of the generalized Brun spectral sequence. 
Finally, in Section  \ref{kusec} we compute $V(1)_*\THH(\ku)$ using the Brun spectral sequence. 

\subsection*{Acknowledgments}
The content of this article is part of my PhD thesis. I would like to thank my PhD supervisor Christian Ausoni  for his help. 
This work was supported by grants from R\'egion \^Ile-de-France and the project ANR-16-CE40-0003 ChroK.  I also would like to thank the MPIM Bonn for providing ideal working conditions during my postdoc position.

\section{Preliminaries} \label{Presec}

We assume familiarity with the theory of Elmendorf, Kriz, Mandell and May \cite{EKMM}. 
 We denote the category of  (compactly generated weak Hausdorff) spaces by $\mathscr{U}$ and the category of based spaces by $\mathscr{T}$.   
Let $\mathscr{S}$, $\mathscr{S}[\mathbb{L}]$ and $\mathscr{M}_S$ be the categories of spectra, of $\mathbb{L}$-spectra and of  $S$-modules,  respectively. 
For a commutative $S$-algebra $R$  let $\mathscr{M}_R$ be the category of $R$-modules. 
We denote the tensor products of the closed symmetric monoidal categories  $\mathscr{M}_S$ and $\mathscr{M}_R$ by $\wedge_S$ and $\wedge_R$.

Recall from \cite[Chapter VII]{EKMM} that the categories $\mathscr{S}$, $\mathscr{S}[\mathbb{L}]$, $\mathscr{M}_S$ and $\mathscr{M}_R$ are  topological model categories.  In particular, these categories are topologically complete and cocomplete. The tensor  with a based space $X$ is denoted by $- \wedge X$. Colimits and tensors in $\mathscr{M}_R$, $\mathscr{M}_S$ and $\mathscr{S}[\mathbb{L}]$ are created in $\mathscr{S}$. 
By \cite[p.132]{EKMM} the functors considered in \cite{EKMM} are enriched functors.
We point out  that 
for a $\mathscr{T}$-enriched functor $F: \mathscr{C} \to \mathscr{D}$ between $\mathscr{T}$-tensored categories we have a natural morphism 
 \begin{equation*} \label{reinz}
 \begin{tikzcd}
 F(c) \wedge X \ar{r} & F(c \wedge X), 
 \end{tikzcd}
 \end{equation*}
 which is an isomorphism if  $F$ is the left adjoint of an adjoint pair of $\mathscr{T}$-enriched functors.
 A homotopy  is a  map of the form $E \wedge I_+ \to F$ in the respective category, where $I = [0,1]$ is the unit interval, and the homotopy categories are denoted by   $h\mathscr{S}$, $h\mathscr{S}[\mathbb{L}]$, $h\mathscr{M}_S$ and $h\mathscr{M}_R$.  




We denote the homotopy category of the model category $\mathscr{M}_R$ by $\mathscr{D}_R$. 
Since all objects in $\mathscr{M}_R$ are fibrant, cell $R$-modules are cofibrant and since $M \wedge I_+$ is a good cylinder object if $M$ is a cofibrant $R$-module, we get that 
\[ \mathscr{D}_R(M, N) \cong h\mathscr{M}_R(\Gamma^R M, \Gamma^RN) \cong h\mathscr{M}_R(\Gamma^RM, N),\]  where $\Gamma^R M$ and $\Gamma^RN$ are cell approximations of $M$ and $N$. 
The category $\mathscr{D}_R$ is a tensor triangulated category in the sense of \cite[Definition 1.1]{Bal}. 
 The tensor product 
$\wedge_R^L$  is given by
\[ M \wedge_R^L N \coloneqq \Gamma^R M \wedge_R \Gamma^R N.\]
If $M$ is a cofibrant $R$-module, then $M \wedge_R -$ preserves weak equivalences. Thus,  for any $M, N \in \mathscr{D}_R$  we have that $M \wedge_R^L N$ is isomorphic in $\mathscr{D}_R$ to $\Gamma^RM \wedge_R N$ and $M \wedge_R \Gamma^R N$.

To deal with the signs in the spectral sequences we follow Dugger's treatment in \cite{Du} and fix orientations on various  manifolds:
\begin{rmk}  \label{or}
 Equip $\mathbb{R}$ with the usual orientation and $\mathbb{R}^n$ with the product orientation. 
These orientations induce orientations on $D^n$ 
and $I$.  
We equip $I$ with the basepoint $0$, and $D^n$  and the sphere $S^{n-1}$ with the basepoint $(-1, 0, \dots, 0)$. 
The orientations on $D^n$ and $I$ induce orientations on the boundaries $S^{n-1}$ and $\{0,1\}$. Here, we use  the same convention as in  \cite[Remark 2.2]{Du}  for the boundary orientation, so that the basepoint in $S^0$ has orientation $-1$ and  the non-basepoint in $S^0$ has orientation $+1$ and so that we have the  
following formula for two manifolds $M$, $N$ with boundary:
\begin{equation} \label{randformel}
\partial(M \times N) = \partial M \times N \cup (-1)^{\dim(M)} M \times \partial N.
\end{equation}
 The orientations on the interiors of $D^n$  and $I$  determine orientations on $D^n/S^{n-1}$  and  on $I/{\partial I}$.  
For $n \geq 1$ or $m \geq 1$ the product orientation on the right-hand side of 
\[S^n \wedge S^m  \setminus \{*\} \cong S^{n} \setminus \{\ast\} \times S^m \setminus \{\ast\}\]
 defines an orientation on $S^n \wedge S^m$.  In $S^0 \wedge S^0$ we equip the non-basepoint with the orientation $1$ and the basepoint with the orientation $-1$. 
For the rest of this paper we fix basepoint-preserving, orientation-preserving homeomorphisms $D^n/S^{n-1} \cong S^n$, $S^n \wedge S^m
 \cong S^{n+m}$  and $I/{\partial I} \cong S^1$. 
\end{rmk}

To deal with the signs in the spectral sequences we give a precise definition of suspension isomorphisms and boundary maps in long exact sequences: 
Recall that we have left adjoint functors 
 \[\begin{tikzcd}[column sep = 8 ex]
\mathscr{S}    \ar{r}{\mathbb{L}(-)}  &  \mathscr{S}[\mathbb{L}]    \ar{r}{S \wedge_{\mathscr{L}} (-)} &  \mathscr{M}_S \ar{r}{R \wedge_S (-)} &  \mathscr{M}_R,
 \end{tikzcd}\] 
  whose composition is denoted by  $\mathbb{F}_R$. 
Spheres in $\mathscr{M}_R$  are denoted by $S_R^n$ and  are defined by applying $\mathbb{F}_R(-)$ to the  spheres $S^n$ in $\mathscr{S}$.  Note  that the $S^n_R$ are CW $R$-modules and that $S^0_R$ is a cell approximation of $R$. 
The homotopy groups of an $R$-module $M$  are given by 
\[ \pi_n(M) = h\mathscr{M}_R(S^n_R, M). \]
For $n \geq 0$ we define an isomorphism $S^n \wedge S^1 \cong S^{n+1}$  in $\mathscr{S}$ by 
  \[ \Sigma^{\infty}S^{n} \wedge S^1 \cong \Sigma^{\infty} (S^{n} \wedge S^1) \cong \Sigma^{\infty}S^{n+1}.\]
  Here, $\Sigma^{\infty}$ is the left adjoint to the zeroth space functor $\Omega^{\infty}$ and 
the second map is given by the fixed orientation-preserving homeomorphism.  For $n < 0$ we define an isomorphism $S^n \wedge S^1 \cong S^{n+1}$ using 
 \cite[Proposition I.4.2]{LMSt}. 
 We define isomorphisms $S^n_R \wedge S^1 \cong S^{n+1}_R$  by 
 \[ \mathbb{F}_RS^n \wedge S^1 \cong \mathbb{F}_R(S^{n} \wedge S^1) \cong \mathbb{F}_RS^{n+1}.\]
 These isomorphisms define  suspension isomorphisms $\pi_n(M) \cong \pi_{n+1}(M \wedge S^1)$ for $M \in \mathscr{M}_R$. 
 Let  $f: M \to N$  be a morphism in  $\mathscr{M}_R$ and let $C(f) \coloneqq N \cup_M  (M \wedge I)$  be its cofiber.  We use the fixed homeomorphism $I/{\partial I} \cong S^1$ and the suspension isomorphism to define the boundary map in the long exact  sequence
\[\begin{tikzcd}
 \cdots \ar{r} & \pi_n(M) \ar{r} & \pi_n(N) \ar{r} &  \pi_n(C(f)) \ar{r}{d} &  \pi_{n-1}(M) \ar{r} & \cdots.
 \end{tikzcd}\] 
If $f$ satisfies the homotopy extension property, then the canonical map $C(f) \to N/M \coloneqq \ast \cup_M N$ is a homotopy equivalence.
Following \cite{EKMM} we call a map in $\mathscr{M}_R$ satisfying the homotopy extension property a cofibration.
 Note that a cofibration is not necessary a cofibration in the model categorical sense.

In the proof of the existence of the Brun spectral sequence we will need that different functors behave  
well with respect to algebras and ring spectra. For this observe the following: 
The canonical functor $i_R: \mathscr{M}_R \to \mathscr{D}_R$ is lax symmetric monoidal. The structure maps are given by applying $i_R$ to the maps
\[\begin{tikzcd}
 \Gamma^R M \wedge_R \Gamma^R N \ar{r} & M \wedge_R N. 
\end{tikzcd}\]
Moreover, for a morphism of commutative $S$-algebras $\phi: R \to R'$ we have a commutative diagram 
\[\begin{tikzcd}
    \mathscr{M}_{R'} \ar{r}{\phi^*} \ar{d}{i_{R'}} & \mathscr{M}_R \ar{d}{i_R} \\
    \mathscr{D}_{R'} \ar{r}{\phi^*} & \mathscr{D}_R.
    \end{tikzcd}\]
All the functors in this diagram are lax symmetric monoidal and the structure maps of the two lax monoidal functors $i_R \circ \phi^*$  and $\phi^* \circ i_{R'}$ agree.  
Now, let $\phi \colon R \to R'$ and $\psi \colon R' \to  R''$ be maps of commutative $S$-algebras. Then, the two functors 
  $ (\psi \circ \phi)^*, \phi^* \circ \psi^*: \mathscr{D}_{R''} \to \mathscr{D}_{R}$ are the same and the structure maps of these  two lax monoidal functors agree.
 Recall that an $R$-ring spectrum is an object $A \in \mathscr{D}_R$ with maps 
$A \wedge_R^LA \to A$ and  $R \to A$ in $\mathscr{D}_R$ satisfying the left and right unit laws. 
As  a consequence of the above we get that for $R$-algebras $M$ and $N$ the map
\[\begin{tikzcd}
M \wedge_R^L N \ar{r} & M \wedge_R N 
\end{tikzcd}\]
in $\mathscr{D}_R$ is a map of $R$-ring spectra. 
Moreover, for maps of commutative $S$-algebras $R \xrightarrow{\phi} R' \xrightarrow{\psi} R''$ we get commutative diagrams
 \[\begin{tikzcd}
    \mathscr{A}_{R'} \ar{r}  \ar{d} & \mathscr{A}_{R} \ar{d} \\
    \mathscr{R}_{R'} \ar{r} & \mathscr{R}_{R}
    \end{tikzcd}\] 
    and 
    \[\begin{tikzcd}
    \mathscr{R}_{R''} \ar{r}{\psi^*} \ar{rd}[swap]{(\psi \circ \phi)^*} & \mathscr{R}_{R'} \ar{d}{\phi^*} \\
    & \mathscr{R}_{R}.
    \end{tikzcd}\]
  Here,  $\mathscr{A}_R$ and $\mathscr{A}_{R'}$ denote the categories of algebras over $R$ and $R'$, and $\mathscr{R}_R$, $\mathscr{R}_{R'}$ and $\mathscr{R}_{R''}$ denote the categories of ring spectra over $R$, $R'$ and $R''$, respectively.

 We now recall some details about the lax monoidality of the functor $\pi_*(-)$, because we need these in the proof of the multiplicativity of the spectral sequences. 
 By \cite[p.92]{EKMM} there are homotopy equivalences  
\begin{equation*} \label{speq}
S^k_R \wedge_R S^l_R \simeq S^{k+l}_R,
\end{equation*}
  that are unital, associative and commutative up to the sign $(-1)^{kl}$.  They define 
  maps
\begin{equation*}
\begin{tikzcd} \label{Kunnethmor}
\mathscr{D}_R(S^k_R, M) \otimes_{\mathbb{Z}} \mathscr{D}_R(S^l_R, N) \ar{r} & \mathscr{D}_R(S^{k+l}_R, M \wedge_R^L N)
\end{tikzcd}
\end{equation*}
for $M, N \in \mathscr{D}_R$ which make $\pi_*(-)$ into a lax symmetric monoidal functor from $\mathscr{D}_R$ to the category of $\mathbb{Z}$-graded abelian groups $\mathscr{G}\mathscr{A}$. Here, the symmetry in $\mathscr{G}\mathscr{A}$  is given by 
\begin{equation*} \label{signsym}
 A_* \otimes_{\mathbb{Z}} B_* \to B_* \otimes_{\mathbb{Z}} A_*; \, \, \, a \otimes b \mapsto (-1)^{|a||b|} b \otimes a, 
 \end{equation*} 
 where $|a|$ and $|b|$ denotes the degrees of $a$ and $b$. 
The equivalences $S^k_R \wedge_R S^l_R \simeq S^{k+l}_R$ are compatible with  suspension in the sense that the diagrams
\begin{equation} \label{susp1}
\begin{tikzcd} 
S^k_R \wedge_R S^l_R \ar{rr} \MySym{drr} \ar{d} & & S^{k+l}_R \ar{d} \\
(S^{k-1}_R \wedge S^1) \wedge_R S^l_R \ar{r} & (S^{k-1}_R \wedge_R S^l_R) \wedge S^1 \ar{r} & S^{k+l-1}_R \wedge S^1   
\end{tikzcd}
\end{equation}

\begin{equation} \label{susp2}
\begin{tikzcd}
S^k_R \wedge_R S^l_R \ar{rr} \ar{d} & & S^{k+l}_R \ar{d} \\
S^k_R \wedge_R (S^{l-1}_R \wedge S^1) \ar{r} & (S^k_R \wedge_R S^{l-1}_R) \wedge S^1 \ar{r} &  S^{k+l-1}_R  \wedge S^1  
\end{tikzcd}
\end{equation}
commute in $\mathscr{D}_R$ up to the indicated sign. 
 We will later compare  the product on usual homotopy groups to a product on relative homotopy groups.  
For this we need that for $k, l \geq 0$ the equivalence $S^k_R \wedge_R S^l_R \simeq S^{k+l}_R$ can be identified with the following map: 
By  \cite[Corollary III.3.7]{EKMM}, \cite[Proposition II.3.6]{LMSt} and  \cite[Proposition II.1.4]{LMSt} there is a natural isomorphism 
\begin{equation}\label{smiso}
\mathbb{F}_R\Sigma^{\infty}X \wedge_R \mathbb{F}_R\Sigma^{\infty}Y \cong \mathbb{F}_R\Sigma^{\infty}(X \wedge Y) 
\end{equation} 
for $X, Y \in \mathscr{T}$. 
 For $k,l \geq 0$ the  equivalence $S^k_R \wedge_R S^l_R \simeq S^{k+l}_R$ is then homotopic to  the map that is given by taking $X = S^k$ and $Y = S^l$ in (\ref{smiso}) and by using the orientation-preserving homeomorphism $S^k \wedge S^l \cong S^{k+l}$.  
Let $R \to R'$ be a morphism of commutative $S$-algebras. 
 Then, for $M \in \mathscr{D}_{R'}$ one has  natural  isomorphisms
\begin{equation*} \label{zwHom}
\mathscr{D}_{R'}(S^n_{R'}, M) \cong \mathscr{D}_R(S^n_R, M)
\end{equation*}
compatible with the suspension isomorphisms.
  One can show that they define a monoidal natural isomorphism between the functors 
\[\pi_*: \mathscr{D}_{R'} \to \mathscr{G}\mathscr{A} \text{~~~~and~~~} \mathscr{D}_{R'} \to \mathscr{D}_{R} \xrightarrow{\pi_*} \mathscr{GA}.\]

Let us denote the category of commutative $S$-algebras by
$\mathscr{C}\mathscr{A}_S$.  Note that the pushout of a diagram $G \leftarrow H \rightarrow J$ in $\mathscr{C}\mathscr{A}_S$ is given by  $G \wedge_H J$. 
By \cite[Chapter VII]{EKMM} the  category  $\mathscr{C}\mathscr{A}_S$  is a $\mathscr{U}$-enriched topological model category.  
We denote the homotopy category of this model category  by $\bar{h}\mathscr{C}\mathscr{A}_S$. The tensor of a commutative $S$-algebra $A$ and a space $X$ is denoted by $A \otimes X$. 
If $R$ is a cofibrant commutative $S$-algebra, the tensor   $R \otimes I$ is a good cylinder object. Since all objects are fibrant, it follows that for commutative $S$-algebras $R$ and $R'$ 
the set $\bar{h}\mathscr{C}\mathscr{A}_S(R,R')$ is the set of homotopy classes
of maps of commutative $S$-algebras $QR \to QR'$, where $QR$ and $QR'$ are cofibrant replacements of $R$ and $R'$  in $\mathscr{C}\mathscr{A}_S$  and  where
homotopies are given  by maps $QR \otimes I \to QR'$. One has a map $R \wedge I_+ \to R  \otimes I$ for every commutative $S$-algebra $R$, so that  maps that are homotopic as maps of commutative $S$-algebras  are homotopic as  maps of $S$-modules.

Let $R$ be a commutative $S$-algebra. We denote the category of commutative $R$-algebras by $\mathscr{C}\mathscr{A}_R$.  As $\mathscr{C}\mathscr{A}_R$ can be identified with the category of commutative $S$-algebras under $R$, it inherits a model category structure from $\mathscr{C}\mathscr{A}_S$.  This model category structure is the same as the one defined in \cite[Chapter VII]{EKMM}, because in both cases weak equivalences are created in $\mathscr{S}$ and fibrations are created in $\mathscr{M}_S$.

 \begin{lem} \label{cof1en}
 Let $A$ be a cofibrant commutative $S$-algebra, let $A \to B$ be a cofibration in $\mathscr{C}\mathscr{A}_S$ and let $A \to C$ be a map in $\mathscr{C}\mathscr{A}_S$, where  $C$ is a cofibrant commutative $S$-algebra. Then the map 
\[ B \wedge_A^L C \to B \wedge_A C\]
in $\mathscr{D}_A$ is an isomorphism of $A$-ring spectra. 
 \end{lem}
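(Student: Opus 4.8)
The plan is to reduce the statement to the single point that $B$, regarded as an $A$-module through the structure map $A \to B$, is cofibrant; granting this, the conclusion follows formally from the properties of $\wedge_A$ and $\wedge_A^L$ recalled above.

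First I would verify that $B$ is a cofibrant commutative $A$-algebra. The category $\mathscr{C}\mathscr{A}_A$ is the category of commutative $S$-algebras under $A$ with model structure created from $\mathscr{C}\mathscr{A}_S$, so cofibrations in $\mathscr{C}\mathscr{A}_A$ are detected in $\mathscr{C}\mathscr{A}_S$; as $A$ is the initial object of $\mathscr{C}\mathscr{A}_A$, the hypothesis that $A \to B$ is a cofibration in $\mathscr{C}\mathscr{A}_S$ says exactly that $B$ is cofibrant in $\mathscr{C}\mathscr{A}_A$. Because $A$ is a cofibrant commutative $S$-algebra, the theory of \cite[Chapter VII]{EKMM} applies to $\mathscr{C}\mathscr{A}_A$; I would invoke from it that the underlying $A$-module of a cofibrant commutative $A$-algebra is cofibrant (that is, a retract of a cell $A$-module). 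Hence $B$ is a cofibrant $A$-module. This is the only step that is not purely formal, and I expect it to be the main obstacle: it requires going into the EKMM construction of the model structure on $\mathscr{C}\mathscr{A}_A$ --- the free commutative $A$-algebra functor, the behaviour of its symmetric powers on cell $A$-modules, and the cofibration hypothesis --- to see that cofibrant commutative $A$-algebras are genuinely cofibrant as $A$-modules and not merely of the homotopy type of cell $A$-modules.

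Next, using that $B$ is a cofibrant $A$-module, I would show the comparison map is a weak equivalence. By the description of $\wedge_A^L$ recalled above it factors in $\mathscr{D}_A$ as
\[ B \wedge_A^L C = \Gamma^A B \wedge_A \Gamma^A C \longrightarrow B \wedge_A \Gamma^A C \longrightarrow B \wedge_A C, \]
where the first map is induced by the cell approximation $\Gamma^A B \to B$ and the second by $\Gamma^A C \to C$. The first map is a weak equivalence because $\Gamma^A C$ is a cofibrant $A$-module and smashing over $A$ with it preserves weak equivalences; the second is a weak equivalence because $B$ is a cofibrant $A$-module. Hence the composite is a weak equivalence, i.e.\ $B \wedge_A^L C \to B \wedge_A C$ is an isomorphism in $\mathscr{D}_A$.

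Finally I would account for the multiplicative structure. Since pushouts in $\mathscr{C}\mathscr{A}_S$ are relative smash products, $B \wedge_A C$ is the pushout of $B \leftarrow A \rightarrow C$ in $\mathscr{C}\mathscr{A}_S$ and in particular is a commutative $A$-algebra, so $B$ and $C$ are $A$-algebras in the sense used above. By the discussion there, for $A$-algebras $M$ and $N$ the canonical map $M \wedge_A^L N \to M \wedge_A N$ in $\mathscr{D}_A$ is a map of $A$-ring spectra; applying this with $M = B$ and $N = C$ and combining with the previous step, $B \wedge_A^L C \to B \wedge_A C$ is a map of $A$-ring spectra which is an isomorphism in $\mathscr{D}_A$, hence an isomorphism of $A$-ring spectra.
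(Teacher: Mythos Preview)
Your argument hinges on the claim that a cofibrant commutative $A$-algebra $B$ has underlying $A$-module which is cofibrant in $\mathscr{M}_A$, i.e.\ a retract of a cell $A$-module. This is not a theorem of \cite{EKMM}, and in fact it fails: the free commutative $A$-algebra on a cell $A$-module $M$ involves the symmetric powers $M^{\wedge_A j}/\Sigma_j$, and the passage to $\Sigma_j$-orbits destroys cell structure. What \cite[Theorem VII.6.5, Theorem VII.6.7]{EKMM} actually prove is the weaker statement that the underlying $A$-module of such a $B$ lies in a class of $A$-modules for which $(-)\wedge_A(-)$ has the correct homotopy type; this suffices for the step $B \wedge_A \Gamma^A C \to B \wedge_A C$, but not via the reasoning you give. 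You yourself flag this step as the main obstacle; following it through would uncover the gap rather than close it.

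The paper organizes the argument so as to avoid the module-level cofibrancy question altogether. It factors $A \to C$ in $\mathscr{C}\mathscr{A}_S$ as a cofibration $A \rightarrowtail \tilde{C}$ followed by an acyclic fibration $\tilde{C} \xrightarrow{\sim} C$. Then $B$ and $\tilde{C}$ are both cofibrant commutative $A$-algebras, so \cite[Theorem VII.6.5, Theorem VII.6.7]{EKMM} give that $\Gamma B \wedge_A \Gamma \tilde{C} \to B \wedge_A \tilde{C}$ is a weak equivalence; and \cite[Theorem VII.7.4]{EKMM} (cobase change along a cofibration of commutative $S$-algebras preserves weak equivalences) gives that $B \wedge_A \tilde{C} \to B \wedge_A C$ is a weak equivalence. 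Your treatment of the ring-spectrum structure, on the other hand, is correct and matches what the paper uses.
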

 \begin{proof}
 We only have to show that the map  $\Gamma B \wedge_ A \Gamma C \to B \wedge_A C$ is a weak equivalence.  
 To see this, factor the map $A \to C$ in $\mathscr{C}\mathscr{A}_S$ as 
 \[\begin{tikzcd}
 A \ar[tail]{r} & \tilde{C} \ar{r}{\sim} & C  
 \end{tikzcd}\]
 and consider the homotopy commutative diagram
 \[\begin{tikzcd}
 \Gamma B \wedge_A \Gamma C \ar{r} & B \wedge_A C \\
 \Gamma B \wedge_A \Gamma \tilde{C}  \ar{u} \ar{r} & B \wedge_A \tilde{C} \ar{u}. 
 \end{tikzcd}\]
 Obviously, the left vertical map is a weak equivalence.  The lower horizontal map is a weak equivalence by \cite[Theorem VII.6.5, Theorem VII.6.7]{EKMM} and the right vertical map is a weak equivalence by  \cite[Theorem VII.7.4]{EKMM}
 \end{proof}

\section{The multiplicativity of the Atiyah-Hirzebruch spectral sequence} \label{AtSec}
 
\subsection{Multiplicative spectral sequences associated to towers of cofibrations} \label{Multsec}
The goal of this subsection is to transfer some of the methods of \cite{Du} to the EKMM-setting. 
We fix a commutative $S$-algebra $R$. 
  
\begin{lem} \label{ss}
Let $\cdots \to X_{n-1} \to X_n \to X_{n+1} \to \cdots$ be a sequence of cofibrations of $R$-modules. 
Then there  is a spectral sequence $E^{*}_{*,*}(X_{\LargerCdot})$ of standard homological type with 
\begin{equation} \label{ssbase}
E^1_{n,m} = \pi_{n+m}(X_n/X_{n-1}).
\end{equation} 
If $X_n = \ast$ for $n < 0$, then the spectral sequence converges strongly: 
\[E^1_{n,m} \Longrightarrow \pi_{n+m}(\colim_i X_i).\]
\end{lem}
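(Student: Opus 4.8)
The plan is to construct the spectral sequence in the standard way from an unrolled exact couple, following the classical recipe (as in \cite{Du}) but carefully adapted so that everything lives in the triangulated category $\mathscr{D}_R$ and uses the suspension isomorphisms and boundary maps fixed in Section~\ref{Presec}. First I would replace each cofiber $X_n/X_{n-1}$ with the mapping cone $C(X_{n-1} \to X_n)$, which is justified because the maps $X_{n-1} \to X_n$ are cofibrations in the sense of \cite{EKMM} and hence satisfy the homotopy extension property, so $C(X_{n-1}\to X_n) \to X_n/X_{n-1}$ is a homotopy equivalence. This lets me use the long exact sequences in homotopy groups with their explicitly defined boundary maps.

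Next I would assemble the exact couple. Set $D_{n,m} = \pi_{n+m}(X_n)$ and $E_{n,m} = \pi_{n+m}(X_n/X_{n-1})$. The long exact sequence of the cofibration $X_{n-1} \to X_n$ gives maps $i\colon D_{n-1,m+1} \to D_{n,m}$ (induced by $X_{n-1}\to X_n$, with a degree bookkeeping shift), $j\colon D_{n,m}\to E_{n,m}$ (induced by $X_n \to X_n/X_{n-1}$), and $k\colon E_{n,m}\to D_{n-1,m}$ (the boundary map $d$). Exactness of this triangle at each spot is exactly the long exact sequence recalled in the excerpt, so $(D,E,i,j,k)$ is an exact couple; the derived couples then produce pages $E^r_{n,m}$ with differentials $d^r\colon E^r_{n,m}\to E^r_{n-r,m+r-1}$ of standard homological type, and $E^1_{n,m} = \pi_{n+m}(X_n/X_{n-1})$ as claimed. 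One should check the bidegree conventions are internally consistent, but this is routine.

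For the convergence statement under the hypothesis $X_n = \ast$ for $n<0$: here the couple is ``unrolled'' and bounded below, so standard results on exact couples (the conditional convergence of a half-plane spectral sequence with entering differentials, which becomes strong convergence once each $E^1_{n,m}$ is bounded below in $n$) apply. Concretely I would argue that $\pi_{n+m}(X_n)$ vanishes for $n<0$ (since $X_n = \ast$), so for fixed total degree $n+m$ the filtration $F_p\pi_{n+m}(\colim_i X_i) = \operatorname{im}(\pi_{n+m}(X_p) \to \pi_{n+m}(\colim_i X_i))$ is finite at the bottom; that $\pi_*(\colim_i X_i) = \colim_i \pi_*(X_i)$ because homotopy groups commute with the sequential (hence filtered) colimit along cofibrations --- this uses that the colimit is a homotopy colimit, so $\pi_*$ commutes with it, or equivalently a compactness/cellular argument that any map from a sphere $S^{n+m}_R$ factors through some $X_i$; and finally that $E^\infty_{n,m} \cong F_n\pi_{n+m}(\colim)/F_{n-1}\pi_{n+m}(\colim)$. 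The exhaustiveness of the filtration is precisely the statement $\pi_*(\colim_i X_i) = \colim_i \pi_*(X_i)$, and the Hausdorff/completeness condition is automatic from boundedness below. Putting these together yields the strong convergence $E^1_{n,m} \Longrightarrow \pi_{n+m}(\colim_i X_i)$.

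The main obstacle I anticipate is not any single deep step but the bookkeeping: pinning down the exact bidegrees and signs so that the differential has the stated form and the comparison maps are genuinely well-defined, and making sure $\pi_*$ commuting with the colimit is properly justified in the EKMM setting (that sequential colimits along cofibrations of $R$-modules are homotopy colimits, so that $\pi_*$ of the colimit is the colimit of the $\pi_*$, e.g.\ via \cite[Chapter I]{EKMM} together with the fact that $S^{n}_R$ is a compact object of $h\mathscr{M}_R$ up to the relevant cellular filtration). None of this is conceptually hard, but it is where care is needed; the actual construction of the spectral sequence from the exact couple is entirely formal once the long exact sequences of the excerpt are in hand.
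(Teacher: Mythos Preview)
Your approach is essentially the same as the paper's: an unrolled exact couple from the long exact sequences of the cofibrations, Boardman's convergence criterion (the paper cites \cite[Theorem 6.1]{Boar}), and a compactness argument via \cite[Proposition III.1.7]{EKMM} to identify $\colim_n \pi_*(X_n)$ with $\pi_*(\colim_n X_n)$. The only substantive point you leave implicit is that the paper deliberately inserts a sign $(-1)^m$ into the boundary map $\partial$ (so $\partial$ is $(-1)^m$ times the composite $\pi_m(X_n/X_{n-1}) \cong \pi_m(C(X_{n-1}\to X_n)) \xrightarrow{d} \pi_{m-1}(X_{n-1})$); this does not affect the existence or convergence of the spectral sequence, but it is needed later (Lemma~\ref{compLES} and the Leibniz rule) for the multiplicativity statements, so you should build it in now rather than treat it as routine bookkeeping.
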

\begin{proof}
We have the following  unrolled exact couple:
\begin{equation*} \label{uecbase}
\begin{tikzcd}
 \cdots \ar{r}{i} & \pi_*(X_{n-1}) \ar{r}{i} \ar{d}{j} & \pi_*(X_n) \ar{d}{j} \ar{r}{i} & \pi_*(X_{n+1}) \ar{d}{j} \ar{r}{i} & \cdots \\
   &  \pi_*(X_{n-1}/X_{n-2}) &  \pi_*(X_n/X_{n-1})  \ar{lu}{\partial} & \pi_*(X_{n+1}/X_n) \ar{lu}{\partial} &  
\end{tikzcd}
\end{equation*}
Here, $\partial$ is defined by $(-1)^m$ times the map 
\[ \pi_m(X_n/X_{n-1}) \cong \pi_{m}(C(X_{n-1} \to X_n)) \xrightarrow{d} \pi_{m-1}(X_{n-1}) .\] 
The reason for the sign will become clear in Lemma \ref{compLES}. This defines the spectral sequence. 
Clearly, if $X_n = \ast$ for $n < 0$, one has $\pi_*(X_n/X_{n-1}) = 0$ for $n < 0$ and $\lim_n \pi_*(X_n)= 0$. So, by \cite[Theorem 6.1]{Boar}  
the spectral sequence converges strongly to $\colim_n \pi_*(X_n)$.

We show that the canonical map 
\[\alpha: \colim_n \pi_m(X_n) \to \pi_m(\colim_n X_n)\]
 is a bijection: Each map $X_{n-1} \to X_n$ is a cofibration of $R$-modules and thus a cofibration of spectra by the retraction of mapping cylinders criterion. By \cite[Lemma I.8.1]{LMSt} each map $X_{n-1} \to X_n$ is a spacewise closed inclusion.
 Thus, by \cite[Proposition III.1.7]{EKMM}, every map $L \to \colim_n X_n$, where $L$ is a compact $R$-module in the sense of
   \cite[Definition III.1.6]{EKMM}, factors through some $X_i$. 
Applying this for $L = S^m_R$ and $L = S^m_R \wedge I_+$ one gets  the surjectivity and injectivity of $\alpha$. 
For the injectivity also  note that  the maps $X_j \to \colim_n X_n$ are cofibrations and thus  monomorphisms in the category of $R$-modules. 
\end{proof}

To deal with multiplicative structures, we follow Dugger's treatment and work with homotopy groups of morphisms. 
For a map $M \to N$ of $R$-modules and $n \geq 1$ we define  $\pi_n(N,M)$ as the set of homotopy classes of diagrams
\begin{equation}  \label{reldiag}
\begin{tikzcd}
\mathbb{F}_R\Sigma^{\infty} S^{n-1} \ar{r}{\beta} \ar{d} & M \ar{d} \\
\mathbb{F}_R \Sigma^{\infty} D^n \ar{r}{\alpha} & N.
\end{tikzcd}
\end{equation}
Recall that the right adjoint of  $\mathbb{F}_R\Sigma^{\infty}(-)$ is given by  the  composition
\[\begin{tikzcd}
\mathscr{M}_R \ar{r} & \mathscr{M}_S \ar{r}{F_{\mathscr{L}}(S,-)} & \mathscr{S}[\mathbb{L}] \ar{r} & \mathscr{S} \ar{r}{\Omega^{\infty}} & \mathscr{T}, 
\end{tikzcd}\]
 where $F_{\mathscr{L}}(-,-)$ is the function $\mathbb{L}$-spectrum. Thus,  we have  a bijection 
\[\pi_n(N,M) \cong \pi_n\bigl(\Omega^{\infty}F_{\mathscr{L}}(S,N),\Omega^{\infty}F_{\mathscr{L}}(S,M)\bigr),\] 
where the right-hand side is the homotopy group of a morphism of based spaces defined in \cite{Du}.  Using \cite{Du} one gets that 
 $\pi_n(N,M)$ has a natural group structure for $n = 2 $ and  a natural abelian group structure for $n \geq 3$,  that $\pi_n(M, *) \cong \pi_n(M)$ and that 
the  sequence 
\begin{equation*}\label{exsrel}
\begin{tikzcd}
\pi_n(M) \ar{r} & \pi_n(N) \ar{r} & \pi_n(N,M) \ar{r}{\kappa} & \pi_{n-1}(M) \ar{r} &  \pi_{n-1}(N)
\end{tikzcd}
\end{equation*}
is exact. 
Here, 
 $\kappa$ is the map that sends the class of the diagram  (\ref{reldiag}) to the class of $\beta$.

\begin{lem} \label{compLES}
Let $M \to N$ be a cofibration of $R$-modules.  Then,
 the diagram 
\[\begin{tikzcd}
\pi_n(M) \ar{r} \ar{d}{\id} & \pi_n(N) \ar{r} \ar{d}{\id} & \pi_n(N,M) \ar{r}{\kappa} \ar{d}{\phi} & \pi_{n-1}(M) \ar{r} \ar{d}{\id} & \pi_{n-1}(M) \ar{d}{\id} \\
\pi_n(M) \ar{r}  & \pi_n(N) \ar{r}  & \pi_n(N/M) \ar{r}{(-1)^n d} & \pi_{n-1}(M) \ar{r} & \pi_{n-1}(M). 
\end{tikzcd}\]
is commutative  and $\phi$ is an isomorphism. 
\end{lem}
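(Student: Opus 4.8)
The plan is to exhibit the natural transformation $\phi$ as one half of a pair of comparison maps between the long exact sequence of the pair $(N,M)$ in the sense of relative homotopy groups and the long exact sequence of the cofiber $N/M$, and then to apply the five lemma together with the hypothesis that $M \to N$ is a cofibration. First I would recall that for a cofibration $M \to N$ the canonical map $C(M \to N) \to N/M$ is a homotopy equivalence; this is exactly the statement made in Section~\ref{Presec}. So it suffices to produce a commutative ladder comparing the sequence of $\pi_*(N,M)$ with the sequence of $\pi_*(C(f))$ for $f\colon M \to N$, the latter being the Puppe sequence whose boundary map $d$ was fixed in the Preliminaries via the homeomorphism $I/\partial I \cong S^1$ and the chosen suspension isomorphism.

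Next I would define $\phi$ explicitly on representatives. Given a diagram of the form~(\ref{reldiag}), i.e.\ $\beta\colon \mathbb{F}_R\Sigma^\infty S^{n-1} \to M$ together with a nullhomotopy of $M \to N$ precomposed with $\beta$ encoded by $\alpha\colon \mathbb{F}_R\Sigma^\infty D^n \to N$, one gets a map out of $C(f) = N \cup_M (M\wedge I)$: on the $N$-summand use $\alpha$ restricted along the basepoint inclusion $D^{n-1}\hookrightarrow D^n$ (or rather use $\alpha$ directly together with the collapse $D^n \to D^n/S^{n-1} \cong S^n$), and on $M\wedge I$ use $\beta$ together with the cone coordinate, matching the two along $M\wedge\{1\}$ and the image of $S^{n-1}$ in $D^n$. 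Concretely, a class in $\pi_n(N,M)$ is the same as a map $S^n_R \cong \mathbb{F}_R\Sigma^\infty(D^n/S^{n-1}) \to N\cup_M C M \simeq C(f)$, and this is the definition of $\phi$; one checks it is a group homomorphism for $n\ge 2$ directly from the corresponding statement in \cite{Du} applied after the adjunction identification $\pi_n(N,M)\cong \pi_n(\Omega^\infty F_{\mathscr L}(S,N),\Omega^\infty F_{\mathscr L}(S,M))$.

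Then I would check commutativity of the ladder square by square. The two left squares (involving $\pi_n(M)$ and $\pi_n(N)$) commute essentially by construction, since on those summands $\phi$ is induced by the evident maps and the identifications $\pi_n(M,*)\cong\pi_n(M)$, $\pi_n(N,*)\cong\pi_n(N)$. The one genuinely delicate square is the right-hand one, comparing $\kappa\colon \pi_n(N,M)\to\pi_{n-1}(M)$ with $(-1)^n d\colon \pi_n(N/M)\to \pi_{n-1}(M)$: here I would unwind $\kappa$, which sends the diagram~(\ref{reldiag}) to the class of $\beta\colon \mathbb{F}_R\Sigma^\infty S^{n-1}\to M$, and compare with the boundary $d$ of the Puppe sequence, which is built from the pinch map $C(f)\to \Sigma M$ and the suspension isomorphism. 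The sign $(-1)^n$ is forced by the orientation conventions of Remark~\ref{or}: the boundary orientation on $S^{n-1} = \partial D^n$ and the fixed homeomorphisms $D^n/S^{n-1}\cong S^n$, $I/\partial I\cong S^1$, $S^{n-1}\wedge S^1\cong S^n$ differ from the ``obvious'' identification by precisely a sign that tracks as $(-1)^n$ when one passes from the cone coordinate in $C(f)$ to the last sphere coordinate; this is the same bookkeeping Dugger performs in \cite{Du}, and it is exactly the sign that was anticipated in the definition of $\partial$ in Lemma~\ref{ss}. I expect this sign verification to be the main obstacle — not conceptually hard, but requiring care to pin down the homeomorphisms and their induced boundary orientations so that everything is consistent with the conventions fixed earlier.

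Finally, once the ladder commutes and the two outer pairs of vertical maps are identities (hence isomorphisms), the five lemma shows $\phi$ is an isomorphism, which completes the proof. I would also remark that naturality of $\phi$ in the cofibration $M\to N$ follows from the naturality of all the maps involved, although this is not strictly required for the statement as phrased.
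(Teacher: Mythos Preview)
Your overall strategy matches the paper's: build the comparison ladder, verify the sign in the third square via the orientation conventions of Remark~\ref{or}, and conclude with the five lemma. The paper streamlines the sign check by invoking naturality to reduce to the universal element, namely the identity square on $\mathbb{F}_R\Sigma^\infty S^{n-1}\to\mathbb{F}_R\Sigma^\infty D^n$, so that one only has to verify a single explicit diagram of sphere spectra commutes up to $(-1)^n$; this comes down to applying the boundary formula~(\ref{randformel}) to $D^n\times I$. Your direct approach would also work, but the reduction avoids carrying $M$ and $N$ through the computation.

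There is, however, a genuine gap at $n=1$. You note that $\phi$ is a group homomorphism for $n\ge 2$ and then apply the five lemma without qualification, but according to the conventions set up just before the lemma, $\pi_1(N,M)$ is only a pointed set (group structure starts at $n=2$, abelian at $n\ge 3$). The five lemma does not apply as stated in that range. The paper handles this by exploiting stability: one identifies $\pi_1(N,M)$ with homotopy classes of squares
\[
\begin{tikzcd}
S^{-2}_R\wedge S^2 \ar{r}\ar{d} & M \ar{d}\\
(S^{-2}_R\wedge I)\wedge S^2 \ar{r} & N,
\end{tikzcd}
\]
using the comultiplication on $S^2$ to endow $\pi_1(N,M)$ with an abelian group structure, and then checks that $\pi_1(N)\to\pi_1(N,M)$, $\kappa$, and $\phi$ are homomorphisms for this structure. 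With that in hand the five lemma goes through for $n=1$ as well. You should add this step (or an equivalent desuspension argument) to close the gap.
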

\begin{proof}
For the commutativity of the third square note that by naturality we only have to consider  the diagram 
\[\begin{tikzcd}
\mathbb{F}_R \Sigma^{\infty} S^{n-1} \ar{r}{\id} \ar{d} & \mathbb{F}_R \Sigma^{\infty} S^{n-1} \ar{d} \\
\mathbb{F}_R \Sigma^{\infty} D^n \ar{r}{\id} & \mathbb{F}_R \Sigma^{\infty} D^n.
\end{tikzcd}  \]
For the commutativity on this element one has to show that 
\[\begin{tikzcd}
C(\mathbb{F}_R\Sigma^{\infty}S^{n-1} \to \mathbb{F}_R \Sigma^{\infty} D^n) \ar{r} \ar{d}[swap]{\simeq} &   \mathbb{F}_R\Sigma^{\infty} S^{n-1} \wedge S^1 \ar{d} \\
\mathbb{F}_R D^n/{\mathbb{F}_R \Sigma^{\infty}S^{n-1}} \cong \mathbb{F}_R\Sigma^{\infty}D^n/{S^{n-1}} \ar{d}[swap]{\cong} & \mathbb{F}_R \Sigma^{\infty} (S^{n-1} \wedge S^1) \ar{d} \\
\mathbb{F}_R\Sigma^{\infty} S^n \ar{r}{(-1)^n \id} & \mathbb{F}_R \Sigma^{\infty}S^n 
\end{tikzcd}
\]
is commutative in the homotopy category of $R$-modules. Roughly speaking, this  follows by applying the formula (\ref{randformel})
for the boundary orientation of a product to $D^n \times I$. 
 By the five lemma $\phi$ is an isomorphism for $n \geq 2$.  For $n = 1$ note that we can endow $\pi_1(N,M)$ with an abelian group structure by identifying it with the set of 
homotopy classes of commutative diagrams
\[\begin{tikzcd}
S^{-2}_R \wedge S^2 \ar{d} \ar{r}  & M \ar{d} & \\
(S^{-2}_R \wedge I) \wedge S^2 \ar{r} & N
\end{tikzcd}\]
and by using the comultiplication of $S^2$. 
One easily sees that $\pi_1(N) \to \pi_1(N,M)$, $\kappa$ and $\phi$ are group homomorphism with respect to this group structure. 
\end{proof}


Following Dugger we now consider products on $\pi_*(-,-)$:  
Let $D^{n+m} \to D^n \times D^m$ be  an orientation-preserving, pointed homeomorphism. It induces an orientation-preserving homeomorphism of the boundaries 
\[S^{n+m-1}  \xrightarrow{\cong} (S^{n-1} \times D^m)\, \amalg_{(S^{n-1} \times S^{m-1})} \, (D^n \times S^{m-1})\]
 and we get the following commutative diagram:
\begin{equation} \label{pr1}
\begin{tikzcd}
S^{n+m-1} \ar{r} \ar{d} & (S^{n-1} \wedge D^m)\, \amalg_{(S^{n-1} \wedge S^{m-1})} \, (D^n \wedge S^{m-1}) \ar{d}  \\
D^{n+m-1} \ar{r} & D^n \wedge D^m.
\end{tikzcd}
\end{equation}
Given two maps  of $R$-modules $M \to N$ and $O \to L$
 we define  a product 
\[(-) \cdot (-): \pi_n(N,M) \times \pi_m(L, O) \to \pi_{n+m}\bigl(N \wedge_R L, (M \wedge_R L) \, \amalg_{(M \wedge_R O)} \, (N \wedge_R O) \bigr)\]
by applying $\mathbb{F}_R\Sigma^{\infty}(-)$ to (\ref{pr1}) and by using the isomorphism (\ref{smiso}).

\begin{lem} \label{prodcomp2}
Let $M \to N$ and $O \to L$ be maps of $R$-modules. Then the following diagram commutes: 
\begin{equation*} \label{compprodmod}
\begin{tikzcd} 
\pi_k(N,M) \times \pi_l(L,O) \ar{d} \ar{r} & \pi_{k+l}\bigl(N \wedge_R L, (M \wedge_R L) \, \amalg_{(M \wedge_R O)} \,(N \wedge_R O)\bigr) \ar{d} \\
\pi_k(N/M) \times \pi_l(L/O) \ar{rd} & \pi_{k+l}\bigl(N \wedge_R L/{(M \wedge_R L \, \amalg_{M \wedge_R O} \,N \wedge_R O)}\bigr)\ar{d} \\ 
& \pi_{k+l}(N/M \wedge_R L/O). 
\end{tikzcd}
\end{equation*}
\end{lem}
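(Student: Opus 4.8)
The plan is to reduce the statement, by naturality, to the universal case in which the two pairs are built from disks and spheres, and then to identify both composites with the standard equivalence $S^{k+l}_R\simeq S^k_R\wedge_R S^l_R$; this last identification will come down to comparing two orientation-preserving homeomorphisms of a sphere, so that no sign can appear.

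First I would observe that every map in the diagram is natural in the pairs $M\to N$ and $O\to L$: for the product $(-)\cdot(-)$ this follows from its construction via (\ref{pr1}) together with the naturality of (\ref{smiso}); the maps $\pi_*(N,M)\to\pi_*(N/M)$ are natural by construction; the external product on $\pi_*(-)$ is natural because it is part of the lax symmetric monoidal structure on $\pi_*$; and the isomorphism $(N\wedge_R L)/\bigl((M\wedge_R L)\amalg_{(M\wedge_R O)}(N\wedge_R O)\bigr)\cong (N/M)\wedge_R(L/O)$ is natural by inspection. Since a class of $\pi_k(N,M)$ is by definition represented by a diagram of the form (\ref{reldiag}), it is the image under a morphism of pairs of the class $\iota^{(k)}$ of the identity diagram on $\mathbb{F}_R\Sigma^{\infty}S^{k-1}\to\mathbb{F}_R\Sigma^{\infty}D^k$, and similarly in the second variable. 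Hence, by naturality, it suffices to check the commutativity on the pair $(\iota^{(k)},\iota^{(l)})$, i.e. when $M\to N$ is $\mathbb{F}_R\Sigma^{\infty}S^{k-1}\to\mathbb{F}_R\Sigma^{\infty}D^k$ and $O\to L$ is $\mathbb{F}_R\Sigma^{\infty}S^{l-1}\to\mathbb{F}_R\Sigma^{\infty}D^l$ (recall $k,l\geq1$, so $k+l\geq0$).

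In this universal case I would use the fixed homeomorphisms $D^k/S^{k-1}\cong S^k$ to identify $N/M$ with $S^k_R$ and $L/O$ with $S^l_R$; under this identification the image of $\iota^{(k)}$ in $\pi_k(N/M)$ is the class of $\id_{S^k_R}$. Going down and then along the diagonal then produces the external product $[\id_{S^k_R}]\cdot[\id_{S^l_R}]$, which by the definition of the product on $\pi_*(-)$ recalled in Section \ref{Presec} is the class of the equivalence $S^{k+l}_R\simeq S^k_R\wedge_R S^l_R$. For the other composite I would apply $\mathbb{F}_R\Sigma^{\infty}(-)$ to (\ref{pr1}), use (\ref{smiso}) to rewrite the smash products, pass to the quotient, and use the same identifications; this carries $\iota^{(k)}\cdot\iota^{(l)}$ to the class of the map $S^{k+l}_R\to S^k_R\wedge_R S^l_R$ obtained by applying $\mathbb{F}_R\Sigma^{\infty}(-)$ to the based homeomorphism $S^{k+l}\to S^k\wedge S^l$ induced by the chosen orientation-preserving homeomorphism $D^{k+l}\cong D^k\times D^l$ and the fixed homeomorphisms $D^n/S^{n-1}\cong S^n$, and then composing with the inverse of (\ref{smiso}). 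By the explicit description in Section \ref{Presec} of the equivalence $S^k_R\wedge_R S^l_R\simeq S^{k+l}_R$ for $k,l\geq0$, the first composite is obtained in the same way from the inverse of the fixed homeomorphism $S^k\wedge S^l\cong S^{k+l}$, so that what remains is to compare these two based homeomorphisms $S^{k+l}\to S^k\wedge S^l$.

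To finish I would invoke Remark \ref{or}: the orientation on $(D^k/S^{k-1})\wedge(D^l/S^{l-1})$ away from the basepoint is the product of the orientations on $\mathrm{int}(D^k)$ and $\mathrm{int}(D^l)$, and hence agrees, via the orientation-preserving homeomorphism $D^{k+l}\cong D^k\times D^l$, with the orientation on $\mathrm{int}(D^{k+l})$; together with the fact that the fixed homeomorphisms $D^n/S^{n-1}\cong S^n$ and $S^k\wedge S^l\cong S^{k+l}$ are orientation-preserving, this shows that both maps in question are orientation-preserving. Composing one with the inverse of the other yields an orientation-preserving self-homeomorphism of $S^{k+l}$, of degree $+1$, hence homotopic to the identity, so the two maps are homotopic. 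Since $\mathbb{F}_R\Sigma^{\infty}(-)$ is the left adjoint of an adjunction of enriched functors and therefore preserves homotopies, and since both composites are obtained from these maps by applying $\mathbb{F}_R\Sigma^{\infty}(-)$ and post-composing with the inverse of (\ref{smiso}), they agree in $h\mathscr{M}_R$, which gives the commutativity. The hard part is purely bookkeeping: organizing the fixed homeomorphisms, the isomorphism (\ref{smiso}), and the isomorphism $(N\wedge_R L)/(\cdots)\cong (N/M)\wedge_R(L/O)$ precisely enough that the sign-free comparison over spaces genuinely descends to $h\mathscr{M}_R$ — in particular, checking that the identification of $\pi_*$ of the relevant point-set quotients with $\pi_*$ of the sphere modules is exactly the one used to define the product on $\pi_*(-)$.
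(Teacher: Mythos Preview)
Your proposal is correct and follows essentially the same approach as the paper: both reduce by naturality to the universal case of the disk/sphere pairs, and then observe that the two composites amount to applying $\mathbb{F}_R\Sigma^{\infty}$ to two pointed, orientation-preserving homeomorphisms $S^{k+l}\to D^k/S^{k-1}\wedge D^l/S^{l-1}$, which are necessarily homotopic. The only cosmetic difference is that the paper packages the passage to spaces via the comparison maps (\ref{spspcomp1}) and (\ref{spspcomp2}), whereas you unwind this directly.
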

\begin{proof}
Because of naturality  we only have to prove commutativity in
the universal case. 
 This can be reduced to the corresponding statement for  spaces by using that we 
 have natural maps 
 \begin{equation} \label{spspcomp1}
 \pi_*(B, A) \to  \pi_*(\mathbb{F}_R\Sigma^{\infty}B,\mathbb{F}_R\Sigma^{\infty}A) 
 \end{equation}
 and 
 \begin{equation} \label{spspcomp2}
    \pi_*(B) \to \pi_*(\mathbb{F}_R\Sigma^{\infty}B) 
    \end{equation}
that identify for $A = \ast$ and that are compatible with products.  Here, the product on the left-hand side of (\ref{spspcomp2}) is defined by means of the fixed orientation-preserving homeomorphisms $S^k \wedge S^l \cong S^{k+l}$. 
 The statement for spaces follows from the fact that two pointed, orientation-preserving homeomorphisms $S^{k+l} \to D^k/{S^{k-1}} \wedge D^l/{S^{l-1}}$ are homotopic. 
\end{proof}
\begin{lem} \label{Leibniz}
Let $M \to N$ and $O \to L$ be morphisms of $R$-modules. 
Let $j_*$, $(i_1)_*$ and $(i_2)_*$ be the maps 
\begin{eqnarray*}
\pi_*\bigl((M \wedge_R L) \, \amalg_{(M \wedge_R O)} \, (N \wedge_R O) \bigr) & \to & \pi_{*}\bigl((M \wedge_R L) \, \amalg_{(M \wedge_R O)} \,(N \wedge_R O), M \wedge_R O\bigr) \\
\pi_*\bigl(M \wedge_R L, M \wedge_R O \bigr) & \to & \pi_{*}\bigl((M \wedge_R L) \, \amalg_{(M \wedge_R O)} \,(N \wedge_R O), M \wedge_R O\bigr) \\
\pi_*\bigl( N \wedge_R O, M \wedge_R O \bigr) & \to & \pi_{*}\bigl((M \wedge_R L) \, \amalg_{(M \wedge_R O)} \,(N \wedge_R O), M \wedge_R O\bigr)
\end{eqnarray*}
respectively. Let 
$x \in \pi_k(N,M)$ and $y \in \pi_l(L,O)$ with $k,l \geq 3$.
Then, we have
\[j_*\kappa(x \cdot y) = (i_1)_*(\kappa x \cdot y) + (-1)^k (i_2)_*(x \cdot \kappa y) \]
in $\pi_{k+l-1}\bigl((M \wedge_R L) \, \amalg_{(M \wedge_R O)} \,(N \wedge_R O), M \wedge_R O\bigr)$.  
\end{lem}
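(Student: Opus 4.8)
The plan is to follow the scheme of the proof of Lemma~\ref{prodcomp2}: reduce by naturality to a universal situation and then transport the identity to based spaces, where it becomes a statement about the boundary orientation of a product of disks. The products, the connecting maps $\kappa$, and the maps $j_*$, $(i_1)_*$, $(i_2)_*$ are all natural in the pair of morphisms $(M\to N,\ O\to L)$, so it suffices to treat the universal case $M=\mathbb{F}_R\Sigma^{\infty}S^{k-1}$, $N=\mathbb{F}_R\Sigma^{\infty}D^{k}$, $O=\mathbb{F}_R\Sigma^{\infty}S^{l-1}$, $L=\mathbb{F}_R\Sigma^{\infty}D^{l}$, with $x$ and $y$ the tautological classes (so that $\kappa x$ and $\kappa y$ are the classes of the identity maps): a chosen representative of an arbitrary $x\in\pi_k(N',M')$ is precisely a map of pairs from the universal pair to $(N',M')$, and likewise for $y$, so the general case follows by functoriality. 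In this universal case, since $\mathbb{F}_R\Sigma^{\infty}(-)$ is a left adjoint (hence preserves pushouts) and satisfies (\ref{smiso}), each of $M\wedge_R L$, $N\wedge_R O$, $M\wedge_R O$ and $P\coloneqq(M\wedge_R L)\amalg_{(M\wedge_R O)}(N\wedge_R O)$ is $\mathbb{F}_R\Sigma^{\infty}(-)$ applied to the corresponding based space.

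Next I would invoke the natural maps (\ref{spspcomp1}) and (\ref{spspcomp2}), which are compatible with products and, as one reads off their construction, also with the connecting maps $\kappa$ (and $\partial$) and with $j_*$, $(i_1)_*$, $(i_2)_*$ (the latter being induced by maps of spaces). Applying them to the tautological classes of the pairs of spaces $(D^{k},S^{k-1})$ and $(D^{l},S^{l-1})$ reduces the lemma to the analogous Leibniz identity for relative homotopy groups of based spaces. That space-level identity one proves along the lines of \cite{Du}: the product is modelled on a pointed orientation-preserving homeomorphism $D^{k+l}\cong D^{k}\times D^{l}$, whose restriction to boundaries is, by Remark~\ref{or} and formula (\ref{randformel}), the decomposition of $\partial(D^{k}\times D^{l})$ into the two pieces $S^{k-1}\times D^{l}$ and $(-1)^{k}\,D^{k}\times S^{l-1}$ glued along $S^{k-1}\times S^{l-1}$; the class $\kappa(x\cdot y)$ is represented by the restriction of the product map to this boundary sphere, and pushing it into the relative group modulo $S^{k-1}\wedge S^{l-1}$ splits it, by additivity of relative homotopy classes along the gluing locus $S^{k-1}\times S^{l-1}$, into the sum of the contributions of the two hemispheres; unwinding the definitions of the two products identifies these contributions with $(i_1)_*(\kappa x\cdot y)$ and $(-1)^{k}(i_2)_*(x\cdot\kappa y)$ respectively.

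The step I expect to be the main obstacle is this last piece of sign and group-structure bookkeeping: one has to check that the decomposition of $\partial(D^{k}\times D^{l})$ along $S^{k-1}\times S^{l-1}$ really does realize the abelian-group addition in $\pi_{k+l-1}$ of the relevant pair, in the conventions fixed in \cite{Du} and Remark~\ref{or}, and that the sign it produces is exactly $(-1)^{k}$. The hypothesis $k,l\geq 3$ enters precisely here: it guarantees that all homotopy groups occurring in the identity — in particular $\pi_{k+l-1}$ of the pair $\bigl(P,\,M\wedge_R O\bigr)$ and $\pi_{k-1}(M)$ — are abelian, so that the right-hand side is well defined and independent of the auxiliary choices entering the group structures on relative homotopy groups.
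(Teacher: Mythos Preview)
Your proposal is correct and follows essentially the same approach as the paper: reduce by naturality to the universal case, pass to spaces via (\ref{spspcomp1}) and (\ref{spspcomp2}), and invoke the space-level Leibniz rule, which is \cite[Proposition 4.1]{Du} and amounts to applying formula (\ref{randformel}) to $D^k\times D^l$. The paper simply cites Dugger for the last step, whereas you have unpacked the argument; your identification of where the hypothesis $k,l\geq 3$ enters is also on the mark.
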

\begin{proof}
By naturality we only have to check the equality in the universal case. 
By using the maps (\ref{spspcomp1}) and (\ref{spspcomp2}) we can reduce the statement to the corresponding statement for spaces. 
The statement for spaces is proven in \cite[Proposition 4.1]{Du}. Roughly speaking, it follows by applying the formula   (\ref{randformel})  for the boundary orientation of a product to   $D^k \times D^l$. 
\end{proof}

We now study the multiplicative properties of the spectral sequence (\ref{ssbase}). We assume that we have towers of cofibrations of $R$-modules $X_{\LargerCdot}$, $Y_{\LargerCdot}$ and $W_{\LargerCdot}$.
 Furthermore, we assume that we have a pairing of towers $X_{\LargerCdot} \wedge_R Y_{\LargerCdot} \to W_{\LargerCdot}$ by which we mean 
 maps $X_n \wedge_R Y_k \to W_{n+k}$ such that the following diagrams commute in $\mathscr{M}_R$:
 \[ \begin{tikzcd}
     X_{n-1} \wedge_R Y_k \ar{d} \ar{r} & X_n \wedge_R Y_k     \ar{d} & X_n \wedge_R Y_{k-1} \ar{l} \ar{d} \\
     W_{n+k-1} \ar{r} & W_{n+k} & W_{n+k-1} \ar{l}.   
\end{tikzcd}\]
We get unique maps $X_n/ X_{n-1} \wedge_R Y_k/ Y_{k-1} \to W_{n+k}/W_{n+k-1}$ making the diagrams 
\begin{equation*} 
\begin{tikzcd}
X_n \wedge_R Y_k \ar{r} \ar{d} & W_{n+k}  \ar{d} \\
X_n/X_{n-1} \wedge_R Y_k/Y_{k-1} \ar{r} & W_{n+k}/W_{n+k-1} 
\end{tikzcd}
\end{equation*}
commutative. Let $E^*_{*,*}(X_{\LargerCdot})$, $E^*_{*,*}(Y_{\LargerCdot})$ and $E^*_{*,*}(W_{\LargerCdot})$  be the spectral sequences associated to 
the towers (see Lemma \ref{ss}). 
We get  maps 
\[\star: E^1_{n,m}(X_{\LargerCdot}) \otimes_{\mathbb{Z}} E^1_{k,l}(Y_{\LargerCdot}) \to E^1 _{n+k, m+l}(W_{\LargerCdot}).\]

In the following lemma we prove the compatibility of the product with the differentials for classes in  total degrees $\geq 3$. 
\begin{lem} \label{ss2}
Let $x \in E^1_{n,m}(X_{\LargerCdot}) \cap Z^r_{n,m}$ and $y \in E^1_{k,l}(Y_{\LargerCdot}) \cap Z^r_{k,l}$ be $r$-th cycles with $n+m \geq 3$ and $k+l \geq 3$.  Then, there are classes $\bar{x}  \in \pi_{n+m-1}(X_{n-r})$, 
$\bar{y} \in \pi_{k+l-1}(Y_{k-r})$ and $\bar{w} \in \pi_{n+m+k+l-1}(W_{n+k-r})$
such that the following holds:
\begin{itemize}
\item Under the maps $\pi_{n+m-1}(X_{n-r}) \to \pi_{n+m-1}(X_{n-1})$, $\pi_{k+l-1}(Y_{k-r}) \to \pi_{k+l-1}(Y_{k-1})$ and $
  \pi_{n+m+k+l-1}(W_{n+k-r}) \to \pi_{n+m+k+l-1}(W_{n+k-1}) $  
the classes $\bar{x}$, $\bar{y}$ and $\bar{w}$ get mapped to $\partial(x)$, $\partial(y)$ and $\partial(x \star y)$. 
\item Let $\tilde{x}$, $\tilde{y}$ and $\tilde{w}$ be the images of the classes $\bar{x}$, $\bar{y}$ and $\bar{w}$ under the maps
     \begin{align*}
     \pi_{n+m-1}(X_{n-r}) & \to \pi_{n+m-1}(X_{n-r}/X_{n-r-1}), \\ 
     \pi_{k+l-1}(Y_{k-r}) & \to \pi_{k+l-1}(Y_{k-r}/Y_{k-r-1}), \\
       \pi_{n+m+k+l-1}(W_{n+k-r}) & \to  \pi_{n+m+k+l-1}(W_{n+k-r}/W_{n+k-r-1}).
          \end{align*}
      Then the following equation holds:
      \[\tilde{w} = \tilde{x} \star y + (-1)^{n+m} x \star \tilde{y}.\] 
\end{itemize} 
\end{lem}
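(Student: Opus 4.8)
The plan is to transfer Dugger's argument from \cite{Du}: lift the $r$-cycles $x$ and $y$ to relative homotopy classes, form their external product, push it forward along the pairing of towers, and extract the formula from the Leibniz rule of Lemma~\ref{Leibniz} together with the compatibilities of Lemmas~\ref{compLES} and~\ref{prodcomp2}. Recall first the standard description of $r$-cycles: $x \in E^1_{n,m}(X_{\LargerCdot}) = \pi_{n+m}(X_n/X_{n-1})$ lies in $Z^r_{n,m}$ precisely when $\partial(x) \in \pi_{n+m-1}(X_{n-1})$ is in the image of $i^{r-1}\colon \pi_{n+m-1}(X_{n-r}) \to \pi_{n+m-1}(X_{n-1})$, and the image in $\pi_{n+m-1}(X_{n-r}/X_{n-r-1})$ of any such preimage represents $d^r(x)$ in $E^r$. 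Since $X_{n-r}\to X_n$ is a cofibration (a composite of cofibrations) and $\partial(x)$ maps to $0$ in $\pi_{n+m-1}(X_n)$, such a preimage can be realized as $\kappa(a)$ for a class $a \in \pi_{n+m}(X_n, X_{n-r})$; after modifying $a$ by the image of a suitable class of $\pi_{n+m}(X_n)$ (which alters neither $\kappa(a)$ nor $d^r(x)$) we may arrange that $a$ maps to $x$ under $\pi_{n+m}(X_n, X_{n-r}) \to \pi_{n+m}(X_n, X_{n-1}) \xrightarrow{\phi} \pi_{n+m}(X_n/X_{n-1})$. I would fix such $a$ and, analogously, $b \in \pi_{k+l}(Y_k, Y_{k-r})$, and set $\bar x := \kappa(a)$, $\bar y := \kappa(b)$, with $\tilde x$, $\tilde y$ their images in the relevant subquotient $E^1$-groups.

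Next I would form the external product $a \cdot b \in \pi_{n+m+k+l}(X_n \wedge_R Y_k, P)$ with $P := (X_{n-r}\wedge_R Y_k)\amalg_{(X_{n-r}\wedge_R Y_{k-r})}(X_n \wedge_R Y_{k-r})$, using the product defined just before Lemma~\ref{prodcomp2}. The commuting squares defining a pairing of towers show that $(X_n\wedge_R Y_k, P)$ maps to $(W_{n+k}, W_{n+k-r})$: the two pieces of $P$ map to $W_{n+k-r}$ and agree on $X_{n-r}\wedge_R Y_{k-r}$. Let $c \in \pi_{n+m+k+l}(W_{n+k}, W_{n+k-r})$ be the image of $a\cdot b$, and put $\bar w := \kappa(c)$ and $\tilde w :=$ its image in $\pi_{n+m+k+l-1}(W_{n+k-r}/W_{n+k-r-1})$. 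The first bullet for $\bar x$, $\bar y$ is the $r$-cycle recollection above; for $\bar w$ it follows by combining the same fact for the tower $W_{\LargerCdot}$ with the claim that $c$ maps to $x \star y$ in $\pi_{n+m+k+l}(W_{n+k}/W_{n+k-1})$ — which in turn follows from Lemma~\ref{prodcomp2}, naturality of the external product in the subspaces, and commutativity of the pairing squares, using that $a$, $b$ reduce to $x$, $y$.

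For the second bullet, which is the heart of the statement, I would apply Lemma~\ref{Leibniz} to $a$ and $b$; the hypotheses $n+m \ge 3$ and $k+l \ge 3$ are exactly what is needed, and in $\pi_{n+m+k+l-1}(P, X_{n-r}\wedge_R Y_{k-r})$ it yields
\[ j_*\kappa(a\cdot b) = (i_1)_*(\kappa a \cdot b) + (-1)^{n+m}(i_2)_*(a \cdot \kappa b). \]
Pushing this identity forward along the pairing into $\pi_{n+m+k+l-1}(W_{n+k-r}, W_{n+k-2r})$ and then into $\pi_{n+m+k+l-1}(W_{n+k-r}/W_{n+k-r-1})$, the left-hand side becomes $\tilde w$ by construction (and Lemma~\ref{compLES}). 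For the two summands I would invoke Lemma~\ref{prodcomp2} for the cofibrations $X_{n-r-1}\to X_{n-r}$, $Y_{k-1}\to Y_k$ (respectively $X_{n-1}\to X_n$, $Y_{k-r-1}\to Y_{k-r}$): since $\bar x = \kappa a$ reduces to $\tilde x$ in $\pi_{n+m-1}(X_{n-r}/X_{n-r-1})$ and $b$ reduces to $y$, the first summand becomes $\tilde x \star y$, and symmetrically the second becomes $x \star \tilde y$, giving $\tilde w = \tilde x \star y + (-1)^{n+m} x \star \tilde y$.

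The main difficulty I anticipate is the sign bookkeeping: signs enter from the boundary-orientation convention~(\ref{randformel}), from the isomorphism $\phi$ of Lemma~\ref{compLES} comparing $\kappa$ with the topological boundary map $d$, from the $(-1)^m$ built into $\partial$ in Lemma~\ref{ss}, and from the graded commutativity of the sphere equivalences. One must check that these combine so that exactly $(-1)^{n+m}$ survives in the final formula — in particular that the uniform sign produced by passing from relative to quotient homotopy groups via $\phi$ cancels from all three terms — and that the sign ambiguities in the first bullet can be consistently absorbed into the choices of $\bar x$, $\bar y$, $\bar w$. A secondary, more routine point is verifying that the external product on relative homotopy groups, the pushout–quotient identifications, and push-forward along the pairing of towers are mutually compatible; as elsewhere, this reduces via the maps~(\ref{spspcomp1}) and~(\ref{spspcomp2}) to the corresponding space-level statements of~\cite{Du}.
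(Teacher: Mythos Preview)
Your proposal is correct and follows essentially the same approach as the paper: lift $x$ and $y$ to relative classes $x'\in\pi_{n+m}(X_n,X_{n-r})$, $y'\in\pi_{k+l}(Y_k,Y_{k-r})$, set $\bar x=\kappa(x')$, $\bar y=\kappa(y')$, push the external product $x'\cdot y'$ along the pairing into $(W_{n+k},W_{n+k-r})$, take $\bar w=\kappa$ of the result, and read off both bullets from Lemmas~\ref{compLES}, \ref{prodcomp2}, and~\ref{Leibniz}. The only difference is that the paper constructs the lifts $x',y'$ directly from $\phi^{-1}(x),\phi^{-1}(y)$ using the homotopy extension property of $\mathbb{F}_R\Sigma^\infty S^{s-1}\to\mathbb{F}_R\Sigma^\infty D^s$, whereas you go the other way around (first lift $\partial(x)$, then adjust by a class from $\pi_{n+m}(X_n)$); both are fine.
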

\begin{proof}
 Using Lemma \ref{compLES} and the fact that the maps $\mathbb{F}_R\Sigma^\infty S^{s-1} \to  \mathbb{F}_R\Sigma^\infty D^s$ satisfy the homotopy extension property, one gets  classes  $x' \in \pi_{n+m}(X_n, X_{n-r})$ and $y' \in \pi_{k+l}(Y_k , Y_{k-r})$ 
   that are mapped to $x$ and $y$ under 
   \begin{align*}
    \pi_{n+m}(X_n, X_{n-r}) \to    \pi_{n+m}(X_n, X_{n-1}) \xrightarrow{\phi}    \pi_{n+m}(X_n/X_{n-1})
   \end{align*}
   and 
   \begin{align*}
 \pi_{k+l}(Y_k, Y_{k-r}) \to    \pi_{k+l}(Y_k, Y_{k-1}) \xrightarrow{\phi}    \pi_{k+l}(Y_k/Y_{k-1}). 
 \end{align*}
We define $\bar{x} \coloneqq \kappa(x') \in \pi_{n+m-1}(X_{n-r})$ and $\bar{y} \coloneqq \kappa(y') \in \pi_{k+l-1}(Y_{k-r})$. 
By Lemma \ref{prodcomp2} the element $x \star y$ is the image of $ x' \cdot y'$ under 
\begin{align*}
  \pi_{n+m+k+l}(X_n \wedge_R Y_k, P') & \to  \pi_{n+m+k+l}(W_{n+k}, W_{n+k-r}) \\
& \to \pi_{n+m+k+l}(W_{n+k}, W_{n+k-1}) \to \pi_{n+m+k+l}(W_{n+k}/ W_{n+k-1})
 \end{align*}
 where $P':= (X_{n-r} \wedge_R Y_k) \, \amalg_{(X_{n-r} \wedge_R Y_{k-r})} \, (X_n \wedge_R Y_{k-r})$. 
We define $\bar{w}$ to be the image of $x' \cdot y'$ under 
\[ 
\pi_{n+m+k+l}(X_n \wedge_R Y_k, P') \to   \pi_{n+m+k+l}(W_{n+k}, W_{n+k-r}) \xrightarrow{\kappa} \pi_{n+m+k+l-1}(W_{n+k-r}).
\]
The claim then follows from Lemma \ref{compLES}, \ref{prodcomp2} and \ref{Leibniz}. 
\end{proof}

In order to show that Lemma \ref{ss2} holds for classes in arbitrary total degree, we next prove that  suspension isomorphisms give  isomorphisms of spectral sequences. 
Let $M$ be an $R$-module,  and let $X$  and $Y$ be pointed spaces. Analogously to $M \wedge X$,  we define $R$-modules $X \wedge M$ and $X \wedge M \wedge Y$ by applying the spectrification functor \cite[App.]{LMSt} to the prespectra given by
$(X \wedge M)(V) = X \wedge M(V)$ and
$(X \wedge M \wedge Y)(V) = X \wedge M(V) \wedge Y$. 
Recall  that we fixed isomorphisms 
\begin{equation*}
S^{m+1}_R \cong S^m_R \wedge S^1.
\end{equation*}
For $m \in \mathbb{Z}$ we define  $ S^{m+1}_R \cong S^1 \wedge S^m_R$ as the composition
\[
 S^{m+1}_R \cong S^m_R \wedge S^1 \cong S^1 \wedge S^m_R,
 \]
and we define  $S^{m+2}_R \cong S^1 \wedge S^m_R \wedge S^1$ as the composition
\[
 S^{m+2}_R \cong S^{m+1}_R \wedge S^1  \cong (S^1 \wedge S^m_R) \wedge S^1 \cong S^1 \wedge  S^m_R \wedge S^1.
 \]
These isomorphisms define group isomorphisms
\begin{align*}
\sigma_r: \pi_m(M) & \to \pi_{m+1}(M \wedge S^1), \\
\sigma_l: \pi_m(M) & \to \pi_{m+1}(S^1 \wedge M), \\
\sigma_b: \pi_m(M) & \to \pi_{m+2} (S^1 \wedge M \wedge S^1)
\end{align*}
such that 
\begin{equation} \label{sigmalr}
\begin{tikzcd}
\pi_m(M) \ar{r}{\sigma_r} \ar{rd}[swap]{\sigma_l} &  \pi_{m+1}(M \wedge S^1) \ar{d} \\
 &      \pi_{m+1}(S^1 \wedge M)
\end{tikzcd}
\end{equation}
and 
\begin{equation} \label{sigmablr}
\begin{tikzcd}
\pi_m(M) \ar{r}{\sigma_b} \ar{d}[swap]{\sigma_l} & \pi_{m+2}(S^1 \wedge M \wedge S^1) \ar{d}{\cong} \\
\pi_{m+1}(S^1 \wedge M) \ar{r}{\sigma_r} & \pi_{m+2}\bigl((S^1  \wedge M) \wedge S^1 \bigr) 
\end{tikzcd}
\end{equation} 
are commutative. 
By commutativity of (\ref{sigmalr}) and since the map 
\[\begin{tikzcd}
\pi_*\bigl(M \wedge (S^1 \wedge S^1)\bigr) \ar{r} & \pi_*(M \wedge (S^1 \wedge S^1)\bigr), 
\end{tikzcd}\]
interchanging the two  $S^1$'s,   is the negative of the identity map, 
we get that $\sigma_r \circ \sigma_l = - \sigma_l \circ \sigma r$. 
Using this and the commutativity of (\ref{sigmablr}) one gets
$\sigma_b \circ \sigma_r = - \sigma_r \circ \sigma_b$. 
\begin{lem}
Let $\cdots \to  X_{n-1} \to X_n \to X_{n+1} \to \cdots$ be a  sequence  of cofibrations of $R$-modules. 
The maps $\sigma_r$, $\sigma_l$, $\sigma_b$ 
define isomorphisms of unrolled exact couples and thus isomorphisms of spectral sequences 
\begin{align*}
 E_{*,*}^*(X_{\LargerCdot}) & \to E^*_{*,*+1}(X_{\LargerCdot} \wedge S^1), \\  
  E_{*,*}^*(X_{\LargerCdot}) & \to E^*_{*,*+1}(S^1 \wedge X_{\LargerCdot}), \\  
      E_{*,*}^*(X_{\LargerCdot}) & \to E^*_{*,*+2}(S^1 \wedge X_{\LargerCdot} \wedge S^1).  
     \end{align*}
\end{lem}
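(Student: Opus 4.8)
The plan is, for each of $\sigma_r$, $\sigma_l$ and $\sigma_b$, to exhibit it as a morphism of unrolled exact couples from the couple underlying $E^*_{*,*}(X_{\LargerCdot})$ to the one underlying $E^*_{*,*+1}(X_{\LargerCdot} \wedge S^1)$ (respectively $E^*_{*,*+1}(S^1 \wedge X_{\LargerCdot})$, $E^*_{*,*+2}(S^1 \wedge X_{\LargerCdot} \wedge S^1)$). Since $\sigma_r$, $\sigma_l$, $\sigma_b$ are already isomorphisms on the underlying $\mathbb{Z}$-graded groups, such a morphism is automatically an isomorphism of exact couples, and an isomorphism of exact couples induces isomorphisms on the derived couples and hence on every page of the associated spectral sequences; this yields the asserted isomorphisms of spectral sequences.

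First I would collect the formal input. Each of the functors $- \wedge S^1$, $S^1 \wedge -$ and $S^1 \wedge - \wedge S^1$ on $\mathscr{M}_R$ preserves cofibrations: smashing a retraction of mapping cylinders with $S^1$ again yields such a retraction, using $I_+ \wedge S^1 \cong S^1 \wedge I_+$. Hence $X_{\LargerCdot} \wedge S^1$, $S^1 \wedge X_{\LargerCdot}$ and $S^1 \wedge X_{\LargerCdot} \wedge S^1$ are again towers of cofibrations of $R$-modules and the spectral sequences of Lemma \ref{ss} are defined. These functors also preserve colimits, so they commute with the formation of mapping cones and of the quotients $X_n/X_{n-1}$; in particular there are natural identifications $C(X_{n-1} \to X_n) \wedge S^1 \cong C(X_{n-1} \wedge S^1 \to X_n \wedge S^1)$ and $(X_n/X_{n-1}) \wedge S^1 \cong (X_n \wedge S^1)/(X_{n-1} \wedge S^1)$ compatible with the canonical comparison maps, and similarly for the other two functors.

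Next, the maps $i$ and $j$ of the exact couple associated with $X_{\LargerCdot}$ are induced by $X_{n-1} \to X_n$ and $X_n \to X_n/X_{n-1}$, while $\sigma_r$, $\sigma_l$, $\sigma_b$ are natural in the $R$-module variable; so compatibility of each $\sigma$ with $i$ and $j$ is immediate from naturality together with the identifications above. The delicate point is compatibility with the connecting map $\partial$. For the tower $X_{\LargerCdot}$, the relevant map sends $x \in \pi_m(X_n/X_{n-1})$ to $(-1)^m$ times its image under $\pi_m(X_n/X_{n-1}) \cong \pi_m(C(X_{n-1} \to X_n)) \xrightarrow{d} \pi_{m-1}(X_{n-1})$, whereas for $X_{\LargerCdot} \wedge S^1$ the corresponding sign is $(-1)^{m+1}$ (and for $S^1 \wedge X_{\LargerCdot} \wedge S^1$ the degree shift is by $2$). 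I would therefore reduce to comparing the connecting maps $d$ themselves. Using the identification of mapping cones from the previous step, the definition of $d$ via $C(f)/N \cong M \wedge (I/\partial I) \cong M \wedge S^1$ together with the suspension isomorphism, and naturality of $\sigma_r$, this comes down to the claim that the suspension along the ``new'' $S^1$ anticommutes with the one coming from $I/\partial I \cong S^1$, i.e. contributes a sign $-1$. For $\sigma_r$ this is exactly the fact, already used in the paragraph preceding the lemma, that the self-map of $\pi_*(M \wedge (S^1 \wedge S^1))$ interchanging the two circles is $-\id$; for $\sigma_l$ one uses instead the relation $\sigma_r^{-1}\sigma_l = -\sigma_l\sigma_r^{-1}$, which follows from $\sigma_r \circ \sigma_l = -\sigma_l \circ \sigma_r$. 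Both reduce, as in the proof of Lemma \ref{compLES}, to the boundary-orientation formula (\ref{randformel}) of Remark \ref{or} applied to a product of discs. This sign $-1$ is precisely what turns $(-1)^{m+1}$ into $(-1)^m$, so $\sigma_r$ and $\sigma_l$ commute with $\partial$ strictly; this is the reason for the sign $(-1)^m$ built into the definition of $\partial$ in Lemma \ref{ss}. Having treated $\sigma_r$ and $\sigma_l$, I would deduce the case of $\sigma_b$ from the commutative square (\ref{sigmablr}): it identifies $\sigma_b$, up to the natural and structure-preserving reassociation isomorphism $\pi_*(S^1 \wedge M \wedge S^1) \cong \pi_*((S^1 \wedge M) \wedge S^1)$, with the composite $\sigma_r \circ \sigma_l$, which is a composite of isomorphisms of exact couples.

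The main obstacle is the bookkeeping in the $\partial$-compatibility step: one must keep track of the various $S^1$-factors — the ``new'' ones introduced by $\sigma_r$ and $\sigma_l$ and the one coming from $I/\partial I$ in the definition of the connecting map — and verify that the only sign they jointly contribute is the single $-1$ that cancels the degree-shift sign. Nothing here is conceptually hard, but, exactly as in Lemma \ref{compLES}, the argument has to be run through the fixed orientation conventions of Remark \ref{or} rather than obtained by merely gesturing at the cofiber sequences.
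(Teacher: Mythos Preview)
Your proposal is correct and uses the same underlying ingredients as the paper's proof: naturality of the suspension isomorphisms for compatibility with $i$ and $j$, and the sign analysis for $\partial$ via the swap-of-circles fact and the anticommutativity relations $\sigma_r\sigma_l = -\sigma_l\sigma_r$, $\sigma_b\sigma_r = -\sigma_r\sigma_b$. The only difference is organizational: the paper treats the $\sigma_b$ case directly (where the two signs from the swap and from $\sigma_b\sigma_r = -\sigma_r\sigma_b$ cancel, matching the degree shift by~$2$), leaving $\sigma_r$ and $\sigma_l$ implicit, whereas you handle $\sigma_r$ and $\sigma_l$ first and then obtain $\sigma_b$ for free from diagram~(\ref{sigmablr}) as the composite $\sigma_r\circ\sigma_l$.

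One small remark: your attribution of the swap-of-circles sign to the boundary formula~(\ref{randformel}) is slightly off---that sign is a direct orientation computation on $S^1\wedge S^1$ (the transposition has degree $-1$), not an instance of~(\ref{randformel}); the latter is what underlies Lemma~\ref{compLES}, while the former is stated separately just before the lemma and is also the source of $\sigma_r\sigma_l = -\sigma_l\sigma_r$.
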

\begin{proof}
Consider for example the third  case. We only need to check that $\sigma_b$ and $\partial$ commute. Recall from the proof of Lemma \ref{ss} that there was a sign in the definition of $\partial$. 
The claim follows from the fact that for a cofibration of $R$-modules $f \colon M \to N$ and for  pointed spaces $X$  and $Y$ the diagram


\[\begin{tikzcd}
(X \wedge N \wedge Y)/(X \wedge M \wedge Y) \ar{d}{\cong} & C(\id_X \wedge f \wedge \id_Y) \ar{d}{\cong} \ar{l}[swap]{\simeq} \ar{r} & (X \wedge M \wedge Y) \wedge S^1  \ar{d}{\cong}\\
X \wedge (N/M) \wedge Y & X \wedge C(f) \wedge Y \ar{l}[swap]{\simeq} \ar{r} & X \wedge (M \wedge S^1) \wedge Y,
\end{tikzcd}
\]
commutes, 
from $\sigma_b \circ \sigma_r = - \sigma_r \circ \sigma_b$ and 
from the fact that interchanging two $S^1$'s induces the negative of the identity in homotopy. 
\end{proof}
We want to define pairings of suspensions of towers.
Note that for $R$-modules  $M$ and $N$ and for  pointed spaces $X$ and $Y$ we can define  a natural isomorphism
 \[X \wedge (M \wedge_R N) \wedge Y \cong (X \wedge M) \wedge_R (N \wedge Y). \]
Let $X_{\LargerCdot}$, $Y_{\LargerCdot}$ and $W_{\LargerCdot}$ be towers of cofibrations of $R$-modules. Suppose that we have a pairing of towers $X_{\LargerCdot} \wedge_R Y_{\LargerCdot} \to W_{\LargerCdot}$.  Then 
 the maps 
\[\begin{tikzcd}
 (X \wedge X_n) \wedge_R (Y_k \wedge Y)\ar{r}{\cong} &   X \wedge (X_n \wedge_R Y_k) \wedge Y \ar{r} & X \wedge W_{n+k} \wedge Y 
 \end{tikzcd}\]
define a pairing of towers $(X \wedge X_{\LargerCdot}) \wedge_R (Y_{\LargerCdot} \wedge Y) \to X \wedge W_{\LargerCdot} \wedge Y$. 

\begin{lem} \label{prodsusp}
The  diagram
\[\begin{tikzcd}
E^1_{n,m}(X_{\LargerCdot}) \otimes E^1_{k,l}(Y_{\LargerCdot}) \ar{r}{\star} \ar{d}[swap]{\sigma_l \otimes \sigma_r} & E^1_{n+k, m+l}(W_{\LargerCdot}) \ar{d}{\sigma_b} \\
E^1_{n, m+1}(S^1 \wedge X_{\LargerCdot}) \otimes E^1_{k, l+1}(Y_{\LargerCdot} \wedge S^1) \ar{r}{\star} & E^1_{n+k, m+l+2}(S^1 \wedge W_{\LargerCdot} \wedge S^1) 
\end{tikzcd} \]
commutes up to the sign $(-1)^{k+l}$. 
\end{lem}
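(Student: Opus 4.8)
The plan is to reduce the whole statement to the single algebraic fact that the three suspension maps are, up to sign, Künneth products against the fundamental class of $S^1_R$, and then to read off the sign $(-1)^{k+l}$ from the sign-free associativity of the Künneth product. Concretely, let $\iota\in\pi_1(S^1_R)=h\mathscr{M}_R(S^1_R,S^1_R)$ denote the class of the identity, and let $\cdot$ denote the lax monoidal (Künneth) product of $\pi_*$ (recall $\pi_*\colon\mathscr{D}_R\to\mathscr{G}\mathscr{A}$ is lax symmetric monoidal). The first step is to establish, for every $R$-module $M$ and every $f\in\pi_m(M)$, the identities
\[\sigma_r(f)=f\cdot\iota,\qquad \sigma_l(f)=(-1)^{m}\,\iota\cdot f,\qquad \sigma_b(f)=(-1)^{m}\,\iota\cdot f\cdot\iota,\]
where on the right one uses the canonical identifications $M\wedge S^1\cong M\wedge_R S^1_R$, $S^1\wedge M\cong S^1_R\wedge_R M$ and $S^1\wedge M\wedge S^1\cong S^1_R\wedge_R M\wedge_R S^1_R$. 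The identity for $\sigma_r$ holds because the fixed isomorphism $S^{m+1}_R\cong S^m_R\wedge S^1$ is by construction the equivalence $S^{m+1}_R\simeq S^m_R\wedge_R S^1_R$ of \cite[p.92]{EKMM} used to define $\cdot$ (this is exactly the compatibility recorded after (\ref{smiso}) for $m\geq 0$, and $m<0$ reduces to it). The identity for $\sigma_l$ then follows from (\ref{sigmalr}) together with the fact that transposing $S^1_R$ past $M$ acts on homotopy by the sign $(-1)^{m\cdot 1}$ prescribed by the symmetry of $\mathscr{G}\mathscr{A}$, and the one for $\sigma_b$ follows by composing, using (\ref{sigmablr}). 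Alternatively, all three identities can be checked by reducing to the corresponding statements for based spaces and the fixed orientation-preserving homeomorphisms, exactly as in the proofs of Lemmas \ref{compLES}, \ref{prodcomp2} and \ref{Leibniz}.

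Granting this, the lemma is immediate. By construction the product $\star$ on $E^1$-terms is the Künneth product $\pi_{n+m}(X_n/X_{n-1})\otimes\pi_{k+l}(Y_k/Y_{k-1})\to\pi_{n+m+k+l}(X_n/X_{n-1}\wedge_R Y_k/Y_{k-1})$ followed by the map induced by $X_n/X_{n-1}\wedge_R Y_k/Y_{k-1}\to W_{n+k}/W_{n+k-1}$, and the pairing of the suspended towers is the same map with $S^1\wedge(-)\wedge S^1$ applied, transported along the natural isomorphism between $S^1\wedge(M\wedge_R N)\wedge S^1$ and $(S^1\wedge M)\wedge_R(N\wedge S^1)$. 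Since $\sigma_l$, $\sigma_r$ and $\sigma_b$ are natural, it therefore suffices, writing $x\in\pi_{n+m}(X_n/X_{n-1})$ and $y\in\pi_{k+l}(Y_k/Y_{k-1})$, to compare $\sigma_b(x\cdot y)$ with $(\sigma_l x)\cdot(\sigma_r y)$ as classes in $\pi_*\bigl(S^1\wedge(X_n/X_{n-1}\wedge_R Y_k/Y_{k-1})\wedge S^1\bigr)$. By the first step and the sign-free associativity of $\cdot$,
\[\sigma_b(x\cdot y)=(-1)^{|x|+|y|}\,\iota\cdot x\cdot y\cdot\iota,\qquad (\sigma_l x)\cdot(\sigma_r y)=(-1)^{|x|}\,\iota\cdot x\cdot y\cdot\iota,\]
so that $\sigma_b(x\cdot y)=(-1)^{|y|}(\sigma_l x)\cdot(\sigma_r y)=(-1)^{k+l}(\sigma_l x)\cdot(\sigma_r y)$, which is precisely the asserted commutativity up to $(-1)^{k+l}$.

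The main obstacle is the first step: correctly matching the ad hoc suspension isomorphisms $S^{m+1}_R\cong S^m_R\wedge S^1\cong S^1\wedge S^m_R$ and $S^{m+2}_R\cong S^1\wedge S^m_R\wedge S^1$ with the Künneth-product structure on $\pi_*$ — in particular pinning down the sign $(-1)^m$ in $\sigma_l(f)=(-1)^m\iota\cdot f$ — and keeping track of how the natural regrouping isomorphism $S^1\wedge(M\wedge_R N)\wedge S^1\cong(S^1\wedge M)\wedge_R(N\wedge S^1)$ meshes with the associativity constraints implicit in the four-fold product $\iota\cdot x\cdot y\cdot\iota$. Once this translation is in place — whether via the compatibility diagrams (\ref{susp1})--(\ref{sigmablr}) and the symmetry sign of $\mathscr{G}\mathscr{A}$, or via reduction to spaces as in the earlier lemmas — the remainder is the one-line sign computation above.
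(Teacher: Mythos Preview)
Your argument is correct. The key identities $\sigma_r(f)=f\cdot\iota$, $\sigma_l(f)=(-1)^{m}\iota\cdot f$, $\sigma_b(f)=(-1)^{m}\iota\cdot f\cdot\iota$ do follow from the compatibilities recorded around (\ref{smiso}) together with (\ref{sigmalr}) and (\ref{sigmablr}), and once they are in hand the sign $(-1)^{k+l}$ drops out of associativity exactly as you write.

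The paper's proof is organized differently but uses the same raw ingredients. Rather than introducing the class $\iota$ and recasting the suspensions as K\"unneth multiplications, the paper first invokes (\ref{sigmalr}) and (\ref{sigmablr}) to rewrite $\sigma_l$ and $\sigma_b$ entirely in terms of $\sigma_r$ and the twist, and is then left with a two-square ladder expressing how $\sigma_r$ on each factor interacts with the product; those two squares are exactly the content of (\ref{susp1}) and (\ref{susp2}), with the sign $(-1)^t$ coming from (\ref{susp1}). Your approach is more conceptual---it packages the whole computation into a single associative four-fold product $\iota\cdot x\cdot y\cdot\iota$---at the cost of front-loading the work into your ``first step,'' which in the end amounts to the same verifications (\ref{susp1})--(\ref{susp2}). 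The paper's approach stays closer to the raw diagrams and avoids introducing the identification $M\wedge S^1\cong M\wedge_R S^1_R$, which is one fewer thing to keep track of; yours makes the origin of the sign more transparent once the dictionary is set up.
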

\begin{proof}
Because of  (\ref{sigmablr}) and (\ref{sigmalr})  we only have to check  that the diagram
\[\begin{tikzcd}[column sep = small]
\pi_{s}(X_n/{X_{n-1}}) \otimes \pi_{t}(Y_k/{Y_{k-1}})  \MySymbo{ddr} \ar{r} \ar{dd}{\sigma_r \otimes \id} & \pi_{s+t}(X_n/{X_{n-1}}  \wedge_R Y_k/{Y_{k-1}})  \ar{d}{\sigma_r} \\
 & \pi_{s+t+1}\bigl((X_n/{X_{n-1}}  \wedge_R Y_k/{Y_{k-1}}) \wedge S^1\bigr) \ar{d}{\cong}\\
\pi_{s+1}(X_n/{X_{n-1}} \wedge S^1) \otimes \pi_{t}(Y_k/{Y_{k-1}})  \ar{d}{\cong} \ar{r} & \pi_{s+t+1}\bigl( (X_n/{X_{n-1}} \wedge S^1) \wedge_R Y_k/{Y_{k-1}}\bigr) \ar{d} {\cong}  \\
\pi_{s+1}(S^1 \wedge X_n/{X_{n-1}}) \otimes \pi_t(Y_k/{Y_{k-1}} ) \ar{dd}{\id \otimes \sigma_r} \ar{r} &  \pi_{s+t+1}\bigl((S^1 \wedge X_n/{X_{n-1}} ) \wedge_R Y_k/{Y_{k-1}}) \ar{d}{\sigma_r} \\
 &  \pi_{s+t+2}\Bigl(\bigl((S^1 \wedge X_n/{X_{n-1}}) \wedge_R Y_k/{Y_{k-1}}\bigr) \wedge S^1\Bigr) \ar{d}{\cong} \\
 \pi_{s+1}(S^1 \wedge X_n/{X_{n-1}}) \otimes \pi_{t+1}(Y_k/{Y_{k-1}} \wedge S^1)  \ar{r} & \pi_{s+t+2}\bigl( (S^1 \wedge X_n/{X_{n-1}}) \wedge_R (Y_k/{Y_{k-1}} \wedge S^1)\bigr) 
\end{tikzcd}\]
commutes up to the indicated sign. 
This follows from (\ref{susp1}) and (\ref{susp2}). 
\end{proof}

Using the above, one easily gets: 
\begin{lem} \label{ss3}
Lemma \ref{ss2} is true without the assumptions $n+m \geq 3$ and $k+l \geq 3$. 
\end{lem}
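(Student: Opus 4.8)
The plan is to bootstrap from Lemma~\ref{ss2}, which already covers total degrees $\geq 3$, by suspending the towers until all the relevant total degrees are at least $3$, applying Lemma~\ref{ss2} there, and then transporting the conclusion back down along the suspension isomorphisms with the help of Lemma~\ref{prodsusp}.

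Concretely, I would fix $x \in E^1_{n,m}(X_{\LargerCdot}) \cap Z^r_{n,m}$ and $y \in E^1_{k,l}(Y_{\LargerCdot}) \cap Z^r_{k,l}$ of arbitrary total degree and choose $N \geq 0$ with $n+m+N \geq 3$ and $k+l+N \geq 3$. One then passes to the towers $(S^1)^{\wedge N} \wedge X_{\LargerCdot}$, $Y_{\LargerCdot} \wedge (S^1)^{\wedge N}$ and $(S^1)^{\wedge N} \wedge W_{\LargerCdot} \wedge (S^1)^{\wedge N}$, which are again towers of cofibrations of $R$-modules and carry the pairing induced as in the discussion preceding Lemma~\ref{prodsusp}. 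The $N$-fold iterates of $\sigma_l$, $\sigma_r$ and $\sigma_b$ are compositions of single-suspension maps, hence are isomorphisms of unrolled exact couples — so they commute with the maps $i$, $j$, $\partial$ of the couples — and of the associated spectral sequences. Since $\sigma_l^N(x)$ and $\sigma_r^N(y)$ now have total degree $\geq 3$, Lemma~\ref{ss2} applies to the suspended data and produces classes $\bar{x}'$, $\bar{y}'$, $\bar{w}'$ together with the Leibniz identity in the suspended spectral sequences.

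It then remains to define $\bar{x} := (\sigma_l^N)^{-1}(\bar{x}')$, $\bar{y} := (\sigma_r^N)^{-1}(\bar{y}')$ and $\bar{w} := \varepsilon\,(\sigma_b^N)^{-1}(\bar{w}')$ for an appropriate sign $\varepsilon$, and to check the two conclusions of Lemma~\ref{ss2} for these classes. The first conclusion is immediate from the fact that $\sigma_l^N$, $\sigma_r^N$, $\sigma_b^N$ intertwine the couple maps and commute with $\partial$: applying $(\sigma_l^N)^{-1}$ turns ``$\bar{x}'$ maps to $\partial(\sigma_l^N x) = \sigma_l^N(\partial x)$'' into ``$\bar{x}$ maps to $\partial x$'', and likewise for $\bar{y}$, while for $\bar{w}$ one additionally uses Lemma~\ref{prodsusp} in the form $\sigma_l^N(x) \star \sigma_r^N(y) = \pm\,\sigma_b^N(x\star y)$ — and it is precisely this sign that one takes for $\varepsilon$, so that $\bar{w}$ maps to $\partial(x\star y)$. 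For the second conclusion, I would push the suspended identity $\tilde{w}' = \tilde{x}' \star \sigma_r^N(y) + (-1)^{n+m+N}\sigma_l^N(x)\star\tilde{y}'$ through $(\sigma_b^N)^{-1}$, using $\tilde{x}' = \sigma_l^N(\tilde{x})$ and $\tilde{y}' = \sigma_r^N(\tilde{y})$ (which follow from commuting with $j$) and rewriting both $\star$-products with Lemma~\ref{prodsusp}; the outcome is $\tilde{w} = \tilde{x}\star y + (-1)^{n+m}x\star\tilde{y}$.

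The only real obstacle is the sign bookkeeping. One must keep track of the factors $(-1)^{k+l}$ contributed by each application of Lemma~\ref{prodsusp}, of the sign built into the definition of $\partial$ in the proof of Lemma~\ref{ss}, of the relations $\sigma_r\sigma_l = -\sigma_l\sigma_r$ and $\sigma_b\sigma_r = -\sigma_r\sigma_b$, and of the $(-1)^{n+m}$ in Lemma~\ref{ss2}, and confirm that upon desuspension they cancel down to exactly the signs appearing in the statement for $x$ and $y$, the residual global sign being absorbed into the choice of $\bar{w}$. Since Lemma~\ref{ss2} only asserts the existence of $\bar{x}$, $\bar{y}$, $\bar{w}$, this absorption is harmless, and once the suspended set-up is fixed the remaining verification is routine.
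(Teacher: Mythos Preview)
Your proposal is correct and is precisely the argument the paper has in mind: the paper merely writes ``Using the above, one easily gets'' for this lemma, referring to the suspension isomorphisms of spectral sequences and Lemma~\ref{prodsusp}, and your outline spells out exactly this reduction to Lemma~\ref{ss2} together with the requisite sign bookkeeping.
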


We can conclude: 
\begin{cor} \label{ssmult}
In the situation of Lemma \ref{ss2} the following properties hold: The map
\begin{align*}
\star: E^r_{n,m}(X_{\LargerCdot}) \otimes_{\mathbb{Z}} E^r_{k,l}(Y_{\LargerCdot}) & \to E^r_{n+k,m+l}(W_{\LargerCdot})  \\ \nonumber
[\alpha] \otimes [\beta] & \mapsto [\alpha \star \beta] 
\end{align*}
is well-defined.  The Leibniz rule holds:
\[d^r(x \star y) = d^r (x) \star y + (-1)^{n+m} x \star d^r(y).\]
For $r \geq 2$ the following diagram commutes:
\begin{equation*} \label{indprod}
\begin{tikzcd}
E^r_{*,*}(X_{\LargerCdot}) \otimes_{\mathbb{Z}} E^r_{*,*}(Y_{\LargerCdot}) \ar{d}[swap]{\cong} \ar{rr}{\star} & & E^r_{*,*}(W_{\LargerCdot}) \ar{d}{\cong} \\
H_*(E^{r-1}_{*,*}) \otimes_{\mathbb{Z}} H_*(E^{r-1}_{*,*}) \ar{r} &  H_*(E^{r-1}_{*,*} \otimes_{\mathbb{Z}} E^{r-1}_{*,*}) \ar{r}{H_*(\star)} & H_*(E^{r-1}_{*,*})
\end{tikzcd}
\end{equation*}
\end{cor}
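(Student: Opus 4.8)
The plan is to deduce all three assertions from Lemma~\ref{ss3} by the standard bookkeeping for multiplicative exact couples, in the spirit of \cite{Du}. Recall that $E^r_{*,*}(X_{\LargerCdot})$ is the subquotient $Z^r_{*,*}/B^r_{*,*}$ of $E^1_{*,*}(X_{\LargerCdot})$, where $Z^r_{n,m}$ consists of those $x$ with $\partial(x) \in \im\bigl(\pi_{n+m-1}(X_{n-r}) \to \pi_{n+m-1}(X_{n-1})\bigr)$, where $B^1_{*,*} = 0$, and where $B^{r+1}_{*,*}/B^r_{*,*} = \im(d^r)$; for an $r$-cycle $x \in Z^r_{n,m}$ the class $d^r[x] \in E^r_{n-r,m+r-1}$ is represented by the element $\tilde x \in \pi_{n+m-1}(X_{n-r}/X_{n-r-1}) = E^1_{n-r,m+r-1}$ obtained by lifting $\partial(x)$ along $\pi_{n+m-1}(X_{n-r}) \to \pi_{n+m-1}(X_{n-1})$ and projecting (this is exactly the class called $\tilde x$ in Lemma~\ref{ss2}). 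In this language Lemma~\ref{ss3} reads: if $x$ and $y$ are $r$-cycles then so is $x \star y$, and their $d^r$-representatives satisfy $\tilde w = \tilde x \star y + (-1)^{n+m}\, x \star \tilde y$ in $E^1_{*,*}(W_{\LargerCdot})$, where $n+m = |x|$ and the products on the right are the $E^1$-level products.

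First I would show that $\star$ descends to each $E^r$. The inclusion $Z^r(X_{\LargerCdot}) \star Z^r(Y_{\LargerCdot}) \subseteq Z^r(W_{\LargerCdot})$ is immediate from the existence of the lift $\bar w$ provided by Lemma~\ref{ss3}. For compatibility with the $B^r$ I would prove simultaneously, by induction on $r$, that $B^r(X_{\LargerCdot}) \star Z^r(Y_{\LargerCdot}) \subseteq B^r(W_{\LargerCdot})$ and $Z^r(X_{\LargerCdot}) \star B^r(Y_{\LargerCdot}) \subseteq B^r(W_{\LargerCdot})$; the case $r = 1$ is trivial since $B^1 = 0$. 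For the inductive step write any $b \in B^{r+1}(X_{\LargerCdot})$ as $b = \tilde z + b_0$ with $z$ an $r$-cycle and $b_0 \in B^r$ (possible because $[b] \in \im(d^r)$ in $E^r$). Given an $(r+1)$-cycle $y$ (in particular an $r$-cycle), one has $b_0 \star y \in B^r \star Z^r \subseteq B^r \subseteq B^{r+1}$ by the inductive hypothesis, while Lemma~\ref{ss3} applied to $z$ and $y$ gives $\tilde z \star y = \tilde w - (-1)^{|z|}\, z \star \tilde y$, where $\tilde w$ lies in $B^{r+1}$ (being a $d^r$-representative) and where $\tilde y \in B^r$ because $d^r[y] = 0$, so that $z \star \tilde y \in Z^r \star B^r \subseteq B^r \subseteq B^{r+1}$; hence $b \star y \in B^{r+1}$. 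The argument for $Z^{r+1}(X_{\LargerCdot}) \star B^{r+1}(Y_{\LargerCdot}) \subseteq B^{r+1}(W_{\LargerCdot})$ is entirely analogous, using the other half of the inductive hypothesis. Therefore $[\alpha] \star [\beta] \coloneqq [\alpha \star \beta]$ is a well-defined map $E^r_{*,*} \otimes_{\mathbb{Z}} E^r_{*,*} \to E^r_{*,*}$.

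The Leibniz rule is then read off directly: for $r$-cycles $x \in Z^r_{n,m}(X_{\LargerCdot})$ and $y \in Z^r_{k,l}(Y_{\LargerCdot})$ representing classes in $E^r$, the product $x \star y$ represents $[x] \star [y]$, its $d^r$-representative $\tilde w$ represents $d^r\bigl([x] \star [y]\bigr)$, and the identity $\tilde w = \tilde x \star y + (-1)^{n+m}\, x \star \tilde y$ together with the well-definedness just established (so that $\tilde x \star y$ represents $d^r[x] \star [y]$ and $x \star \tilde y$ represents $[x] \star d^r[y]$) gives $d^r\bigl([x] \star [y]\bigr) = d^r[x] \star [y] + (-1)^{n+m}\, [x] \star d^r[y]$, as claimed.

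Finally, for the compatibility with the passage to homology (for $r \geq 2$), observe that the Leibniz rule makes $\star \colon E^{r-1}_{*,*} \otimes_{\mathbb{Z}} E^{r-1}_{*,*} \to E^{r-1}_{*,*}$ a chain map for the total differential on the source with the usual Koszul sign, so it induces $H_*(\star)$ on homology; the map $H_*(E^{r-1}_{*,*}) \otimes_{\mathbb{Z}} H_*(E^{r-1}_{*,*}) \to H_*(E^{r-1}_{*,*} \otimes_{\mathbb{Z}} E^{r-1}_{*,*})$ in the diagram is the evident one $[\alpha] \otimes [\beta] \mapsto [\alpha \otimes \beta]$; and under the standard identification $E^r_{*,*} \cong H_*(E^{r-1}_{*,*}, d^{r-1})$ both composites around the square send $[\alpha] \otimes [\beta]$ to $[\alpha \star \beta]$, so the square commutes. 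I expect the only genuinely non-formal part to be the simultaneous induction in the second step: everything else is a transcription of Lemma~\ref{ss3}, whereas there one must keep careful track of which subgroup $B^s$ each auxiliary class lands in and of the sign $(-1)^{|z|}$, and of the fact that both halves of the inductive hypothesis are needed to close the induction.
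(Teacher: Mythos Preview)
Your proof is correct and is precisely the standard bookkeeping argument the paper has in mind; the paper itself gives no explicit proof of this corollary, simply writing ``We can conclude'' after Lemma~\ref{ss3}, so you have supplied the details behind that sentence in the expected way.
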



Let $X_{\LargerCdot} \wedge_R Y_{\LargerCdot} \to W_{\LargerCdot}$ be pairing of towers of cofibrations. 
We  study the convergence properties  of the associated pairing  of spectral sequences:
 We have a product
 \begin{align*}
 \star:  \pi_n(\colim_i X_i) \otimes_{\mathbb{Z}} \pi_m(\colim_j Y_j) & \to \pi_{n+m}(\colim_i X_i \wedge_R \colim_j Y_j) \\
  & \xrightarrow{\cong} \pi_{n+m}(\colim_{i,j} X_i  \wedge_R Y_j)  \to \pi_{n+m}(\colim_k W_k) .
 \end{align*}
 
We define $F^n_s(X_{\LargerCdot})$ by the image of $\pi_n(X_s) \to \pi_n(\colim_i X_i)$. 
We get a filtration  
\[\dots \subseteq F^n_{s-1}(X_{\LargerCdot}) \subseteq F^n_s(X_{\LargerCdot}) \subseteq \cdots \]
 of $\pi_n(\colim_i X_i)$. This is the filtration we used implicitly in the proof of the convergence statement of  Lemma \ref{ss}.    Analogously, we define filtrations of the groups $\pi_m(\colim_j Y_j)$ and $\pi_{n+m}(\colim_k W_k)$.  

The product $\star$ respects the filtrations, so we get a product
\[F_s^n(X_{\LargerCdot})/F_{s-1}^n(X_{\LargerCdot}) \otimes_{\mathbb{Z}} F^m_t(Y_{\LargerCdot})/F^m_{t-1}(Y_{\LargerCdot}) \to F_{s+t}^{n+m}(W_{\LargerCdot})/F_{s+t-1}^{n+m}(W_{\LargerCdot}).    \]
By Corollary \ref{ssmult} the map 
\[ \star: E^1_{*,*}(X_{\LargerCdot}) \otimes_{\mathbb{Z}} E^1_{*,*}(Y_{\LargerCdot}) \to E^1 _{*, *}(W_{\LargerCdot})\] induces a product
\[\star: E^{\infty}_{*,*}(X_{\LargerCdot}) \otimes_{\mathbb{Z}} E^{\infty}_{*,*}(Y_{\LargerCdot}) \to E^{\infty}_{*,*}(W_{\LargerCdot}) .\]
These two products are compatible: 

\begin{lem} \label{convmult}
Suppose  that we have $X_n = Y_n = W_n = \ast$ for $n < 0$, so that the spectral sequences converge. 
Then the following diagram commutes: 
\[ \begin{tikzcd}
F_s^n(X_{\LargerCdot})/F_{s-1}^n(X_{\LargerCdot}) \otimes F^m_t(Y_{\LargerCdot})/F^m_{t-1}(Y_{\LargerCdot}) \ar{r}{\star} \ar{d}[swap]{\cong} & F_{s+t}^{n+m}(W_{\LargerCdot})/F_{s+t-1}^{n+m}(W_{\LargerCdot}) \ar{d}{\cong} \\
E^{\infty}_{s, n-s}(X_{\LargerCdot}) \otimes E^{\infty}_{t, m-t}(Y_{\LargerCdot}) \ar{r}{\star} & E^{\infty}_{s+t, n+m-(s+t)}(W_{\LargerCdot}). 
\end{tikzcd}\]
\end{lem}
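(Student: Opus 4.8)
The plan is to unwind both products down to representatives and reduce the asserted commutativity to two facts: the naturality of the Künneth pairing $\pi_*(-)\otimes\pi_*(-)\to\pi_*(-\wedge_R-)$, and the commutative square that defines the induced pairing on subquotients.

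First I would make the isomorphism $\theta_X\colon F_s^n(X_{\LargerCdot})/F_{s-1}^n(X_{\LargerCdot})\xrightarrow{\cong}E^{\infty}_{s,n-s}(X_{\LargerCdot})$ explicit. Since $X_n=\ast$ for $n<0$, the filtration of $\pi_n(\colim_i X_i)=\colim_i\pi_n(X_i)$ by the $F^n_s(X_{\LargerCdot})$ is exhaustive and bounded below, so by the standard exact-couple formalism (\cite{Boar}) $\theta_X$ is described on representatives as follows: a class $a\in F^n_s(X_{\LargerCdot})$ admits a lift $\tilde a\in\pi_n(X_s)$; its image $j(\tilde a)\in\pi_n(X_s/X_{s-1})=E^1_{s,n-s}(X_{\LargerCdot})$ satisfies $\partial j(\tilde a)=0$, hence is an infinite cycle, and $\theta_X([a])$ is its class in $E^{\infty}_{s,n-s}(X_{\LargerCdot})$, independent of the chosen lift. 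I record the analogous $\theta_Y$ and $\theta_W$.

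Second I would spell out the two composites. By the construction of the induced pairing on $E^{\infty}$ from the $E^1$-level pairing $\star$ (Corollary \ref{ssmult} and the discussion preceding this lemma), and since $j(\tilde a)$ and $j(\tilde b)$ are infinite cycles, the product $\theta_X([a])\star\theta_Y([b])$ is represented by $j(\tilde a)\star j(\tilde b)\in\pi_{n+m}(W_{s+t}/W_{s+t-1})=E^1_{s+t,\,n+m-(s+t)}(W_{\LargerCdot})$, where $j(\tilde a)\star j(\tilde b)$ is the image of the Künneth product $j(\tilde a)\cdot j(\tilde b)\in\pi_{n+m}(X_s/X_{s-1}\wedge_R Y_t/Y_{t-1})$ under the map induced by $X_s/X_{s-1}\wedge_R Y_t/Y_{t-1}\to W_{s+t}/W_{s+t-1}$. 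On the other side, writing $p\colon X_s\wedge_R Y_t\to W_{s+t}$ for the structure map of the pairing of towers, I claim $a\star b\in F^{n+m}_{s+t}(W_{\LargerCdot})$ is the image in $\pi_{n+m}(\colim_k W_k)$ of $p_*(\tilde a\cdot\tilde b)$, where $\tilde a\cdot\tilde b\in\pi_{n+m}(X_s\wedge_R Y_t)$ is the Künneth product. This follows from naturality of the Künneth pairing together with the commutative square
\[\begin{tikzcd}[column sep=small]
\pi_n(X_s)\otimes\pi_m(Y_t) \ar{r} \ar{d} & \pi_{n+m}(X_s\wedge_R Y_t) \ar{d} \\
\pi_n(\colim_i X_i)\otimes\pi_m(\colim_j Y_j) \ar{r} & \pi_{n+m}(\colim_{i,j} X_i\wedge_R Y_j),
\end{tikzcd}\]
in which the right vertical map is induced by the canonical map $X_s\wedge_R Y_t\to\colim_{i,j}X_i\wedge_R Y_j$ (the smash functor commutes with colimits in each variable), followed by $\pi_{n+m}(\colim_{i,j}X_i\wedge_R Y_j)\to\pi_{n+m}(\colim_k W_k)$ and the defining compatibility of the pairing of towers with $p$. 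Consequently $\theta_W([a\star b])$ is the class of $j(p_*(\tilde a\cdot\tilde b))$ in $E^{\infty}_{s+t,\,n+m-(s+t)}(W_{\LargerCdot})$.

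Third, I would compare the two representatives in $E^1_{s+t,\,n+m-(s+t)}(W_{\LargerCdot})=\pi_{n+m}(W_{s+t}/W_{s+t-1})$. Naturality of the Künneth pairing with respect to the quotient maps $q_X\colon X_s\to X_s/X_{s-1}$ and $q_Y\colon Y_t\to Y_t/Y_{t-1}$ gives $j(\tilde a)\cdot j(\tilde b)=(q_X\wedge_R q_Y)_*(\tilde a\cdot\tilde b)$. Pushing forward along the induced map $X_s/X_{s-1}\wedge_R Y_t/Y_{t-1}\to W_{s+t}/W_{s+t-1}$, and using the square displayed just before Lemma \ref{ss2} that defines this induced map together with the fact that $j$ is induced by the quotient map $W_{s+t}\to W_{s+t}/W_{s+t-1}$, we obtain $j(\tilde a)\star j(\tilde b)=j(p_*(\tilde a\cdot\tilde b))$. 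Hence the two representatives already coincide in $E^1$, so the diagram commutes. I expect the only real work to be in the second step: checking carefully that the Künneth pairing (the lax monoidal structure of $\pi_*$) is compatible with the identification $\colim_i X_i\wedge_R\colim_j Y_j\cong\colim_{i,j}X_i\wedge_R Y_j$ and with the maps from the finite stages; once this naturality is in hand, the remainder is a formal, sign-free diagram chase.
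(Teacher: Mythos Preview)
Your argument is correct and is precisely the unwinding the paper has in mind: the paper's proof reads in its entirety ``This easily follows from the definitions of the vertical isomorphisms (see \cite[Lemma 5.6]{Boar}),'' and your three steps carry out exactly that unwinding. The only substantive content is your third step, where you identify the two $E^1$-representatives via naturality of the K\"unneth pairing and the defining square for the induced map on subquotients; this is the check the paper leaves to the reader.
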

\begin{proof}
This easily follows from the definitions of the vertical isomorphisms (see \cite[Lemma 5.6]{Boar}).
\end{proof}

\subsection{The Atiyah-Hirzebruch spectral sequence} \label{AHsec}

In this subsection we use the results of the previous subsection to derive the existence of the Atiyah-Hirzebruch 
spectral sequence and some of its properties, especially its multiplicativity. 
It is stated in \cite[Section IV.3]{EKMM} that the Atiyah-Hirzebruch spectral sequence  has the expected multiplicative properties. We  work this out in more detail. 


\begin{rmk} \label{postcomalg}
Recall that the Eilenberg-Mac Lane spectrum $HA$ of a commutative ring $A$ can be realized as a commutative $S$-algebra: 
Multiplicative infinite loop space theory provides a functor from commutative rings to $E_{\infty}$ ring spectra  (\cite{Maywhat}, \cite{MayMult}, \cite{MayRingSp}) 
 and  the functor $S \wedge_{\mathscr{L}} - $ induces a functor from $E_{\infty}$ ring spectra to commutative $S$-algebras (\cite[Corollary II.3.6]{EKMM}). Since we can functorially replace commutative $S$-algebras by cofibrant ones, we can assume that 
 $HA$ is cofibrant. 
 
If  $M$ is an $A$-module, then by \cite[Section IV.2]{EKMM}  the Eilenberg-Mac Lane spectrum $HM$ can be realized as an $HA$-module. Eilenberg-Mac Lane spectra are unique:
If  $R$ is a connective ($(-1)$-connected) commutative $S$-algebra and  if $X$ and $Y$ are $R$-modules with $\pi_*(X) = 0 = \pi_*(Y)$ for  $ * \neq s$, then, by \cite[p.3]{LewMand}, we have an isomorphism 
\begin{equation}  \label{uniquems}
 \mathscr{D}_{R}(X, Y) \cong \Hom_{\pi_0(R)}\bigl(\pi_s(X), \pi_s(Y)\bigr). 
\end{equation}

\begin{rmk}
Let $R$ be a connective  commutative $S$-algebra. By \cite[Proposition IV.3.1]{EKMM}
there is a morphism  
\begin{equation*} \label{Postnikov}
R \to H\pi_0(R)
\end{equation*}
 in $\bar{h} \mathscr{C}\mathscr{A}_S$ that induces the identity on $\pi_0$. The
proof shows that it is natural: If $R \to R'$ is a morphism of commutative $S$-algebras, then 
\begin{equation} \label{postnikovnat}
 \begin{tikzcd}
R \ar{r} \ar{d} & H\pi_0(R) \ar{d}\\
R' \ar{r} & H\pi_0(R')
\end{tikzcd}
\end{equation}
is commutative in $\bar{h}\mathscr{C}\mathscr{A}_S$ .

Let $R$ be a connective cofibrant commutative $S$-algebra, so that we have a map of commutative $S$-algebras $R \to H\pi_0(R)$ realizing the identity on $\pi_0$. 
Let $G$ be a connective $R$-module. Then, we have a map $G \to H\pi_0(G)$ in $\mathscr{D}_R$  realizing the identity map on $\pi_0$ (compare \cite[Theorem II.4.13]{Ru}). Here, $H\pi_0(G)$ is an an $R$-module by pulling back the $H\pi_0(R)$-action along the map $R \to H\pi_0(R)$. 
\end{rmk}

\end{rmk}



We now prove the existence of the Atiyah-Hirzebruch spectral sequence. 
Recall from (\cite[Definition IV.1.7]{EKMM}) that for an $S$-algebra $R$, a right $R$-module $E$ and a left $R$-module $M$ one defines 
\[E^R_n(M) \coloneqq \pi_n(E \wedge_R^L M).\]
Sometimes we  write $E_m$ for $\pi_m(E)$. 
\begin{thm} \label{AHss}
Let $R$ be a connective  cofibrant commutative $S$-algebra. Let $M$ be a connective $R$-module and let $G$ be an arbitrary $R$-module. 
Then, we have a strongly convergent spectral sequence of the form
\[ E^2_{n,m} = (HG_m)^R_nM \Longrightarrow G^R_{n+m}M.\]
Here, $HG_m$ is an $R$-module by pulling back the $H\pi_0(R)$-action along $R \to H\pi_0(R)$. 
\end{thm}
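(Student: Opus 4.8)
The plan is to realise the spectral sequence as the one produced by Lemma~\ref{ss} from a skeletal filtration of a CW approximation of $M$ smashed with $G$, and then to compute its $E^2$-page by comparison with the analogous spectral sequence for the Eilenberg--Mac Lane $R$-module $HG_m$. First I would use the connectivity of $M$ to choose a CW $R$-module approximation $\Gamma M \xrightarrow{\sim} M$ whose cells all have dimension $\geq 0$; let $M_n$ be its $n$-skeleton, so that $M_n = \ast$ for $n < 0$ and $\colim_n M_n = \Gamma M$. The skeletal inclusions are cofibrations of $R$-modules, and $G \wedge_R (-)$ preserves the homotopy extension property (apply it to a retraction of mapping cylinders, using that $G \wedge_R(-)$ is a left adjoint and that $G \wedge_R (A \wedge I_+) \cong (G \wedge_R A) \wedge I_+$), so $X_n \coloneqq G \wedge_R M_n$ is a tower of cofibrations of $R$-modules with $X_n = \ast$ for $n < 0$. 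Since $G \wedge_R(-)$ commutes with colimits and $\Gamma M$ is a cell $R$-module, $\colim_n X_n = G \wedge_R \Gamma M$ represents $G \wedge_R^L M$ in $\mathscr{D}_R$; hence Lemma~\ref{ss} gives a strongly convergent spectral sequence $E^1_{n,m} = \pi_{n+m}(X_n/X_{n-1}) \Longrightarrow \pi_{n+m}(G \wedge_R^L M) = G^R_{n+m}M$.

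Next I would compute the $E^1$-page with its differential. Let $J_n$ denote the set of $n$-cells of $\Gamma M$. Then $M_n/M_{n-1} \simeq \bigvee_{J_n} S^n_R$, so $X_n/X_{n-1} \simeq \bigvee_{J_n} \Sigma^n G$ (using $G \wedge_R S^n_R \simeq \Sigma^n G$), and therefore $E^1_{n,m} \cong \bigoplus_{J_n} \pi_m(G) = \bigoplus_{J_n} G_m$. Unwinding the definition of $\partial$ in Lemma~\ref{ss}, the differential $d^1 \colon E^1_{n,m} \to E^1_{n-1,m}$ is the map $\bigoplus_{J_n} G_m \to \bigoplus_{J_{n-1}} G_m$ induced by the matrix of attaching degrees of $\Gamma M$ in $\pi_0(R)$ --- that is, by the cellular boundary operator of $\Gamma M$ --- acting through the $\pi_0(R)$-module structure on $G_m = \pi_m(G)$. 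So $(E^1_{\ast,m}, d^1)$ is canonically the chain complex $C^{\mathrm{cell}}_\ast(\Gamma M) \otimes_{\pi_0(R)} G_m$, where $C^{\mathrm{cell}}_\ast(\Gamma M)$ is the cellular chain complex over $\pi_0(R)$; crucially this description depends only on $G_m$ as a $\pi_0(R)$-module.

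It then remains to identify $E^2$. The Eilenberg--Mac Lane $R$-module $HG_m$ --- with $\pi_0 = G_m$ and all other homotopy groups zero, obtained via $R \to H\pi_0(R)$ and \cite[Section~IV.2]{EKMM} as recalled in Remark~\ref{postcomalg}, and unique by \eqref{uniquems} --- is connective, so running the construction above with $G$ replaced by $HG_m$ yields a strongly convergent spectral sequence with $E^1_{n,j} \cong \bigoplus_{J_n} \pi_j(HG_m)$, concentrated on the line $j = 0$, where it equals $\bigoplus_{J_n} G_m$ with the same cellular differential. For degree reasons $d^r = 0$ for $r \geq 2$, so it collapses and $(HG_m)^R_n M = \pi_n(HG_m \wedge_R^L M) = H_n\bigl(C^{\mathrm{cell}}_\ast(\Gamma M) \otimes_{\pi_0(R)} G_m\bigr)$. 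Since the right-hand side is precisely $H_n(E^1_{\ast,m}, d^1) = E^2_{n,m}$ for the spectral sequence of the first paragraph, we conclude $E^2_{n,m} = (HG_m)^R_n M$, which is the claim.

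The step I expect to be the main obstacle is the explicit identification of $d^1$ with the cellular boundary operator acting on $G_m$, equivalently the construction of a natural isomorphism of $E^1$-complexes between the $G$-tower and the $HG_m$-tower. This amounts to a careful analysis of the connecting maps in the cofiber sequences $M_{n-1}/M_{n-2} \to M_n/M_{n-2} \to M_n/M_{n-1}$, keeping track of the sign appearing in the definition of $\partial$ in Lemma~\ref{ss} and of the orientation conventions fixed in Remark~\ref{or}, and of the fact that all these maps factor through $\pi_0(R)$-module data. The remaining ingredients --- existence of a CW approximation with cells in dimensions $\geq 0$, preservation of cofibrations under $G \wedge_R(-)$, and strong convergence from Lemma~\ref{ss} --- are routine given the hypotheses that $R$ is a connective cofibrant commutative $S$-algebra and that $M$ is connective.
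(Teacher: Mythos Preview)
Your proposal is correct and follows essentially the same approach as the paper: both construct the spectral sequence by applying Lemma~\ref{ss} to the tower $G \wedge_R (\Gamma M)^n$ for a CW $R$-module approximation $\Gamma M$ with no cells in negative dimensions, identify the $E^1$-page via the K\"unneth map for the wedge of spheres $(\Gamma M)^n/(\Gamma M)^{n-1}$, and then compute $E^2$ by running the identical construction with $HG_m$ in place of $G$ and observing that the latter spectral sequence collapses for degree reasons. The paper is slightly more explicit about the sign $(-1)^{n+m}$ in the identification of $d^1$ (arising from the sign built into $\partial$ in Lemma~\ref{ss}), which differs from the sign $(-1)^n$ in the $HG_m$-spectral sequence by the harmless row-constant factor $(-1)^m$; you correctly flagged this bookkeeping as the main point requiring care.
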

\begin{proof}
This is Theorem IV.3.7 in \cite{EKMM}. Since we will need it later, we explain the proof in more detail. 
We replace $G$ by a cell approximation and denote this approximation again by $G$. 
Since $R$ is connective there is a CW $R$-module $\Gamma M$ and a weak equivalence of $R$-modules $\Gamma M \to M$ (\cite[Theorem III.2.10]{EKMM}). Because $M$ is connective we can assume that the $n$-skeleton $\Gamma M^n$ is  $\ast$ for $n < 0$ (\cite[proof of Theorem IV.3.6]{EKMM}). 
The $(n+1)$-skeleton $\Gamma M^{n+1}$ is built from the $n$-skeleton by attaching $(n+1)$-cells. Since the coproduct of cofibrations is a cofibration and since cofibrations are stable under cobase change,  $\Gamma M^n \to \Gamma M^{n+1}$ is a cofibration. Because $G \wedge_R (-)$ is left adjoint,
it preserves cofibrations. Hence, 
\[G \wedge_R \Gamma M^n \to G \wedge_R \Gamma M^{n+1}\] is a cofibration, too. 
Therefore, Lemma \ref{ss} gives a strongly convergent spectral sequence of the form
\[ E^1_{n,m} = \pi_{n+m}(G \wedge_R \Gamma M^n/{\Gamma M^{n-1}}) \Longrightarrow \pi_{n+m}(\colim_n G \wedge_R \Gamma M^n).\]
We have  $\colim_n G \wedge_R \Gamma M^n = G \wedge_R \colim_n \Gamma M^n = G \wedge_R \Gamma M$. Thus, it remains to identify the $E^2$-page.  The quotient  $\Gamma M^n/{\Gamma M^{n-1}}$ is a wedge of $n$-spheres $S^n_R$, so
by \cite[Proposition III.3.9]{EKMM} the K\"unneth map  induces an isomorphism
\[ G_m \otimes_{R_0} \pi_n(\Gamma M^n/\Gamma M^{n-1}) \to \pi_{n+m}(G \wedge_R \Gamma M^n/\Gamma M^{n-1}).\] 
Since the  K\"unneth map is compatible with the  suspension, the differential $d^1: E^1_{n,m} \to E^1_{n-1,m}$ identifies with $(-1)^{n+m}$ times the map
\begin{align} \label{d}
\begin{split}
 G_m \otimes_{R_0} \pi_n(\Gamma M^n/{\Gamma M^{n-1}}) & \xrightarrow{\cong}  G_m \otimes_{R_0} \pi_n(C)  \xrightarrow{\id \otimes d}  G_m \otimes_{R_0} \pi_{n-1}(\Gamma M^{n-1}) \\
  & \xrightarrow{}  G_m \otimes_{R_0} \pi_{n-1}(\Gamma M^{n-1}/\Gamma M^{n-2}), 
  \end{split}
 \end{align}
 where $C = C(\Gamma M^{n-1} \to \Gamma M^n)$.  We now fix $m$. 
 The above argument shows that there is a strongly convergent spectral sequence 
 \[ _mE^1_{n,k}  \Longrightarrow (HG_m)_{n+k}^R(M) \]
 whose $E^1$-page is given by
 \[ _mE^1_{n,k} \cong \begin{cases} 
                     0, & \text{~if~} k \neq 0; \\
                     G_m \otimes_{R_0}  \pi_n(\Gamma M^n/{\Gamma M^{n-1}}), & \text{~if~} k = 0;
                     \end{cases}\]
    and whose differential $d^1: \ _mE^1_{n,0} \to \ _mE^1_{n-1, 0}$  is $(-1)^n$ times the map (\ref{d}). 
    Since this  spectral sequence has to collapse at the $E^2$-page  we get
    \[E^2_{n,m} \cong \ _mE^2_{n,0} \cong (HG_m)_n^R(M).\qedhere\]
    \end{proof}

\begin{lem} \label{natati}
Let $R$ be a connective cofibrant commutative $S$-algebra. Let $M \to \bar{M}$  and $G \to \bar{G}$ be morphisms in $\mathscr{D}_R$, where $M$ and $\bar{M}$ are connective.  Then, we have a morphism of Atiyah-Hirzebruch spectral sequences between the spectral sequence converging to  $G^R_{*}M$ and the spectral sequence converging to $\bar{G}^R_*\bar{M}$. The map $G^R_{*}M \to \bar{G}^R_*\bar{M}$ respects the filtrations and the induced map on the associated graded identifies with the induced map on $E^{\infty}$-pages. On the $E^2$-pages the map is the obvious one. 
\end{lem}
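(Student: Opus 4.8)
The plan is to build the morphism of spectral sequences at the level of the towers of cofibrations that produce the Atiyah--Hirzebruch spectral sequences via Lemma \ref{ss}, and then invoke the naturality built into that lemma. Concretely, recall from the proof of Theorem \ref{AHss} that after cell approximation we have CW $R$-modules $\Gamma M$ and $\Gamma \bar M$ with skeletal filtrations $\Gamma M^n$, $\Gamma \bar M^n$, and cell approximations $G$, $\bar G$, and the spectral sequences come from the towers $G \wedge_R \Gamma M^n$ and $\bar G \wedge_R \Gamma \bar M^n$. The first step is to lift the maps $M \to \bar M$ and $G \to \bar G$ in $\mathscr D_R$ to honest maps of $R$-modules: replace $G \to \bar G$ by a map of cell $R$-modules and, using the cellular approximation theorem for CW $R$-modules (\cite[Theorem III.2.10, Theorem III.3.5]{EKMM}), choose a cellular map $\Gamma M \to \Gamma \bar M$ representing $M \to \bar M$, i.e.\ one carrying $\Gamma M^n$ into $\Gamma \bar M^n$. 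Smashing gives a map of towers $G \wedge_R \Gamma M^{\LargerCdot} \to \bar G \wedge_R \Gamma \bar M^{\LargerCdot}$ compatible with the cofibration structure.

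Next I would feed this map of towers into the naturality of the construction in Lemma \ref{ss}: a map of towers of cofibrations induces a map of the associated unrolled exact couples (it commutes with $i$, $j$, and hence, using the compatibility of the boundary map in the long exact sequence with maps of cofiber sequences, with $\partial$ including its sign $(-1)^m$), and therefore a map of spectral sequences $E^*_{*,*}(G \wedge_R \Gamma M^{\LargerCdot}) \to E^*_{*,*}(\bar G \wedge_R \Gamma \bar M^{\LargerCdot})$. On $E^1$-pages this is the map $\pi_{n+m}(G \wedge_R \Gamma M^n/\Gamma M^{n-1}) \to \pi_{n+m}(\bar G \wedge_R \Gamma \bar M^n/\Gamma \bar M^{n-1})$. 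To identify the induced map on $E^2$ with ``the obvious one'' I would use the K\"unneth identification from the proof of Theorem \ref{AHss}: under the isomorphisms $E^1_{n,m} \cong G_m \otimes_{R_0} \pi_n(\Gamma M^n/\Gamma M^{n-1})$ the map on $E^1$ becomes $(G \to \bar G)_m \otimes (\text{cellular chain map})_n$, which is exactly the map of cellular chain complexes computing $(HG_m)^R_*(M) \to (H\bar G_m)^R_*(\bar M)$; passing to homology gives the stated map on $E^2$. The convergence and filtration statement follows because, in the proof of Lemma \ref{ss}, the filtration on $\pi_{*}(\colim)$ is the image filtration $F_s = \im(\pi_*(X_s) \to \pi_*(\colim X_i))$, which is visibly natural in maps of towers, and the colimit comparison map $\colim_n (G \wedge_R \Gamma M^n) \simeq G \wedge_R \Gamma M$ is natural; that the induced map on associated graded is the map on $E^{\infty}$-pages is built into the general formalism of \cite[Theorem 6.1, Lemma 5.6]{Boar}.

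The main obstacle I expect is the choice of a \emph{cellular} representative for $M \to \bar M$ together with checking that it is well-defined enough for the conclusion: different cellular choices differ by a cellular homotopy, which does not obviously fix the map on $E^1$-pages on the nose, but it does fix it from the $E^2$-page onward, and the statement only asserts ``on the $E^2$-pages the map is the obvious one'', so this is harmless provided I argue that a cellular homotopy induces the zero map on the $d^1$-homology. A second minor point to be careful about is the sign bookkeeping: the $d^1$-differential carries a sign $(-1)^{n+m}$ and the boundary map a sign $(-1)^m$, so one must check the lifted map of towers intertwines these signs --- but since the same sign conventions are used on both sides and the map of cofiber sequences is strictly compatible with the boundary maps of Remark \ref{or}, the signs match automatically. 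I do not anticipate difficulty beyond carefully stringing together the naturality already packaged in Lemma \ref{ss}, Lemma \ref{compLES}, and the K\"unneth identification of Theorem \ref{AHss}.
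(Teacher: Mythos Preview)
Your proposal is correct and follows exactly the approach of the paper: the paper's proof is the single line ``This follows easily from the fact that maps between CW modules over a connective $S$-algebra are homotopic to cellular maps \cite[Theorem III.2.9]{EKMM}'', and you have simply unpacked what that entails (lift to a cellular map, get a map of towers, invoke naturality of the exact couple and the K\"unneth identification). Your remarks on well-definedness up to cellular homotopy and on sign bookkeeping are more than the paper spells out, but they are the right checks to make.
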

\begin{proof}
This follows easily from the fact that maps between CW modules over a connective $S$-algebra are homotopic to cellular maps \cite[Theorem III.2.9]{EKMM}. 
\end{proof}

\begin{lem} \label{edge2}
In the situation of  Theorem \ref{AHss} we have the following: If $G$ is connective, the edge homomorphism 
\[\pi_n(G \wedge_R^L M) = F_n^n \to F^n_n/{F^n_{n-1}} \xrightarrow{\cong} E^{\infty}_{n,0} \hookrightarrow E^2_{n,0} \cong \pi_n(HG_0 \wedge_R^L M)\]
is the map induced by $G \to HG_0$.
\end{lem}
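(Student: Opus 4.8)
The plan is to trace through the explicit construction of the Atiyah-Hirzebruch spectral sequence given in the proof of Theorem \ref{AHss} and match it against the map of spectral sequences induced by $G \to HG_0$ via Lemma \ref{natati}. First I would recall that, with $G$ connective, we have a map $G \to HG_0$ in $\mathscr{D}_R$ realizing the identity on $\pi_0$ (as in Remark \ref{postcomalg}), and that $HG_0$ viewed as an $R$-module has $\pi_*(HG_0 \wedge_R^L M)$ as the target $E^2$-page of the Atiyah-Hirzebruch spectral sequence for the pair $(HG_0, M)$ — but since $HG_0$ is already an Eilenberg-Mac Lane module, that second spectral sequence is concentrated in the column $m = 0$ and collapses, so $\pi_n(HG_0 \wedge_R^L M)$ is literally its own $E^2_{n,0}$. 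By Lemma \ref{natati} applied to $G \to HG_0$ and $\id_M$, the map on $E^2_{n,0}$-pages is "the obvious one", i.e. the map $(HG_0)_n^R M \to (HG_0)_n^R M$ is the identity, and the filtration-respecting map $\pi_*(G \wedge_R^L M) \to \pi_*(HG_0 \wedge_R^L M)$ induces on associated graded the induced map on $E^\infty$-pages.

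Next I would identify the edge homomorphism concretely. Using the cell structure $\Gamma M$ on $M$ with $\Gamma M^n = \ast$ for $n < 0$, the filtration $F^n_s$ on $\pi_n(G \wedge_R^L M) = \pi_n(G \wedge_R \Gamma M)$ is by images of $\pi_n(G \wedge_R \Gamma M^s)$. The top filtration piece is $F^n_n = \pi_n(G \wedge_R \Gamma M)$ itself (since $\Gamma M^n \to \Gamma M$ is $n$-connected and $G$ is connective, the map $\pi_n(G\wedge_R \Gamma M^n) \to \pi_n(G \wedge_R\Gamma M)$ is surjective — this is the standard convergence/connectivity fact). The quotient $F^n_n/F^n_{n-1}$ is then $E^\infty_{n,0}$, which is a subquotient (in fact just a quotient, since $d^r$ out of the bottom row vanishes for degree reasons and $E^\infty_{n,0} \hookrightarrow E^2_{n,0}$) of $E^2_{n,0} = \pi_n(HG_0 \wedge_R^L M)$.

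The key step is then to verify that, under the isomorphism $E^2_{n,0} \cong \pi_n(HG_0 \wedge_R^L M)$ coming from the collapse of the auxiliary spectral sequence $_mE^1$ in the proof of Theorem \ref{AHss} (for $m = 0$), the composite $\pi_n(G \wedge_R^L M) \to E^\infty_{n,0} \hookrightarrow E^2_{n,0}$ agrees with the map $\pi_n(G \wedge_R^L M) \to \pi_n(HG_0 \wedge_R^L M)$ induced by $G \to HG_0$. This is precisely what Lemma \ref{natati} delivers: the map induced by $G \to HG_0$ on $E^\infty$-pages is the same as the map on associated gradeds of the filtration, and on the target side the filtration is trivial (concentrated in filtration $n$ in total degree $n$ since the target spectral sequence is concentrated in the $0$-th row), so the $E^\infty_{n,0}$ of the target is all of $\pi_n(HG_0 \wedge_R^L M)$; chasing the definitions of the filtration-comparison isomorphisms (as in \cite[Lemma 5.6]{Boar}, invoked already in Lemma \ref{convmult}) shows the edge map is the asserted one.

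The main obstacle I anticipate is bookkeeping: making sure that the identification $E^2_{n,0} \cong \pi_n(HG_0 \wedge_R^L M)$ used in the statement of the edge map is exactly the one produced by the auxiliary collapsing spectral sequence argument in Theorem \ref{AHss}, and that the naturality statement of Lemma \ref{natati} is being applied to the correct map of pairs (namely $(G, M) \to (HG_0, M)$, not involving any change of $M$). Once those identifications are pinned down, the argument is a short diagram chase with no real computation; I would present it in two or three sentences citing Lemma \ref{natati} and the structure of the proof of Theorem \ref{AHss}.
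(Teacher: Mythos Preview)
Your proposal is correct and follows essentially the same approach as the paper: apply Lemma \ref{natati} to the pair of morphisms $G \to HG_0$ and $\id_M$, and observe that the Atiyah-Hirzebruch spectral sequence for $(HG_0, M)$ is concentrated in the row $m=0$ so that its edge homomorphism is the identity. The paper's proof is exactly this two-sentence argument; your additional remarks about connectivity and the auxiliary spectral sequence $_mE^1$ are correct elaborations but not strictly needed.
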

\begin{proof}
This follows by applying Lemma \ref{natati}  to the morphisms $M \xrightarrow{\id} M$ and  $G \to HG_0$ and by observing that the edge homomorphism of the Atiyah-Hirzebruch spectral sequence converging to $\pi_*(HG_0 \wedge_R^L M)$ is the identity. 
\end{proof}
 \begin{lem} \label{natatichanger}
 Let $R \to R'$ be a  morphism of connective cofibrant commutative $S$-algebras.  Let $M$ be a connective $R'$-module and let $G$ be an $R'$-module. Then, we  a morphism of Atiyah-Hirzebruch spectral sequences converging to the  map $G^R_*M \to G^{R'}_*M$. On the $E^2$-pages the map is given by the canonical maps $(HG_m)^R_*M \to (HG_m)^{R'}_*M$. Here, note that we can realize $HG_m$ as $H\pi_0(R')$-module and consider it as an $R$-module via the map $R \to R' \to H\pi_0(R')$, and  we can realize $HG_m$ as $H\pi_0(R)$-module and consider it as an $R$-module via $R \to H\pi_0(R)$. These two $R$-modules  are isomorphic by  (\ref{uniquems}) .
 \end{lem}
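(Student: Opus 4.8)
The plan is to reduce the statement to Lemma~\ref{natati} by a change of rings. Write $\phi\colon R\to R'$ for the given map and, whenever needed, regard $G$ and $M$ as $R$-modules by restriction along $\phi$. Fix a CW $R$-module approximation $\Gamma^R M\to M$ with all cells in dimension $\geq 0$, as in the proof of Theorem~\ref{AHss}. Since $R'\wedge_R(-)\colon\mathscr{M}_R\to\mathscr{M}_{R'}$ is a left adjoint taking $S^n_R$ to $S^n_{R'}$, the $R'$-module $M'\coloneqq R'\wedge_R\Gamma^R M$ is a CW $R'$-module with skeleta $(M')^n=R'\wedge_R(\Gamma^R M)^n$; it has cells in dimensions $\geq 0$, hence is connective, and because $\Gamma^R M$ is a cofibrant $R$-module it represents $R'\wedge_R^L M$ in $\mathscr{D}_{R'}$. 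The composite of $\id\wedge(\Gamma^R M\to M)$ with the action map gives a map of $R'$-modules $M'=R'\wedge_R\Gamma^R M\to R'\wedge_R M\to M$, which under $R'\wedge_R^L M\cong M'$ represents the canonical map $R'\wedge_R^L M\to M$ (the derived counit of extension of scalars); recall that $G^R_*M\to G^{R'}_*M$ is, via the change-of-rings isomorphism $G\wedge_R^L M\cong G\wedge_{R'}^L(R'\wedge_R^L M)$, precisely $\pi_*$ of this counit smashed with $G$.

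First I would identify the Atiyah-Hirzebruch spectral sequence of Theorem~\ref{AHss} for $G^R_*M$ with the one obtained by applying Theorem~\ref{AHss} over $R'$ to the pair $(G,\,R'\wedge_R^L M)$, built from the CW $R'$-module $M'$. Fixing a cell $R'$-module approximation $\Gamma^{R'}G\to G$, the defining tower of the latter is $\{\Gamma^{R'}G\wedge_{R'}(M')^n\}_n$, and the change-of-rings isomorphism $\Gamma^{R'}G\wedge_{R'}(R'\wedge_R X)\cong\Gamma^{R'}G\wedge_R X$, natural in the $R$-module $X$ and compatible with the cofibrations $(\Gamma^R M)^{n-1}\to(\Gamma^R M)^n$, identifies it with $\{\Gamma^{R'}G\wedge_R(\Gamma^R M)^n\}_n$. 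Since each $(\Gamma^R M)^n$ is a cofibrant $R$-module and $\Gamma^{R'}G\to G$ is a weak equivalence of $R$-modules, this tower is levelwise weakly equivalent, over $G$, to a tower $\{\Gamma^R G\wedge_R(\Gamma^R M)^n\}_n$ with $\Gamma^R G$ a cell $R$-approximation of $G$; hence the two spectral sequences agree from the $E^1$-page on, using that the spectral sequence of Theorem~\ref{AHss} is independent, up to canonical isomorphism, of the chosen cell approximations (cf.\ Lemma~\ref{natati}).

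Next I would invoke Lemma~\ref{natati} over $R'$ for the morphism $R'\wedge_R^L M\to M$ of connective $R'$-modules together with $\id_G$: since $M'$ is a CW $R'$-module, the map $M'\to M$ above lifts, up to homotopy, to an $R'$-module map $f\colon M'\to\Gamma^{R'}M$, where $\Gamma^{R'}M\to M$ is a CW $R'$-approximation; by cellular approximation over the connective $S$-algebra $R'$ \cite[Theorem III.2.9]{EKMM} we may take $f$ cellular, so it induces a map of towers of cofibrations $\{(M')^n\}\to\{(\Gamma^{R'}M)^n\}$, and smashing with $\Gamma^{R'}G\wedge_{R'}(-)$ and applying Lemma~\ref{ss} gives the morphism of spectral sequences. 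Composing with the identification of the previous paragraph yields the asserted morphism from the Atiyah-Hirzebruch spectral sequence for $G^R_*M$ to that for $G^{R'}_*M$. That on the abutment it is the canonical map $G^R_*M\to G^{R'}_*M$, respecting the filtrations with induced map on associated graded the map of $E^\infty$-pages, follows from Lemma~\ref{natati} and the identification $\pi_*(G\wedge_{R'}^L(R'\wedge_R^L M))\cong\pi_*(G\wedge_R^L M)=G^R_*M$.

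Finally, on $E^2$-pages Lemma~\ref{natati} gives the evident map $(HG_m)^{R'}_*(R'\wedge_R^L M)\to(HG_m)^{R'}_*M$, i.e.\ $\pi_*$ of $HG_m\wedge_{R'}^L(R'\wedge_R^L M)\to HG_m\wedge_{R'}^L M$ with $HG_m$ the $H\pi_0(R')$-module determined by the $\pi_0(R')$-module $\pi_m(G)$. Change of rings identifies the source with $(HG_m)^R_*M$, where $HG_m$ is viewed as an $R$-module via $R\to R'\to H\pi_0(R')$; by (\ref{uniquems}) this $R$-module is isomorphic to the one appearing in Theorem~\ref{AHss} (both have underlying $\pi_0(R)$-module $\pi_m(G)$, with the action factoring through $\pi_0(R)\to\pi_0(R')$), so the source is canonically $(HG_m)^R_*M$ in the sense of the statement and the $E^2$-map is the claimed one. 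I expect the main difficulty to lie entirely in the second paragraph --- matching up the two towers and the two choices of cell approximation of $G$ so that the identification is genuinely canonical --- while the remaining steps are direct applications of Theorem~\ref{AHss}, Lemma~\ref{natati}, Lemma~\ref{ss} and the change-of-rings isomorphisms already available.
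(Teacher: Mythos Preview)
Your argument is correct, and it takes a genuinely different route from the paper. The paper proceeds directly: it chooses CW-approximations $\Gamma^R M$ and $\Gamma^{R'}M$ and an $R$-module map $\Gamma^R M\to\Gamma^{R'}M$ lifting the identity, and then proves by hand (imitating the cellular-approximation theorem) that this map may be deformed in $\mathscr{M}_R$ to one respecting the skeletal filtrations; the required input is only that $\pi_n\bigl((\Gamma^{R'}M)^n\to\Gamma^{R'}M\bigr)=0$ for $n\geq1$ and that $(\Gamma^{R'}M)^0\to\Gamma^{R'}M$ is surjective on $\pi_0$. Smashing the resulting filtered map with $\Gamma^R G\to\Gamma^{R'}G$ and passing along $\wedge_R\to\wedge_{R'}$ gives the map of exact couples, and the paper then identifies the induced map on $E^1$ explicitly as $G_m\otimes_{R_0}\pi_n\bigl((\Gamma^R M)^n/(\Gamma^R M)^{n-1}\bigr)\to G_m\otimes_{R'_0}\pi_n\bigl((\Gamma^{R'}M)^n/(\Gamma^{R'}M)^{n-1}\bigr)$, from which the $E^2$-description follows by running the same identification for $HG_m$.

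Your approach instead factors the change-of-rings map through the derived counit $R'\wedge_R^L M\to M$: you identify the $R$-Atiyah--Hirzebruch spectral sequence with the $R'$-one for $(G,\,R'\wedge_R^L M)$ using the CW $R'$-module $R'\wedge_R\Gamma^R M$ and the base-change isomorphism, and then invoke Lemma~\ref{natati} over $R'$ for the counit. This is cleaner in that it recycles Lemma~\ref{natati} and avoids the cross-ring cellular-approximation argument entirely; the price is that the $E^2$-identification is slightly less explicit, since one must check that the tower identification of your second paragraph induces on $E^2$ exactly the change-of-rings isomorphism $(HG_m)^R_*M\cong(HG_m)^{R'}_*(R'\wedge_R^L M)$ (which it does, by running the same tower comparison with $G$ replaced by $HG_m$). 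The paper's direct approach, by contrast, makes the $E^1$-map visible as a map of tensor products, which feeds straightforwardly into the later applications.
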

 \begin{proof}
 Let $\Gamma^RG$ and $\Gamma^{R'}G$ be the cell  approximations of $G$ used in the construction  of the spectral sequences and  let $\Gamma^R M$ and  $\Gamma^{R'}M$ be the CW-approximation of $M$ used in the construction of the spectral sequences.  Let $\Gamma^RG \to  \Gamma^{R'}G$ and  $\Gamma^R M \to \Gamma^{R'}M$ be  maps in $\mathscr{M}_R$ lifting the identity maps up to homotopy. 
  Because of $(\Gamma^RM)^n = * = (\Gamma^{R'}M)^n$ for $n < 0$,  one can show, as in the proof of the existence of cellular approximations of maps between CW-complexes (see \cite[Theorem 8.5.4]{Dieckbook}, \cite[Lemma II. 3.2]{whi}), that $\Gamma^RM \to \Gamma^{R'}M$ is homotopic in $\mathscr{M}_R$ to a map that respects the skeleton filtrations of $\Gamma^RM$ and $\Gamma^{R'}M$.  
 For this note that the only properties of the sequence  of cofibrations of $R$-modules
 \[  \cdots \to (\Gamma^{R'}M)^{n-1} \to (\Gamma^{R'}M)^n \to (\Gamma^{R'}M)^{n+1} \to \cdots \to \Gamma^{R'}M \] 
 that are needed in the proof are that the $n$-th  homotopy group of the $R$-module homomorphism $(\Gamma^{R'} M)^n\to \Gamma^{R'}M$ is zero for $n \geq 1$ and that $\pi_0((\Gamma^ {R'}M)^0) \to \pi_0(\Gamma^{R'} M)$ is surjective. 
 Both follows from the fact that $\pi_n(\Gamma^{R'}M/(\Gamma^{R'}M)^n) = 0$ by \cite[p.57]{EKMM}.

 It is clear that the maps
 \[\begin{tikzcd}
 \Gamma^RG \wedge_R (\Gamma^R M)^n \ar{r} & \Gamma^{R'}G \wedge_R (\Gamma^{R'} M)^n \ar{r} & \Gamma^{R'}G \wedge_{R'} (\Gamma^{R'} M)^n
 \end{tikzcd}\]
  induce a morphism of unrolled exact couples and therefore a map of spectral sequences which converges to the  map $G^R_*M \to G^{R'}_*M$. 
  Note that  map
  \[\begin{tikzcd}
  \pi_{n+m}\bigl((\Gamma^R G \wedge_R (\Gamma^RM)^n/{(\Gamma^RM)^{n-1}}\bigr) \ar{r} &  \pi_{n+m}\bigl((\Gamma^{R'} G \wedge_{R'} (\Gamma^{R'}M)^n/{(\Gamma^{R'}M)^{n-1}}\bigr)
  \end{tikzcd} \]
  identifies with the map
  \[\begin{tikzcd}
  G_m \otimes_{R_0} \pi_n\bigl((\Gamma^RM)^n/{(\Gamma^RM)^{n-1})}  \ar{r} & G_m \otimes_{R'_0} \pi_n\bigl((\Gamma^{R'}M)^n/{(\Gamma^{R'}M)^{n-1}\bigr)}.
  \end{tikzcd}\]
Applying the proven statements as well  to the $R'$-modules $HG_m$, one gets the identification of the map of spectral sequences on $E^2$-pages.  For this also note that the isomorphism between the two realization of $HG_m$ induces a morphism of spectral sequences by Lemma \ref{natati}.
 \end{proof}
We now prove the  multiplicativity of the Atiyah-Hirzebruch spectral sequence: 
\begin{prop} \label{AHssM}
Let $R$ be a connective cofibrant commutative $S$-algebra. 
Let $M$, $N$ and $L$ be  connective $R$-modules and  let $G$ be an $R$-module. Let $_M E^*_{*,*}$, $_NE^*_{*,*}$ and $_LE^*_{*,*}$ be the  spectral sequences defined in Theorem \ref{AHss}.  Then  maps $G \wedge_R^L G \to G$ and $M \wedge_R^L N \to L$ in $\mathscr{D}_R$ induce  a pairing of spectral sequences 
\[ _M E^*_{*,*} \otimes_{\mathbb{Z}}  {_NE^*_{*,*}} \to   {_LE^*_{*,*}}  \]
that converges to the product
\[ \pi_*(G \wedge_R^L M) \otimes_{\mathbb{Z}} \pi_*(G \wedge_R^L N) \to \pi_*(G \wedge_R^L L).\]
\end{prop}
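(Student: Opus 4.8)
The plan is to exhibit the two given Atiyah--Hirzebruch spectral sequences, together with the one converging to $G^R_*L$, as the spectral sequences associated to a pairing of towers of cofibrations in the sense of Subsection \ref{Multsec}, and then to apply Corollary \ref{ssmult} and Lemma \ref{convmult}.

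First I would fix the data underlying the three spectral sequences exactly as in the proof of Theorem \ref{AHss}: replace $G$ by a cell approximation, still denoted $G$, and choose CW $R$-module approximations $\Gamma M \xrightarrow{\ \sim\ } M$, $\Gamma N \xrightarrow{\ \sim\ } N$ and $\Gamma L \xrightarrow{\ \sim\ } L$ whose $n$-skeleta are $\ast$ for $n < 0$. Then I would choose a map $\mu \colon G \wedge_R G \to G$ in $\mathscr{M}_R$ representing the given multiplication $G \wedge_R^L G \to G$, and, equipping $\Gamma M \wedge_R \Gamma N$ with its product CW structure (whose $p$-skeleton is $\bigcup_{i+j=p}\Gamma M^i \wedge_R \Gamma N^j$), a \emph{cellular} map $\nu \colon \Gamma M \wedge_R \Gamma N \to \Gamma L$ representing $M \wedge_R^L N \to L$; here I invoke, as in Lemma \ref{natati}, that maps between CW modules over a connective $S$-algebra are homotopic to cellular maps \cite[Theorem III.2.9]{EKMM}.

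Next I would set $X_n \coloneqq G \wedge_R \Gamma M^n$, $Y_k \coloneqq G \wedge_R \Gamma N^k$ and $W_p \coloneqq G \wedge_R \Gamma L^p$. Since $G \wedge_R (-)$ is a left adjoint it preserves cofibrations, so these are towers of cofibrations of $R$-modules with $X_n = Y_n = W_n = \ast$ for $n < 0$, and $\colim_n X_n = G \wedge_R \Gamma M$ represents $G \wedge_R^L M$ in $\mathscr{D}_R$ (likewise for $N$ and $L$). A pairing of towers $X_{\LargerCdot} \wedge_R Y_{\LargerCdot} \to W_{\LargerCdot}$ is given by the composites
\[
X_n \wedge_R Y_k = G \wedge_R \Gamma M^n \wedge_R G \wedge_R \Gamma N^k \xrightarrow{\ \tau\ } G \wedge_R G \wedge_R \Gamma M^n \wedge_R \Gamma N^k \xrightarrow{\ \mu\ } G \wedge_R \Gamma M^n \wedge_R \Gamma N^k \xrightarrow{\ \nu\ } G \wedge_R \Gamma L^{n+k} = W_{n+k},
\]
where $\tau$ is the symmetry isomorphism and the last arrow is defined because $\nu$ is cellular and $\Gamma M^n \wedge_R \Gamma N^k$ lies in the $(n+k)$-skeleton of $\Gamma M \wedge_R \Gamma N$, so its image under $\nu$ is contained in $\Gamma L^{n+k}$. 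The required compatibility squares of Subsection \ref{Multsec} commute on the nose, being built from skeletal inclusions and $\nu$. Corollary \ref{ssmult} now yields the pairing of spectral sequences $\star$, satisfying the Leibniz rule and compatible with passage to homology on each page; by Lemma \ref{ss} all three converge strongly. For convergence of the pairing I would check that the product induced on colimits is, by construction of $\mu$ and $\nu$ together with the lax monoidality of $\pi_*$, the composite
\[
\pi_*(G \wedge_R^L M) \otimes_{\mathbb{Z}} \pi_*(G \wedge_R^L N) \to \pi_*(G \wedge_R^L M \wedge_R^L G \wedge_R^L N) \to \pi_*(G \wedge_R^L G \wedge_R^L M \wedge_R^L N) \to \pi_*(G \wedge_R^L L)
\]
of the statement, and then invoke Lemma \ref{convmult}.

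It remains to identify the pairing on the $E^2$-page. One unwinds the Künneth isomorphisms $G_m \otimes_{R_0} \pi_n(\Gamma M^n/\Gamma M^{n-1}) \cong \pi_{n+m}(G \wedge_R \Gamma M^n/\Gamma M^{n-1})$ from the proof of Theorem \ref{AHss} and matches the intrinsically defined product $\star$ on $E^1$ (built from \eqref{pr1} and \eqref{smiso}) with the evident tensor product of Künneth classes; the collapse argument of Theorem \ref{AHss} then shows the induced pairing on $E^2$ is the map $(HG_m)^R_*M \otimes_{R_0} (HG_{m'})^R_*N \to (HG_{m+m'})^R_*L$ induced by the multiplication $\mu$ on the graded $R_0$-module $G_* = \pi_*(G)$ and by $\nu \simeq (M \wedge_R^L N \to L)$. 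This last step — reconciling the orientation and sign conventions of Remark \ref{or} and the Künneth maps used in Theorem \ref{AHss} with the abstract $\star$ of Subsection \ref{Multsec}, and verifying that the product CW structure on $\Gamma M \wedge_R \Gamma N$ is a genuine CW $R$-module structure so that cellular approximation applies — is the main obstacle; everything else is formal once the pairing of towers is in place.
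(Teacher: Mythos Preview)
Your approach is essentially the same as the paper's: replace $G$ by its cell approximation so that the multiplication is represented by an honest map $G \wedge_R G \to G$, represent $M \wedge_R^L N \to L$ by a cellular map $\Gamma M \wedge_R \Gamma N \to \Gamma L$, build the pairing of towers $(G \wedge_R \Gamma M^n) \wedge_R (G \wedge_R \Gamma N^k) \to G \wedge_R \Gamma L^{n+k}$ via the symmetry and these two maps, and then invoke Corollary~\ref{ssmult} and Lemma~\ref{convmult}. Your final paragraph on the $E^2$-identification is extra---the statement does not ask for it and the paper's proof does not address it here; your worry about the product CW structure on $\Gamma M \wedge_R \Gamma N$ is legitimate but standard in the EKMM setting (smash over $R$ distributes over the colimits and tensors defining cells, so the product filtration is a CW filtration), and once that is accepted cellular approximation applies exactly as you say.
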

\begin{proof}
Since the spectral sequences were constructed using a cell approximation of $G$, we can assume that the product $G \wedge_R^L G \to G$  in $\mathscr{D}_R$  can be represented by a map $G \wedge_R G \to G$ in $\mathscr{M}_R$.  Let $\Gamma M$, $\Gamma N$ and $\Gamma L$ be the CW-approximations of $M$, $N$ and $L$ used in the definition of the spectral sequences. Then,  the map $M \wedge_R^L N \to L$ can be represented by a  map
\[ \Gamma M \wedge_R \Gamma N \to \Gamma L\]
in $\mathscr{M}_R$ that is cellular. 
We define maps  $(G \wedge_R \Gamma M^n) \wedge_R ( G \wedge_R \Gamma N^k) \to G \wedge \Gamma L^{n+k}$
by 
\begin{align*}
(G \wedge_R \Gamma M^n) \wedge_R ( G \wedge_R \Gamma N^k) & \xrightarrow{\cong}  (G \wedge_R G) \wedge_R (\Gamma M^n \wedge_R \Gamma N^k) \\ &  \to G \wedge (\Gamma M \wedge_R \Gamma N)^{n+k} 
 \to G \wedge_R \Gamma L^{n+k}.
\end{align*}
Since the  diagrams 
\[\begin{tikzcd}
\Gamma M^{n-1} \wedge_R \Gamma N^k \ar{r} \ar{d} & \Gamma M^n \wedge_R \Gamma N^k \ar{d}  & \Gamma M^n \wedge_R \Gamma N^{k-1} \ar{l} \ar{d} \\
(\Gamma M \wedge_R  \Gamma N)^{n+k-1} \ar{r} & (\Gamma  M \wedge_R \Gamma N)^{n+k} & (\Gamma M \wedge_R \Gamma N)^{n+k-1} \ar{l}  
\end{tikzcd}\]
and 
\[ \begin{tikzcd}
(\Gamma M \wedge_R \Gamma N)^{n+k-1} \ar{r} \ar{d} & (\Gamma M \wedge_R \Gamma N)^{n+k} \ar{d} \\
\Gamma L^{n+k-1} \ar{r} & \Gamma L^{n+k}  
\end{tikzcd}\]
commute in $\mathscr{M}_R$, the proposition follows from Corollary  \ref{ssmult} and  Lemma \ref{convmult}.
\end{proof}
\section{The Brun spectral sequence} \label{Brunsubsec}
The goal of this section is to apply the results of the previous section to  construct a generalization of  the following spectral sequence of  Brun (see \cite[Theorem 6.2.10]{Brun}):

\begin{thm}[Brun]
Let $R \to R_1$ be a ring homomorphism of commutative rings. 
There is a multiplicative spectral sequence of the form
\[E^2_{n,m} = \pi_n\THH\bigl(HR_1, H\Tor^R_m(R_1,R_1)\bigr) \Longrightarrow \pi_{n+m}\THH(HR, HR_1).\]
\end{thm}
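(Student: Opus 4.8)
The plan is to obtain Brun's theorem as the special case $E = S$, $A = HR$, $B = HR_1$ of the generalized Brun spectral sequence proved above. First I would use Remark \ref{postcomalg} to realize $HR$ as a cofibrant commutative $S$-algebra and, by applying $H(-)$ to the ring homomorphism $R \to R_1$, obtain a map $HR \to HR_1$ of commutative $S$-algebras; replacing $HR_1$ by a weakly equivalent cofibrant object of $\mathscr{C}\mathscr{A}_{HR}$, I may take $B \coloneqq HR_1$ to be a connective cofibrant commutative $HR$-algebra with $\pi_0(B) = R_1$ and with $HR$-algebra structure induced by $R \to R_1$. The main theorem then supplies a strongly convergent, multiplicative spectral sequence
\[
E^2_{n,m} = \pi_n\THH\bigl(HR_1;\, HS_m(HR_1 \wedge_{HR} HR_1)\bigr) \Longrightarrow S_{n+m}\THH(HR; HR_1),
\]
and since $S \wedge (-)$ is naturally equivalent to the identity, $S_*(-) = \pi_*(-)$; thus the abutment is $\pi_{n+m}\THH(HR, HR_1)$, and $S_m(HR_1 \wedge_{HR} HR_1) = \pi_m(HR_1 \wedge^L_{HR} HR_1)$, where Lemma \ref{cof1en} is used to pass between the point-set and the derived relative smash product.

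Next I would identify $\pi_m(HR_1 \wedge^L_{HR} HR_1)$, as an $R_1$-module, with $\Tor^R_m(R_1, R_1)$. This is the degeneration of the Künneth spectral sequence \cite[Theorem IV.4.1]{EKMM}
\[
E^2_{s,t} = \Tor^{\pi_*(HR)}_{s,t}\bigl(\pi_*(HR_1), \pi_*(HR_1)\bigr) \Longrightarrow \pi_{s+t}(HR_1 \wedge^L_{HR} HR_1).
\]
Since $\pi_*(HR) = R$ and $\pi_*(HR_1) = R_1$ are concentrated in degree $0$, the $E^2$-page is concentrated on the line $t = 0$, where it equals $\Tor^R_s(R_1, R_1)$; hence the spectral sequence collapses and yields $\pi_m(HR_1 \wedge^L_{HR} HR_1) \cong \Tor^R_m(R_1, R_1)$ as graded $R_1$-algebras, in particular as $R_1$-modules in each fixed degree. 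The $HR_1$-module $HS_m(HR_1 \wedge_{HR} HR_1)$ occurring above is, by construction in the main theorem, an $H\pi_0(B) = HR_1$-module whose only nonzero homotopy group is $\Tor^R_m(R_1,R_1)$ in degree $m$; by the uniqueness of Eilenberg-Mac Lane module spectra, i.e. the isomorphism (\ref{uniquems}), it is isomorphic as an $HR_1$-module to $H\Tor^R_m(R_1, R_1)$. Substituting this into the $E^2$-term gives $\pi_n\THH\bigl(HR_1, H\Tor^R_m(R_1, R_1)\bigr)$, exactly Brun's input, while multiplicativity and (strong) convergence are inherited verbatim from the main theorem.

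The only genuine bookkeeping point — the place where one must be careful rather than invoke the general theorem as a black box — is matching the flavours of $\THH$: one has to check that the topological Hochschild homology with coefficients $\THH(HR; HR_1)$ and $\THH(HR_1; M)$ appearing in our statement agree with Brun's $\THH(HR, HR_1)$ and $\THH(HR_1, M)$, for instance by recalling that both are computed by the cyclic bar construction, so $\THH(HR; HR_1) \simeq HR_1 \wedge^L_{HR \wedge_S HR} HR$ with the $HR$-bimodule structure on $HR_1$ induced by $R \to R_1$, and similarly for the coefficients. I do not expect any real obstacle here: once this identification of models is in place, everything else is a direct specialization of the general result together with the standard computation of $\pi_*$ of a derived smash product of Eilenberg-Mac Lane spectra.
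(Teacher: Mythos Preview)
Your proposal is correct and follows precisely the route the paper indicates: the statement is Brun's original theorem, which the paper does not re-prove directly but presents as the motivating special case of its Theorem~\ref{BrSS}; as noted in the introduction, setting $E = S$ and taking $A \to B$ to be $HR \to HR_1$ recovers Brun's form, and your identification $\pi_m(HR_1 \wedge_{HR}^L HR_1) \cong \Tor^R_m(R_1,R_1)$ via the collapsing K\"unneth/Tor spectral sequence is exactly the expected bridge. The only addition is that you spell out the Tor identification and the Eilenberg--Mac Lane uniqueness step explicitly, which the paper leaves implicit.
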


We first recall two definitions of topological Hochschild homology.  For this, recall the definition of geometric realization.  

  \begin{rmk} \label{georeal}
  Let $X_{\LargerCdot}$ be a simplicial object in $\mathscr{M}_S$ or $\mathscr{S}$. Then, its geometric realization 
  $|X_{\LargerCdot}|$ is defined as the coend 
\[\int^{\Delta} X_q \wedge (\Delta_q)_+,\]
where $\Delta_q$ denotes the standard topological $q$-simplex. 
 Since tensors and colimits in $\mathscr{M}_S$ are created in $\mathscr{S}$ geometric realization in $\mathscr{M}_S$ is given 
  by geometric realization in  $\mathscr{S}$.
  Now, let $X_{\LargerCdot}$ be a simplicial commutative $S$-algebra. Then, by \cite[Proposition X.1.5]{EKMM} its geometric realization can be endowed with the structure of a commutative $S$-algebra. 
  We can also form the internal geometric realization 
  \[ \int^{\Delta} X_q \otimes \Delta_q   \, \, \in \mathscr{C}\mathscr{A}_S.\]
  By  \cite[Proposition VII.3.3]{EKMM} these two commutative $S$-algebras are isomorphic.

  
\end{rmk}
 \begin{defi}[$\THH$, first definition] \label{THH1}
 Let $A$ be a cofibrant commutative $S$-algebra. Let $B$ be an $(A,A)$-bimodule. 
The  simplicial $S$-module  $\THH(A;B)_{\LargerCdot}$ is defined  by
 \[ \THH(A; B)_n \coloneqq B \wedge_S \underbrace{ A \wedge_S \cdots \wedge_S A}_{n \text{~times~}}\]
 together with the usual face and degeneracy maps (see \cite[Definition IX.2.1]{EKMM}). 
 We define  \[\THH(A;B) \coloneqq |\THH(A;B)_{\LargerCdot}| \]  
We  write $\THH(A)$ for $\THH(A;A)$.  We denote the homotopy groups of $\THH(A;B)$ and $\THH(A)$ by $\THH_*(A;B)$ and $\THH_*(A)$. 
  \end{defi}


  \begin{defi}[$\THH$, second definition] \label{THH2}
  Let $A$ be a cofibrant commutative $S$-algebra and let $B$ be an $(A, A)$-bimodule. 
  Let $A^e = A \wedge_S A^{op}$ be the enveloping algebra of $A$.  We define 
  \[ \THH(A;B) = B \wedge_{A^e}^L A \in \mathscr{D}_{A^e}.\] 
  \end{defi}

  By \cite[Proposition IX.2.5]{EKMM} the image of $B \wedge_{A^e}^L A$ under $\mathscr{D}_{A^e} \to \mathscr{D}_S$ is isomorphic 
  to the image of $|n  \mapsto B \wedge_S  A^{\wedge_S n}|$ under $\mathscr{M}_S \to \mathscr{D}_S$, if $B$ is a cell $A^e$-module.

   We now study  algebra structures for both definitions of $\THH$ in the case  where $B$ is a commutative $A$-algebra. In Lemma \ref{compthh} we will see  that the isomorphism of 
    \cite[Proposition IX.2.5]{EKMM}  preserve multiplicative structures. 
 
 \begin{rmk}
  Let $A$ be a cofibrant commutative $S$-algebra.   Let  $B$ be a commutative $A$-algebra. Then, $[n] \mapsto B \wedge_S A^{\wedge_S n}$ is a simplicial commutative $S$-algebra. Thus, by Remark \ref{georeal}, its geometric realization is  an object in $\mathscr{C}\mathscr{A}_S$.  Moreover, the maps $B \to B \wedge_S A^{\wedge_S n}$ and $B \wedge_S A^{\wedge_S n} \to B$  endow $|[n] \mapsto B \wedge_S A^{\wedge_S n}|$  with the structure of an augmented commutative $B$-algebra.  Here, note that by the universal property of coends we have $|\underline{B}| \cong B$, where $\underline{B}$ denotes the constant simplicial object. 

Note that the $(A,A)$-bimodule  structures on $A$ and $B$ come from  the  $A^e$-algebra structures given by the maps of commutative $S$-algebras $A^e = A \wedge_S A \to A$ and 
\[( A^e \to A \to B) = (A^e \to B^e \to B).\] 
Since the smash product of two ring spectra is a ring spectrum we get  that $B \wedge_{A^e}^L A$ is an $A^e$-ring spectrum.
  \end{rmk}

\begin{lem} \label{compthh}
Let $A \to B$ be a morphism of cofibrant commutative $S$-algebras. 
Then the image of $B \wedge_{A^e}^L A$ under $\mathscr{D}_{A^e} \to \mathscr{D}_S$ 
and the image of $| [n] \mapsto B \wedge_S A^{\wedge_S n}|$ under $\mathscr{M}_S \to \mathscr{D}_S$ are isomorphic as $S$-ring spectra.  
\end{lem}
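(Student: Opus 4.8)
The plan is to upgrade the known $S$-module isomorphism of \cite[Proposition IX.2.5]{EKMM} to an isomorphism of $S$-ring spectra by checking that all maps involved in its construction respect the multiplicative structures. Recall that \cite[Proposition IX.2.5]{EKMM} identifies $B \wedge_{A^e}^L A$ with the geometric realization of the standard simplicial $S$-module $[n] \mapsto B \wedge_S A^{\wedge_S n}$ via a chain of quasi-isomorphisms: one replaces $A$ by a cofibrant $A^e$-module using the two-sided bar construction $\beta(A^e, A^e, A)_{\LargerCdot}$ (or rather $\beta(A, A^e, A)_{\LargerCdot}$ up to the standard identifications), so that $B \wedge_{A^e} \beta(A^e, A^e, A)_{\LargerCdot}$ computes the derived smash product, and then one uses the simplicial identity $B \wedge_{A^e} (A^e \wedge_S A^{\wedge_S n}) \cong B \wedge_S A^{\wedge_S n}$ level-wise, commuting $B \wedge_{A^e} (-)$ with geometric realization along the way.

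First I would observe that each of the objects in this chain is a simplicial commutative $S$-algebra, or at least a simplicial $S$-ring spectrum, when $B$ is a commutative $A$-algebra: the bar construction $\beta(A, A^e, A)_{\LargerCdot}$ is a simplicial commutative $S$-algebra (smash products of commutative $S$-algebras are commutative $S$-algebras, and the face/degeneracy maps are algebra maps), $B \wedge_{A^e} \beta(A, A^e, A)_{\LargerCdot}$ is then a simplicial commutative $B$-algebra by the pushout description of relative smash product in $\mathscr{C}\mathscr{A}_S$ recalled in the preliminaries, and the level-wise identification with $[n] \mapsto B \wedge_S A^{\wedge_S n}$ is an isomorphism of simplicial commutative $S$-algebras. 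Then I would invoke Remark \ref{georeal}: geometric realization of a simplicial commutative $S$-algebra in $\mathscr{M}_S$ carries a commutative $S$-algebra structure (by \cite[Proposition X.1.5]{EKMM}), and it is created from realization in $\mathscr{S}$, so each map in the chain realizes to a map of $S$-ring spectra. The one point requiring care is that the weak equivalence $\beta(A, A^e, A)_{\LargerCdot} \to \underline{A}$ (the extra-degeneracy augmentation) and the cofibrancy replacement used to compute $\wedge_{A^e}^L$ from $\wedge_{A^e}$ are compatible: here I would use Lemma \ref{cof1en} (or the discussion around it) together with the fact that $\Gamma^{A^e}(-) \wedge_{A^e} (-)$ represents the derived functor, so passing to $\mathscr{D}_S$ the realization of $B \wedge_{A^e} \beta(A, A^e, A)_{\LargerCdot}$ agrees with $B \wedge_{A^e}^L A$ as an $S$-ring spectrum.

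The main obstacle I expect is bookkeeping the multiplicative structure through the realization–smash-product interchange: one must check that the natural isomorphism $B \wedge_{A^e} |X_{\LargerCdot}| \cong |B \wedge_{A^e} X_{\LargerCdot}|$ (which holds because $B \wedge_{A^e} (-)$ is a left adjoint and hence commutes with the colimit defining geometric realization, using that tensors with spaces commute with $\wedge_{A^e}$ as the functor is enriched) is a map of $S$-ring spectra, i.e.\ that it intertwines the two $S$-algebra structures one gets from \cite[Proposition X.1.5]{EKMM}. This is a compatibility of two constructions of the same multiplication, so it should reduce to the universal property of geometric realization as a coend together with associativity/unitality coherences, but it is the step where the diagram chase is genuinely required. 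Once this is in place, the isomorphism of \cite[Proposition IX.2.5]{EKMM} is seen to be a composite of maps of $S$-ring spectra, and since it is already known to be an isomorphism in $\mathscr{D}_S$, it is an isomorphism of $S$-ring spectra, which is the claim.
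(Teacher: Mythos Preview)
Your outline is the right one and matches the paper's strategy: resolve $A$ by the two-sided bar construction $B^S(A,A,A)_\bullet$ viewed as a simplicial commutative $S$-algebra, and identify the THH simplicial object with $\underline{B} \wedge_{\underline{A^e}} B^S(A,A,A)_\bullet$ levelwise. But there is a genuine gap at the step where you pass from the underived to the derived smash product as a ring spectrum. To invoke Lemma \ref{cof1en} you need $A^e \to |B^S(A,A,A)_\bullet|$ to be a cofibration \emph{in $\mathscr{C}\mathscr{A}_S$}, not merely that $|B^S(A,A,A)_\bullet|$ is a cell $A^e$-module; your phrase ``the cofibrancy replacement used to compute $\wedge_{A^e}^L$'' does not supply this. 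The paper obtains it by identifying $|B^S(A,A,A)_\bullet|$ with the tensor $A \otimes |\Delta^1|$ (applying $A \otimes (-)$ to the simplicial $1$-simplex and using \cite[Proposition VII.3.2]{EKMM}), so that the map in question becomes $A \otimes |\partial\Delta^1| \to A \otimes |\Delta^1|$, a cofibration because the model structure on $\mathscr{C}\mathscr{A}_S$ is topological.

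Your flagged obstacle---that the realization/smash-product interchange $B \wedge_{A^e} |X_\bullet| \cong |B \wedge_{A^e} X_\bullet|$ must be checked to be a map of $S$-ring spectra---is sidestepped in the paper by working entirely inside $\mathscr{C}\mathscr{A}_S$. There the smash $B \wedge_{A^e}(-)$ is a pushout, internal geometric realization is the coend $\int^\Delta X_n \otimes \Delta_n$, and since $-\otimes X$ commutes with colimits one gets the interchange isomorphism directly in $\mathscr{C}\mathscr{A}_S$; Remark \ref{georeal} then identifies this internal realization with the $S$-module realization as commutative $S$-algebras. So no separate compatibility check with \cite[Proposition X.1.5]{EKMM} is needed.
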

\begin{proof}

 

Let $B^S(A,A,A)_{\LargerCdot}$ be the bar construction (see \cite[Definition IV.7.2]{EKMM}).  It is a simplicial commutative $S$-algebra with $B^S(A, A, A)_n = A \wedge_S A^{\wedge_S n} \wedge_S A$. The inclusions of the first and the last smash factor   
give a map of commutative $S$-algebras
\[ A^e \cong  |\underline{A^e}|  \to |B^S(A,A,A)_{\LargerCdot}|.\]  We have: 
\begin{itemize}
\item By \cite[Lemma VII.7.3]{EKMM} the map of commutative $A^e$-algebras 
\[ |B^S(A,A,A)_{\LargerCdot}| \to |\underline{A}| \cong A\]
 is a weak equivalence. 
\item The map $A^e \to |B^S_{\LargerCdot}(A,A,A)|$ is a cofibration of commutative $S$-algebras. 
To see this we consider the simplicial $1$-simplex $[n] \mapsto \Delta^1_n$. We interpret $\Delta^1$ as a discrete simplicial space. Since the functor $A \otimes -: \mathscr{U} \to \mathscr{C}\mathscr{A}_S$ commutes with coproducts,  we can identify $|B_{\LargerCdot}^S(A,A,A)|$ with the geometric realization of  the simplicial commutative $S$-algebra
\[[n] \mapsto A \otimes (\Delta^1)_n.\]
  By \cite[Proposition VII.3.2]{EKMM} this is isomorphic  in $\mathscr{C}\mathscr{A}_S$ to $A \otimes |\Delta^1|$. Likewise, $|\underline{A^e}|$ is isomorphic to $A \otimes |\partial \Delta^1|$. The map $|\underline{A^e}| \to |B^S(A,A,A)_{\LargerCdot}|$ can be identified with
\[ \begin{tikzcd}
A \otimes |\partial \Delta^1 | = \bigl(A \otimes |\partial \Delta^1 |  \wedge_{S \otimes |\partial \Delta^1 | } S \otimes |\Delta^1|\bigr)  \ar{r} &  A \otimes |\Delta^1| ,
\end{tikzcd}\]
which is a cofibration in $\mathscr{C}\mathscr{A}_S$, because the model category structure  is topological. 
\end{itemize}
We are now ready to prove the lemma. We have  an isomorphism of commutative $S$-algebras 
\begin{eqnarray*}
|[n] \mapsto B \wedge_S A^{\wedge_S n}| &  \cong &  |\underline{B} \wedge_{\underline{A^e}} B^S(A,A,A)_{\LargerCdot}| 
\end{eqnarray*}
By  the universal properties of coends and pushouts, and the fact that the  functor $- \otimes X: \mathscr{C}\mathscr{A}_S \to \mathscr{C}\mathscr{A}_S$ commutes with colimits, we get that this is isomorphic to 
 \[ |\underline{B}| \wedge_{|\underline{A^e}|} |B^S(A,A,A)_{\LargerCdot}|   \cong  B \wedge_{A^e} |B^S(A,A,A)_{\LargerCdot}|.\] 
Since $A^e$ is a cofibrant commutative $S$-algebra (as coproduct of cofibrant objects), we get by Lemma \ref{cof1en}
that the map
\[\begin{tikzcd}
 B \wedge_{A^e}^L |B^S(A,A,A)_{\LargerCdot}|  \ar{r} & B \wedge_{A^e} |B^S(A,A,A)_{\LargerCdot} |
 \end{tikzcd}\]
 in $\mathscr{D}_{A^e}$  is an isomorphism of $A^e$-ring spectra.  
Since
\[   B \wedge_{A^e}^L |B^S(A,A,A)_{\LargerCdot} | \cong B \wedge_{A^e}^L A\]
as $A^e$-ring spectra, this finishes the proof. 

\end{proof}
\begin{rmk} \label{natTHH}
Let 
\begin{equation} \label{standdiag}
\begin{tikzcd}
A \ar{r} \ar{d} & B \ar{d}  \\
C \ar{r} & D 
\end{tikzcd}
\end{equation}
be a commutative diagram of cofibrant commutative $S$-algebras. We then have a commutative diagram in $\mathscr{D}_S$ 
\[\begin{tikzcd}
B \wedge_{A^e}^L A \ar{r}{\cong} \ar{d} &  \vert \left [ n \right ]  \mapsto B \wedge_S A^{\wedge_S n}  \vert \ar{d} \\
D \wedge_{C^e}^L C \ar{r}{\cong} &   \vert \left [ n \right ]  \mapsto D \wedge_S C^{\wedge_S n} \vert,
\end{tikzcd}\]
where the left vertical map is given by $B \wedge_{A^e}^L A \to  D \wedge_{A^e}^L C \to D \wedge_{C^e}^L C$. 
\end{rmk}
Until the end of the section, we work with the  Definition (\ref{THH2}) for $\THH$ (second definition). 
In order to proof the generalization of Brun's spectral sequence,  we first identify $\THH$ with another ring spectrum. After that, we apply the Atiyah-Hirzebruch spectral sequence. 
\begin{lem} \label{THHid}
Let $S \to A \to B$ be cofibrations of commutative $S$-algebras.
Then we have an isomorphism of $A^e$-ring spectra 
\[\THH(A;B) \cong (B \wedge_A B) \wedge_{B^e}^L B.\] 
\end{lem}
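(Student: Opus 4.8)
```latex
The plan is to compute $\THH(A;B) = B \wedge_{A^e}^L A$ by a change-of-rings argument, replacing the $A^e$-module $A$ by something with a visible $B^e$-structure. The starting point is the standard pushout identity $A^e = A \wedge_S A \to B \wedge_S B = B^e$, together with the fact that $B$, as an $A^e$-module, is obtained by restriction along $A^e \to B^e \to B$. First I would record the base change isomorphism
\[
B \wedge_{A^e}^L A \;\cong\; B \wedge_{B^e}^L (B^e \wedge_{A^e}^L A),
\]
valid in $\mathscr{D}_{B^e}$ (hence also in $\mathscr{D}_{A^e}$ after restriction), which follows from associativity of the derived smash product over the tower $A^e \to B^e$. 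So the whole problem reduces to identifying $B^e \wedge_{A^e}^L A$ with $B \wedge_A B$ as a $B^e$-ring spectrum.

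For that identification I would argue as follows. Since $A \to B$ is a cofibration of commutative $S$-algebras and $A$ is cofibrant, $B$ is a cofibrant commutative $A$-algebra, so by Lemma \ref{cof1en} (applied over $A$) the canonical map $B \wedge_A^L B \to B \wedge_A B$ is an isomorphism of $A$-ring spectra; I will freely use $B \wedge_A B$ as a model for the derived smash product. Now consider the chain of base-change identifications
\[
B^e \wedge_{A^e}^L A \;=\; (B \wedge_S B) \wedge_{A \wedge_S A}^L A \;\cong\; (B \wedge_A^L A) \wedge_A^L (A \wedge_A^L B) \;\cong\; B \wedge_A^L B,
\]
where the middle step distributes the smash product $B \wedge_S B = (B \wedge_A A) \wedge_S (A \wedge_A B)$ over the two $A$-factors of $A^e$ using the standard shuffle isomorphism $(M \wedge_S N) \wedge_{A \wedge_S A} (P \wedge_A Q) \cong (M \wedge_A P) \wedge_A (N \wedge_A Q)$ for $A$-modules — here with $P = Q = A$ on one side. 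All of these are isomorphisms of $B^e$-ring spectra because each factor carries compatible commutative-algebra structures and the lax monoidality recollections of Section \ref{Presec} guarantee the structure maps match up. Substituting back gives $\THH(A;B) \cong B \wedge_{B^e}^L (B \wedge_A B)$, and since $B \wedge_A B$ is the enveloping-algebra coefficient with the evident $B^e$-action, the right-hand side is exactly $(B \wedge_A B) \wedge_{B^e}^L B$.

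The main obstacle I anticipate is bookkeeping the commutative-algebra (equivalently $B^e$-ring-spectrum) structures through the shuffle isomorphism and the base-change isomorphisms, rather than any homotopy-theoretic subtlety: one must check that the various identifications are not merely equivalences of underlying $S$-modules but respect the ring structures and the $B^e$-module structures, and in particular that the two ways of viewing $B$ as an $A^e$-module (via $A^e \to B^e$ then the multiplication $B^e \to B$, versus directly) agree. For this I would lean on the compatibility of the lax symmetric monoidal functors $i_R$ and the restriction functors $\phi^*$ discussed in Section \ref{Presec}, which is precisely what makes the structure maps of the composite lax monoidal functors agree, so that a map that is an isomorphism of underlying ring spectra and is built entirely from such canonical structure maps is automatically an isomorphism of $B^e$-ring spectra. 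A secondary point of care is ensuring all objects in sight are suitably cofibrant so that the underived and derived smash products agree where claimed; this is handled by Lemma \ref{cof1en} and the observation (used already in the proof of Lemma \ref{compthh}) that $A^e$ and $B^e$ are cofibrant commutative $S$-algebras as coproducts of cofibrant ones.
```
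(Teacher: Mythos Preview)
Your approach is correct and follows essentially the same strategy as the paper: both proofs hinge on the identification $B^e \wedge_{A^e} A \cong B \wedge_A B$ together with an associativity/base-change step, and both invoke Lemma~\ref{cof1en} to pass between derived and underived smash products.

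The presentation differs. The paper works at the point-set level: it factors $B^e \to B$ as a cofibration $B^e \rightarrowtail \tilde{B}$ followed by a weak equivalence, then uses associativity of pushouts in $\mathscr{C}\mathscr{A}_S$ to write $\tilde{B} \wedge_{A^e} A \cong (A \wedge_{A^e} B^e) \wedge_{B^e} \tilde{B}$, identifies $A \wedge_{A^e} B^e \cong B \wedge_A B$ directly via the universal property of pushouts, and applies Lemma~\ref{cof1en} at both ends. You instead invoke the derived base-change isomorphism $B \wedge_{A^e}^L A \cong B \wedge_{B^e}^L (B^e \wedge_{A^e}^L A)$ and identify $B^e \wedge_{A^e}^L A$ with $B \wedge_A B$. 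The paper's route has the advantage that the ring-spectrum structures are visibly carried along by maps of commutative $S$-algebras throughout, so no separate bookkeeping is needed; your route is shorter but, as you note, requires checking that the derived base-change map is a map of ring spectra, which the lax-monoidal machinery of Section~\ref{Presec} does support.

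Two small points to tighten. First, your general shuffle formula $(M \wedge_S N) \wedge_{A \wedge_S A} (P \wedge_A Q) \cong (M \wedge_A P) \wedge_A (N \wedge_A Q)$ does not typecheck as stated for arbitrary $A$-modules; only the specialization $P=Q=A$ you actually use, namely $(M \wedge_S N) \wedge_{A^e} A \cong M \wedge_A N$, is the relevant (and correct) statement. Second, to apply Lemma~\ref{cof1en} to conclude $B^e \wedge_{A^e}^L A \simeq B^e \wedge_{A^e} A$, cofibrancy of $A^e$ and $B^e$ alone is not enough: you need $A^e \to B^e$ to be a cofibration of commutative $S$-algebras. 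The paper verifies this explicitly via the two pushout squares exhibiting $A^e \to A \wedge_S B \to B^e$ as a composite of cobase changes of $A \to B$; you should include that check.
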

\begin{proof}
We factor the map $B^e \to B$ in commutative $S$-algebras as
\[\begin{tikzcd}
B^e    \ar[tail]{r} & \tilde{B} \ar[two heads]{r}{\sim} & B.
\end{tikzcd}\]
We then have an isomorphism of $A^e$-ring spectra 
\[ \THH(A;B) \cong \tilde{B} \wedge_{A^e}^L A.\] 
We claim that the map 
\[\begin{tikzcd}
\tilde{B} \wedge_{A^e}^L A  \ar{r} & \tilde{B} \wedge_{A^e} A
\end{tikzcd}\] 
in $\mathscr{D}_{A^e}$ is an isomorphism of $A^e$-ring spectra.  By Lemma \ref{cof1en} we only have to show that 
$A^e \to B^e$ is a cofibration of commutative $S$-algebras. 
This is true because $A \to  B$ is a cofibration, and, by denoting the coproduct inclusions by $i_1$ and $i_2$, we have pushout diagrams
\[\begin{tikzcd}
A \ar[tail]{d} \ar{r}{i_2} & A \wedge_S A \ar{d} &  & A \ar[tail]{d} \ar{r}{i_1} & A \wedge_S B \ar{d}  \\
B \ar{r}{i_2} & A \wedge_S B  & &  B \ar{r}{i_1} & B \wedge_S B.
\end{tikzcd}\]
Since $\tilde{B} \cong B^e \wedge_{B^e} \tilde{B}$ as $B^e$-algebras, we  have an isomorphism of $A^e$-algebras
\[ \tilde{B} \wedge_{A^e} A \cong A \wedge_{A^e} \tilde{B} \cong A \wedge_{A^e} (B^e \wedge_{B^e} \tilde{B}).\]
By using the universal property of pushouts in the category of commutative $S$-algebras, one sees that
\[  A \wedge_{A^e} (B^e \wedge_{B^e} \tilde{B}) \cong (A \wedge_{A^e} B^e) \wedge_{B^e} \tilde{B}.\]
Obviously, the isomorphism is  an isomorphism of $A^e$-algebras. 
Again, by using the universal property of pushouts  one  sees that we have an isomorphism of commutative $S$-algebras 
\[ A \wedge_{A^e} B^e \cong B \wedge_A B.\]
The isomorphism  is an isomorphism of $B^e$-algebras. Therefore, we get 
\[(A \wedge_{A^e} B^e) \wedge_{B^e} \tilde{B} \cong (B \wedge_A B) \wedge_{B^e} \tilde{B} \]
as $B^e$-algebras.
We claim that the map 
\[\begin{tikzcd}
(B \wedge_A B) \wedge_{B^e}^L \tilde{B}  \ar{r} & (B \wedge_A B) \wedge_{B^e} \tilde{B} 
\end{tikzcd}\]
in $\mathscr{D}_{B^e}$ is an isomorphism of $B^e$-ring spectra. This follows from Lemma \ref{cof1en} if we can show that $B \wedge_A B$ is a cofibrant commutative $S$-algebra. 
But this is clear, because $A \to B \wedge_A B$ is a cofibration in $\mathscr{C}\mathscr{A}_S$, since cofibration are stable under cobase change. 
As we have \[(B \wedge_A B) \wedge_{B^e}^L \tilde{B} \cong (B \wedge_A B) \wedge_{B^e}^L B\] 
as $B^e$-ring spectra, this finishes the proof. 
\end{proof}
\begin{rmk} \label{natTHHid}
Given a diagram of the form (\ref{standdiag}), where the horizontal maps are cofibrations in $\mathscr{C}\mathscr{A}_S$,   one gets a commutative diagram in $\mathscr{D}_S$ 
\[\begin{tikzcd}
\THH(A;B) \ar{r}{\cong} \ar{d} \ar{d}  & (B \wedge_A B) \wedge_{B^e}^L B \ar{d} \\
\THH(C;D) \ar{r}{\cong} & (D \wedge_C D) \wedge_{D^e}^L D.
\end{tikzcd}\]
Here, the right vertical map is given by 
\[\begin{tikzcd}
(B \wedge_A B) \wedge_{B^e}^L  B \ar{r} & (D \wedge_C D) \wedge_{B^e}^L D \to (D \wedge_C D) \wedge_{D^e}^L D.
\end{tikzcd}\]
One uses that maps in $\mathscr{C}\mathscr{A}_S$  can be factored functorially into cofibrations followed by weak equivalences. 
\end{rmk}
Before we can prove the generalization of Brun's spectral sequence we need another remark:
 
 \begin{rmk} \label{assocSR}
 Let $A$ be a  commutative $S$-algebra and let $E$ be an  $S$-ring spectrum that is a cell $S$-module. Then by  \cite[Proposition III.4.1]{EKMM}  $E \wedge_S A$ is a cell $A$-module. We can equip $E \wedge_S A$ with the structure of an  $A$-ring spectrum.  Its image under $\mathscr{D}_A \to \mathscr{D}_S$ is isomorphic to the $S$-ring spectrum $E \wedge_S^L A$.   If $A$ is a commutative $R$-algebra and $B$ is an $R$-ring spectrum,  we have
\begin{equation*} \label{ringsp}
 (E \wedge_S A) \wedge_R^L B \cong E \wedge_S^L (A \wedge_R^L B)
\end{equation*}
as $S$-ring spectra. The isomorphism is natural in $E$, $A$ and $B$. If $R \to R'$ is a morphism of commutative $S$-algebras, $B$ is an $R'$-ring spectrum and $A$ is a commutative $R'$-algebra the following diagram commutes in $\mathscr{D}_S$:
\[\begin{tikzcd}
(E \wedge_S A) \wedge_R^L B \ar{r}{\cong}  \ar{d} & E \wedge_S^L (A \wedge_R^LB)  \ar{d} \\ 
(E \wedge_SA)  \wedge_{R'}^L B \ar{r}{\cong}  & E \wedge_S^L(A \wedge_{R'}^L B) 
\end{tikzcd}\]
 \end{rmk}

\begin{thm} \label{BrSS}
Let $A$ be a cofibrant commutative $S$-algebra and let $B$ be a connective cofibrant
commutative $A$-algebra. Let $E$ be an $S$-ring  spectrum. Then there is a multiplicative spectral sequence of
the form 
\[   E^2_{n,m} = \THH_n\bigl(B;HE_m^S(B \wedge_A B)\bigr) \Longrightarrow E_{n+m}^S\bigl(\THH(A; B)\bigr).\]
\end{thm}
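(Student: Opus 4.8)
The plan is to realize this spectral sequence as a special case of the multiplicative Atiyah--Hirzebruch spectral sequence of Theorem~\ref{AHss} and Proposition~\ref{AHssM}, taken over the base ring $R = B^e = B \wedge_S B$. First I would record the point-set input. Since $A$ is a cofibrant commutative $S$-algebra and $B$ a cofibrant commutative $A$-algebra, the maps $S \to A \to B$ are cofibrations in $\mathscr{C}\mathscr{A}_S$, so $B$ is cofibrant in $\mathscr{C}\mathscr{A}_S$ and $B^e$, being a coproduct of cofibrant objects, is a cofibrant commutative $S$-algebra, which is connective because $B$ is. The multiplication $\mu \colon B^e = B \wedge_S B \to B$ is a map of commutative $S$-algebras, so $B$ is a connective commutative $B^e$-algebra; in particular $B$ is a connective $B^e$-module and carries a multiplication $B \wedge_{B^e}^L B \to B$ in $\mathscr{D}_{B^e}$. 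I would also replace $E$ by a weakly equivalent cell $S$-module (still an $S$-ring spectrum in $\mathscr{D}_S$), so that $G \coloneqq E \wedge_S (B \wedge_A B)$ is defined, is a $B^e$-ring spectrum (by Remark~\ref{assocSR}, since $B \wedge_A B$ is a commutative $B^e$-algebra via $B^e = B \wedge_S B \to B \wedge_A B$), and satisfies $\pi_m(G) = E_m^S(B \wedge_A B)$.

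The heart of the argument is the identification of the abutment. For a $B^e$-module $N$ one has $\THH_n(B;N) = \pi_n(N \wedge_{B^e}^L B) = N^{B^e}_n(B)$ by Definition~\ref{THH2}. Taking $N = G$ and combining Remark~\ref{assocSR} (with $R = B^e$) with Lemma~\ref{THHid} yields isomorphisms of $S$-ring spectra
\[ G \wedge_{B^e}^L B = \bigl(E \wedge_S (B \wedge_A B)\bigr) \wedge_{B^e}^L B \cong E \wedge_S^L \bigl((B \wedge_A B) \wedge_{B^e}^L B\bigr) \cong E \wedge_S^L \THH(A;B), \]
hence $G^{B^e}_{n+m}(B) \cong E_{n+m}^S(\THH(A;B))$. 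I would then apply Theorem~\ref{AHss} with $R = B^e$, the connective module $M = B$, and the coefficient module $G$, obtaining a strongly convergent spectral sequence $E^2_{n,m} = (HG_m)^{B^e}_n(B) \Longrightarrow G^{B^e}_{n+m}(B)$. Identifying the abutment as above and the $E^2$-term as $(HG_m)^{B^e}_n(B) = \THH_n(B; HG_m) = \THH_n\bigl(B; HE_m^S(B \wedge_A B)\bigr)$ --- where $HG_m$ is the Eilenberg--Mac Lane $B^e$-module associated to the $\pi_0(B^e)$-module $E_m^S(B \wedge_A B)$, which by (\ref{uniquems}) does not depend on the chosen realization --- gives the spectral sequence of the theorem.

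For multiplicativity I would invoke Proposition~\ref{AHssM} with $R = B^e$, $M = N = L = B$, the ring spectrum $G$, the pairing $B \wedge_{B^e}^L B \to B$ coming from the commutative $B^e$-algebra structure on $B$, and the multiplication $G \wedge_{B^e}^L G \to G$ of $G$. This produces a pairing of spectral sequences converging to the product on $\pi_*(G \wedge_{B^e}^L B)$, and it remains to check that under the isomorphisms above this matches the natural ring structure on $E_*^S\THH(A;B)$ induced by the commutative $S$-algebra $\THH(A;B)$ and the ring spectrum $E$, and that the resulting $E^2$-pairing is the evident product on $\THH(B;-)$. Both follow because Lemma~\ref{THHid} and Remark~\ref{assocSR} furnish natural isomorphisms of ring spectra, together with the naturality packaged in Lemma~\ref{natati} and Remarks~\ref{natTHH} and~\ref{natTHHid}.

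I expect the main obstacle to be bookkeeping rather than homotopy theory: making the various ring and module structures --- on $\THH(A;B)$ via both of its descriptions, on $B$ as a $B^e$-algebra, on $G$, and on all the intervening smash products --- strictly compatible, so that the abstract pairing supplied by Proposition~\ref{AHssM} is literally the expected product on $E$-homology, and correctly pinning down the Eilenberg--Mac Lane coefficient module in the $E^2$-term. All the substantive input --- the identification in Lemma~\ref{THHid}, the construction and multiplicativity of the Atiyah--Hirzebruch spectral sequence, and the associativity isomorphism of Remark~\ref{assocSR} --- is already available; what remains is a careful naturality chase.
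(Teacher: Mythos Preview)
Your proposal is correct and follows essentially the same approach as the paper: apply the Atiyah--Hirzebruch spectral sequence of Theorem~\ref{AHss} over $R=B^e$ with $M=B$ and $G=\Gamma^S E \wedge_S (B\wedge_A B)$, identify the abutment via Remark~\ref{assocSR} and Lemma~\ref{THHid}, and obtain multiplicativity from Proposition~\ref{AHssM}. The only place where the paper is slightly more explicit is the connectivity of $B^e$, which it justifies via the Tor spectral sequence rather than by bare assertion.
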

\begin{proof}
Since $B$ is a cofibrant commutative $S$-algebra, the map 
\[\Gamma^S B \wedge_S \Gamma^S B \to B \wedge_S B\]
is a weak equivalence by \cite[Theorem VII.6.5, Theorem VII.6.7]{EKMM}. By \cite[p.2]{LewMand} we therefore 
have a strongly convergent spectral sequence of the form
\[ E^2_{n,m} = \Tor_{n,m}^{S_*}(B_*, B_*) \Longrightarrow \pi_{n+m}(B  \wedge_S B).\]
This shows that $B \wedge_S B$ is connective.  
Since $B^e$ is a cofibrant in $\mathscr{C}\mathscr{A}_S$, Theorem \ref{AHss} yields a strongly convergent spectral sequence of the form 
\[E^2_{n,m} = \Bigl(H\pi_m\bigl(\Gamma^S E \wedge_S (B \wedge_A B)\bigr)\Bigr)_n^{B^e} B \Longrightarrow \bigl(\Gamma^S E \wedge_S (B \wedge_A B)\bigr)_{n+m}^{B^e} B.\]
By Remark \ref{assocSR} and Proposition \ref{AHssM} the spectral sequence is multiplicative. 
We have 
\begin{align*}
E^2_{n,m} & = \pi_n\Bigl(H\pi_m\bigl(\Gamma^S E \wedge_S (B \wedge_A B)\bigl) \wedge_{B^e}^L B\Bigl) \\
          & = \pi_n\bigr( HE_m^S(B \wedge_A B) \wedge_{B^e}^L B \bigl) \\
          & = \THH_n\bigl(B; HE_m^S(B \wedge_A B)\bigr).
          \end{align*}
By  Remark \ref{assocSR} and by Lemma \ref{THHid} we have isomorphisms 
\begin{align*} 
\bigl(\Gamma^S E \wedge_S (B \wedge_A B)\bigr)_{*}^{B^e} B  & =  \pi_*\Bigl(\bigl(\Gamma^S E \wedge_S (B \wedge_A B)\bigr) \wedge_{B^e}^L B\Bigr) \\
 & \cong \pi_*\Bigl(E \wedge_S^L \bigl((B \wedge_A B) \wedge_{B^e}^L B \bigr)\Bigr) \\
 & \cong \pi_*\bigl(E \wedge_S^L \THH(A;B)\bigr) .
\end{align*}
that are compatible with the multiplications.  
\end{proof}

We want  to find a more computable description of the $E^2$-page in special cases. To see that different ring spectra are isomorphic, the following remark  will be useful. 

\begin{rmk} \label{uniems}
Let $R$ be a connective commutative $S$-algebra.  Let $A$ and $B$ be $R$-ring spectra with $\pi_*(A) = 0 = \pi_*(B)$ for $* \neq 0$. Suppose that there is an isomorphism $A_0 \cong B_0$ that is $R_0$-linear and compatible with the unit and multiplication. 
Then, by using \cite[p.3]{LewMand},  one gets $A \cong B$ as $R$-ring spectra. 
\end{rmk}

Now let $A$ be a connective cofibrant commutative $S$-algebra. Then, we have a map of commutative $S$-algebras $A \to H \pi_0(A)$ realizing the identity on $\pi_0$. We consider  $\THH(A; H\pi_0(A)) = H\pi_0(A) \wedge_{A^e}^L A$. By Remark \ref{uniems}   this does not depend on the choice of the representative of the homotopy class $A \to H\pi_0(A)$ in $\bar{h}\mathscr{C}\mathscr{A}_S$.  
By using the commutative diagram (\ref{postnikovnat}) we also see that $\THH(A; H\pi_0(A))$ is isomorphic  to 
$H\pi_0(A) \wedge_{A^e}^L A$ built via 
$A^e \to H\pi_0(A^e) \to H\pi_0(A)$.

\begin{lem} \label{BrE2}
In the situation of Theorem  \ref{BrSS} we assume that $E_m^S(B \wedge_A B)$ is an $\mathbb{F}_p$-vector space for all $m$ and that $\pi_0(B)/{p\pi_0(B)} = \mathbb{F}_p$ as rings. 
Then we have an isomorphism
\[E^2_{n,m} \cong E_m^S(B \wedge_A B) \otimes_{\mathbb{F}_p} \THH_n(B; H\Fp).\]
that is compatible with the multiplication. 
Here, the multiplication on the right side is given by $a \otimes b \cdot c \otimes d = (-1)^{|b| |c|} a  c \otimes b  d$.  
The $(B, B)$-bimodule structure on $H\mathbb{F}_p$ is given by the morphism of commutative $S$-algebras $B \to H\pi_0(B) \to H\mathbb{F}_p$.
\end{lem}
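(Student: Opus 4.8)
The plan is to show that the $(B,B)$-bimodule $HE_m^S(B\wedge_A B)$ supplying the coefficients of the $E^2$-page in Theorem~\ref{BrSS} is, as a $B^e$-module, the restriction of an $\HFp$-module along the ring map $B^e\to B\to H\pi_0(B)\to\HFp$ named in the statement, and then to compute $\THH\bigl(B;HE_m^S(B\wedge_A B)\bigr)$ by a change-of-rings isomorphism combined with the fact that an $\Fp$-vector space yields a free $\HFp$-module. Everything reduces to three identifications plus a compatibility check of products.

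\emph{Step 1: the bimodule structure.} Write $V:=E_m^S(B\wedge_A B)$. By the construction in Theorem~\ref{AHss} the $B^e$-module $HV$ entering the spectral sequence is obtained by pulling back along $B^e\to H\pi_0(B^e)$ the Eilenberg--Mac Lane $H\pi_0(B^e)$-module on $V$, which by \eqref{uniquems} is determined by the $\pi_0(B^e)$-module structure of $V$; that structure factors through $\pi_0(B^e)\to\pi_0(B\wedge_A B)$ since $B\wedge_A B$ is a $B^e$-algebra. Using the K\"unneth spectral sequence (as in the proof of Theorem~\ref{BrSS}) and the connectivity of $A$ and $B$, I would identify $\pi_0(B^e)\cong\pi_0(B)\otimes_{\mathbb Z}\pi_0(B)$ and $\pi_0(B\wedge_A B)\cong\pi_0(B)\otimes_{\pi_0(A)}\pi_0(B)$ with the evident maps. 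Since $V$ is an $\Fp$-vector space, $p$ annihilates $V$, so the action factors through $\pi_0(B\wedge_A B)/p\,\pi_0(B\wedge_A B)$; because any ring homomorphism to $\Fp$ is surjective and $\pi_0(B)/p\,\pi_0(B)=\Fp$ by hypothesis, this quotient is a nonzero quotient ring of $\Fp$, hence equals $\Fp$, and the $\pi_0(B^e)$-action on $V$ is the composite $\pi_0(B^e)\xrightarrow{\mathrm{mult}}\pi_0(B)\to\pi_0(B)/p\,\pi_0(B)=\Fp\to\mathrm{End}_{\Fp}(V)$. Consequently $HV$ carries its (unique) $\HFp$-module structure, and its $B^e$-module structure is restriction along $B^e\to B\to H\pi_0(B)\to\HFp$; here one also uses the naturality \eqref{postnikovnat} of the Postnikov section to match the two descriptions of this ring map, together with the uniqueness statements of Remark~\ref{uniems} to see that the $\HFp$-algebra structures in play agree with the one in the statement.

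\emph{Steps 2--3: the additive identification.} Granting Step~1, change of rings along $B^e\to\HFp$ gives
\[
\THH(B;HV)=HV\wedge_{B^e}^L B\;\cong\;HV\wedge_{\HFp}^L\bigl(\HFp\wedge_{B^e}^L B\bigr)=HV\wedge_{\HFp}^L\THH(B;\HFp),
\]
where $\THH(B;\HFp)$ uses precisely the bimodule structure named in the statement. Choosing an $\Fp$-basis $I$ of $V$ yields an equivalence of $\HFp$-modules $HV\simeq\bigvee_{i\in I}\HFp$ (by the uniqueness \eqref{uniquems} of Eilenberg--Mac Lane $\HFp$-modules; this is valid for $V$ of arbitrary cardinality since $H(-)$, $(-)\wedge_{\HFp}^L(-)$ and $\pi_*$ all commute with coproducts). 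Smashing over $\HFp$ and taking homotopy groups gives
\[
\THH_n(B;HV)\cong\bigoplus_{i\in I}\THH_n(B;\HFp)\cong V\otimes_{\Fp}\THH_n(B;\HFp),
\]
naturally in $n$ and in $m$, which is the asserted isomorphism on $E^2$-pages.

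\emph{Step 4: multiplicativity.} Finally I would trace the ring structures. The product on the $E^2$-page of the Brun spectral sequence arises, via Proposition~\ref{AHssM} applied to $G=\Gamma^S E\wedge_S(B\wedge_A B)$ and $M=N=L=B$, from the K\"unneth pairings $HE_m^S(B\wedge_A B)\wedge_{B^e}^L HE_l^S(B\wedge_A B)\to HE_{m+l}^S(B\wedge_A B)$ realizing the graded $\Fp$-algebra multiplication on $E_*^S(B\wedge_A B)$, together with the $\THH$-product. Each identification in Steps~1--3 is compatible with these structures: the change-of-rings isomorphism is one of ring spectra, and the wedge splittings are compatible with the pairings. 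Hence the induced product on $E_m^S(B\wedge_A B)\otimes_{\Fp}\THH_n(B;\HFp)$ is the evident one, and the sign $(-1)^{|b||c|}$ is exactly the Koszul sign produced by the symmetry isomorphism of $\mathbb Z$-graded abelian groups when the $\THH$-factor of the first tensor is commuted past the $E_*^S(B\wedge_A B)$-factor of the second. I expect Step~1 --- pinning down the bimodule structure, where the connectivity of $B$ and the ring hypothesis $\pi_0(B)/p\,\pi_0(B)=\Fp$ are genuinely used --- to be the main obstacle, with the multiplicative bookkeeping of Step~4 the most laborious part.
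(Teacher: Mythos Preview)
Your approach is essentially the paper's: pin down the $B^e$-module structure on $HV$ as restricted from an $H\Fp$-module, apply the change-of-rings isomorphism $HV\wedge_{B^e}^L B\simeq HV\wedge_{H\Fp}^L(H\Fp\wedge_{B^e}^L B)$, and then use that $HV$ is free over $H\Fp$ to split off the tensor factor; the paper carries out the multiplicative bookkeeping by working explicitly on the $E^1$-page rather than at the derived level, but the content is the same.

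One correction to Step~1: you invoke connectivity of $A$ to identify $\pi_0(B\wedge_A B)$, but Theorem~\ref{BrSS} does not assume $A$ connective, so this detour is illegitimate. The paper sidesteps the issue by never passing to $\pi_0(B\wedge_A B)$: it argues directly that the $\pi_0(B^e)\cong\pi_0(B)\otimes_{\mathbb Z}\pi_0(B)$-action on $V$ factors through $\Fp$, since each of the two $\pi_0(B)$-actions (via the two inclusions $\pi_0(B)\to\pi_0(B)\otimes\pi_0(B)$) must factor through $\pi_0(B)/p=\Fp$ because $pV=0$, and an abelian group admits at most one $\Fp$-action. You can simply replace your paragraph about $\pi_0(B\wedge_A B)/p$ with this observation.
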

\begin{proof}
We have \[E^2_{n,m} \cong \pi_n(HE^S_m(B \wedge_A B) \wedge_{B^e} \Gamma^{B^e} B).\]
The $B^e$-action on $HE^S_m(B \wedge_A B)$ is given by pulling back the $H\pi_0(B^e)$-action along the map $B^e \to H\pi_0(B^e)$. 
The K\"unneth map 
\[\pi_0(B) \otimes_{\mathbb{Z}} \pi_0(B) \to \pi_0(B \wedge_S B)\]
is a ring map, and it is an isomorphism because it is the edge homomorphism in the Tor spectral sequence
\[  E^2_{n,m} = \Tor_{n,m}^{S_*}(B_*, B_*) \Longrightarrow \pi_{n+m}(B  \wedge_S B)\]
(see \cite[pp.2--3]{LewMand}) and because $B$ is connective. 
The $\pi_0(B) \otimes \pi_0(B) = \pi_0(B \wedge_S B)$-action on $E_m^S(B \wedge_A B)$ is given by pulling back the $\mathbb{F}_p$-action along 
\[ \pi_0(B) \otimes \pi_0(B) \to \pi_0(B) \to \pi_0(B)/{p\pi_0(B)} = \mathbb{F}_p.\]
This is because the two  actions of $\pi_0(B)$ induced  by  the two inclusions \[\pi_0(B) \to \pi_0(B) \otimes \pi_0(B)\] have to factor  
over  $\pi_0(B)/{p\pi_0(B)}$-actions because $E_m^S(B \wedge_A B)$ is an $\mathbb{F}_p$-vector space,
and because there is only one $\pi_0(B)/{p\pi_0(B)} = \mathbb{F}_p$-action on a given abelian group. 
Because of the uniqueness of Eilenberg-Mac Lane spectra we can write
\[E^2_{n,m} \cong \pi_n(HE^S_m(B \wedge_A B) \wedge_{B^e} \Gamma^{B^e} B), \]
where we realize $HE^S_m(B \wedge_A B)$ as an $H\mathbb{F}_p$-module and equip it with a $B^e$-module structure by pullback along
\begin{equation*} \label{alguc}
 B^e \to H\pi_0(B^e) \to H\pi_0(B) \to H\mathbb{F}_p.
 \end{equation*} 
We can also assume that $HE^S_m(B \wedge_A B)$ is a cell $H\mathbb{F}_p$-module. 
We have 
\[ HE^S_m(B \wedge_A B) \wedge_{B^e} \Gamma^{B^e} B \cong HE^S_m(B \wedge_A B) \wedge_{H\mathbb{F}_p} (H\mathbb{F}_p \wedge_{B^e} \Gamma^{B^e} B).\] 
With the Tor spectral sequence we get
\begin{align*}
E^2_{n,m} & = E_m^S(B \wedge_A B) \otimes_{\mathbb{F}_p} \pi_n(H \mathbb{F}_p \wedge_{B^e} \Gamma^{B^e} B ) \\
          & = E_m^S(B \wedge_A B) \otimes_{\mathbb{F}_p} \pi_n(H \mathbb{F}_p \wedge_{B^e}^L  B ).
\end{align*}

By Remark \ref{uniems}  this is isomorphic to 
\[E_m^S(B \wedge_A B) \otimes_{\mathbb{F}_p} \THH_n(B; H\mathbb{F}_p).\]

It remains to  prove that the identification is  compatible with the multiplication. 

Our spectral sequence is given as an Atiyah-Hirzebruch spectral sequence with 
 $E^1$-page
\[E^1_{n,m} = \pi_{n+m}\bigl(F \wedge_{B^e} (\Gamma^{B^e} B)^n/{(\Gamma^{B^e} B)^{n-1}} \bigr),\] 
where $F = \Gamma^{B^e}\bigl(\Gamma^S E \wedge_S (B \wedge_A B)\bigr)$. 
The multiplication on the $E^1$-page is given by the maps 
\begin{equation} \label{multi1}
 F \wedge_{B^e} F \to F
 \end{equation}
 and \begin{equation}  \label{multi2}
\begin{tikzcd}  (\Gamma^{B^e} B)^n/{(\Gamma^{B^e} B)^{n-1}} \wedge_{B^e} (\Gamma^{B^e} B)^{n'}/{(\Gamma^{B^e} B)^{n'-1}} \ar{r} & (\Gamma^{B^e} B)^{n+n'}/{(\Gamma^{B^e} B)^{n+n'-1}}.
 \end{tikzcd}
 \end{equation}
 Here, the maps (\ref{multi2}) comes from a cellular representative 
 \begin{equation} \label{multi3}
 \Gamma^{B^e}B \wedge_{B^e} \Gamma^{B^e}B \to \Gamma^{B^e}B 
 \end{equation}
 of the product $B \wedge_{B^e}^L B \to B$. 
One easily sees that the isomorphism between 
\[E_m^S(B \wedge_A B) \otimes_{\mathbb{F}_p} \THH_n(B; H\mathbb{F}_p) \cong F_m \otimes_{\mathbb{F}_p} \pi_n(H \mathbb{F}_p \wedge_{B^e} \Gamma^{B^e} B )\] and the homology of the $E^1$-page is given as follows: 
For 
\[x \in F_m \otimes_{\mathbb{F}_p} \pi_n(H \mathbb{F}_p \wedge_{B^e} \Gamma^{B^e} B )\] choose a preimage 
$y$ under
\[\begin{tikzcd}
 F_m \otimes_{\mathbb{F}_p} \pi_n\bigl(H \mathbb{F}_p \wedge_{B^e} (\Gamma^{B^e} B)^n \bigr)  \ar[two heads]{r} & F_m \otimes_{\mathbb{F}_p} \pi_n\bigl(H \mathbb{F}_p \wedge_{B^e} \Gamma^{B^e} B \bigr).
 \end{tikzcd}\]
 Then $x$ is mapped to the homology class of the image of $y$ under  the composition of 
 \[ \begin{tikzcd}
  F_m \otimes_{\mathbb{F}_p} \pi_n\bigl(H \mathbb{F}_p \wedge_{B^e} (\Gamma^{B^e} B)^n \bigr) \ar{r} & F_m \otimes_{\mathbb{F}_p} \pi_n\bigl(H \mathbb{F}_p \wedge_{B^e} (\Gamma^{B^e} B)^n/{(\Gamma^{B^e} B)^{n-1}} \bigr)
  \end{tikzcd}\]
 and 
  \begin{eqnarray*} 
 & &  F_m \otimes_{\mathbb{F}_p} \pi_n\bigl(H \mathbb{F}_p \wedge_{B^e} (\Gamma^{B^e} B)^n/{(\Gamma^{B^e} B)^{n-1}} \bigr) \\
&   \cong  & F_m \otimes_{\mathbb{F}_p} \mathbb{F}_p \otimes_{(B^e)_0}\pi_n\bigl((\Gamma^{B^e} B)^n/{(\Gamma^{B^e} B)^{n-1}}\bigr)  \\
&  \cong &  F_m  \otimes_{(B^e)_0}\pi_n\bigl((\Gamma^{B^e} B)^n/{(\Gamma^{B^e} B)^{n-1}}\bigr)  \\
& \cong &   \pi_{n+m}\bigl(F \wedge_{B^e} (\Gamma^{B^e} B)^n/{(\Gamma^{B^e} B)^{n-1}} \bigr). 
 \end{eqnarray*}
Every intermediate graded abelian group appearing in this identification can be endowed with a multiplication by using 
the maps $H\mathbb{F}_p \wedge_{B^e} H\mathbb{F}_p \to H\mathbb{F}_p$, (\ref{multi1}), (\ref{multi2}), (\ref{multi3})  and 
 \begin{equation*}
 \begin{tikzcd}
 (\Gamma^{B^e} B)^n \wedge_{B^e} (\Gamma^{B^e} B)^{n'} \to (\Gamma^{B^e} B)^{n+n'}. 
 \end{tikzcd}
 \end{equation*}
For multiplications on tensor products we use the sign convention given in the statement of the lemma. Then, every map in the identification is compatible with the multiplication. 
This proves the lemma. 
\end{proof}

\begin{lem} \label{nat}
Suppose that we have a commutative diagram in $\mathscr{C}\mathscr{A}_S$
\[\begin{tikzcd}
A \ar[tail]{r}  \ar{d} & B \ar{d} \\
C \ar[tail]{r} & D,
\end{tikzcd}\] 
where $A$ and $C$ are cofibrant commutative $S$-algebras, $B$ and $D$ are connective and where  the horizontal maps are cofibrations in $\mathscr{C}\mathscr{A}_S$. 
Furthermore, suppose that we have a morphism of $S$-ring spectra $E \to F$.  
Then we get a morphism of Brun spectral sequences converging to the map  $E_*^S\THH(A;B) \to F^S_*\THH(C;D)$.   
If additionally $E^S_m(B \wedge_A B)$ and $F^S_m(D \wedge_C D)$ are $\mathbb{F}_p$-vector spaces  and $\pi_0(B)/{p\pi_0(B)} = \mathbb{F}_p = \pi_0(D)/{p\pi_0(D)} $  as rings, the map on the $E^2$-pages identifies with the map 
\[\begin{tikzcd}
E^S_*(B \wedge_A B) \otimes_{\mathbb{F}_p} \THH_*(B; H\mathbb{F}_p) \ar{r} & F^S_*(D \wedge_C D) \otimes_{\mathbb{F}_p} \THH_*(D; H\mathbb{F}_p)
\end{tikzcd}\]
that is given by the obvious map on the first tensor factor and that is induced by the map 
\[\begin{tikzcd}
\THH(B; H\mathbb{F}_p) \ar{r}{\cong} & H\mathbb{F}_p \wedge_{B^e}^L B \ar{r} & H\mathbb{F}_p \wedge_{B^e} ^LD \ar{r} & H\mathbb{F}_p \wedge_{D^e}^L D
\end{tikzcd}\]
on the second tensor factor. 
Here, the $B^e$-module action on $H\mathbb{F}_p$  in the second smash product is given by 
\[\begin{tikzcd}
B^e \ar{r} & D^e \ar{r} & D \ar{r} & H \pi_0(D) \ar{r} & H\mathbb{F}_p 
\end{tikzcd}\]
and the isomorphism is defined using Remark \ref{uniems}. 
\end{lem}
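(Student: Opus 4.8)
The plan is to deduce the statement from the naturality of the Atiyah--Hirzebruch spectral sequence proved in Lemmas \ref{natati} and \ref{natatichanger}, combined with the naturality of the identifications that enter the construction in Theorem \ref{BrSS} and Lemma \ref{BrE2}. Recall that the Brun spectral sequence for the data $(A,B,E)$ is the Atiyah--Hirzebruch spectral sequence of Theorem \ref{AHss} built over the connective cofibrant commutative $S$-algebra $B^e$ (cofibrant as a coproduct of cofibrant objects, as in the proof of Lemma \ref{THHid}), with module $B$ and coefficient $B^e$-module $\Gamma^S E \wedge_S (B \wedge_A B)$, its abutment being identified with $E^S_*\THH(A;B)$ via Remark \ref{assocSR} and Lemma \ref{THHid}. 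I would factor the comparison map into two stages.

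First, keeping the ground ring $B^e$ fixed, I would apply Lemma \ref{natati} to the morphisms of $B^e$-modules $B \to D$ and $\Gamma^S E \wedge_S (B \wedge_A B) \to \Gamma^S F \wedge_S (D \wedge_C D)$, where $D$ and the target coefficient spectrum are viewed as $B^e$-modules by restriction along $B^e \to D^e$; these maps come from the commutative square together with $E \to F$, and one checks routinely that they are $B^e$-linear because $B \to D$ is a ring map. This yields a morphism of Atiyah--Hirzebruch spectral sequences converging to $\bigl(\Gamma^S E \wedge_S (B \wedge_A B)\bigr)^{B^e}_* B \to \bigl(\Gamma^S F \wedge_S (D \wedge_C D)\bigr)^{B^e}_* D$. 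Second, I would apply Lemma \ref{natatichanger} to the morphism $B^e \to D^e$ of connective cofibrant commutative $S$-algebras, with $D^e$-module $D$ and coefficient $\Gamma^S F \wedge_S (D \wedge_C D)$, producing a morphism of spectral sequences converging to the change-of-rings map $\bigl(\Gamma^S F \wedge_S (D \wedge_C D)\bigr)^{B^e}_* D \to \bigl(\Gamma^S F \wedge_S (D \wedge_C D)\bigr)^{D^e}_* D$. Composing the two morphisms gives the morphism of Brun spectral sequences; that its abutment is the map $E^S_*\THH(A;B) \to F^S_*\THH(C;D)$ follows from the naturality clause of Remark \ref{assocSR} together with Remarks \ref{natTHH} and \ref{natTHHid}.

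For the identification of the $E^2$-pages under the hypotheses that $E^S_m(B \wedge_A B)$ and $F^S_m(D \wedge_C D)$ are $\mathbb{F}_p$-vector spaces and $\pi_0(B)/{p\pi_0(B)} = \mathbb{F}_p = \pi_0(D)/{p\pi_0(D)}$, I would revisit the proof of Lemma \ref{BrE2} and check that each identification there is natural in the data. The K\"unneth map $\pi_0(B) \otimes \pi_0(B) \to \pi_0(B \wedge_S B)$ is a natural ring isomorphism, being the edge homomorphism of the Tor spectral sequence, so the factorization of the $B^e$-action on the coefficients through $\pi_0(B)/{p\pi_0(B)} = \mathbb{F}_p$ is natural; the realizations of $HE^S_m(B \wedge_A B)$ as $H\mathbb{F}_p$-modules are compatible with the map to $HF^S_m(D \wedge_C D)$ by the uniqueness of Eilenberg--Mac Lane spectra (Remark \ref{uniems}) and Lemma \ref{natati}; and the base-change isomorphism $HE^S_m(B \wedge_A B) \wedge_{B^e} \Gamma^{B^e} B \cong HE^S_m(B \wedge_A B) \wedge_{H\mathbb{F}_p} (H\mathbb{F}_p \wedge_{B^e} \Gamma^{B^e} B)$ together with the Tor spectral sequence is natural. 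Tracing these through, the induced map on $E^2$-pages becomes the obvious map $E^S_*(B \wedge_A B) \to F^S_*(D \wedge_C D)$ on the first tensor factor and the map induced by $H\mathbb{F}_p \wedge_{B^e}^L B \to H\mathbb{F}_p \wedge_{B^e}^L D \to H\mathbb{F}_p \wedge_{D^e}^L D$ on the second, exactly as asserted; the two displayed maps here are precisely the contributions of the two stages of the previous paragraph.

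The main obstacle I anticipate is the bookkeeping around the change of ground ring $B^e \to D^e$: Lemma \ref{natatichanger} requires choosing compatible cell and CW approximations and replacing the structure maps by skeleton-preserving ones, and one must simultaneously keep track of the several distinct $B^e$-module structures on $H\mathbb{F}_p$ (pulled back along $B^e \to H\pi_0(B^e) \to H\mathbb{F}_p$ versus along $B^e \to D^e \to D \to H\pi_0(D) \to H\mathbb{F}_p$) and identify them via Remark \ref{uniems} compatibly with the multiplicative structure. Verifying that the identifications of Theorem \ref{BrSS} and Lemma \ref{BrE2} are genuinely functorial, rather than merely isomorphisms for each fixed input, is the technical heart of the argument, but it is routine given Lemmas \ref{natati} and \ref{natatichanger}, Remarks \ref{natTHH}, \ref{natTHHid}, \ref{assocSR} and \ref{uniems}, and the explicit description of the $E^2$-identification recorded in the proof of Lemma \ref{BrE2}.
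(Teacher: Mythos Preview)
Your proposal is correct and follows essentially the same route as the paper: the two-stage factorization via Lemma~\ref{natati} (over $B^e$) followed by Lemma~\ref{natatichanger} (along $B^e \to D^e$), the identification of the abutment through Remarks~\ref{natTHH}, \ref{natTHHid}, \ref{assocSR}, and the tracing of the $E^2$-identification through the proof of Lemma~\ref{BrE2} using the uniqueness statement~(\ref{uniquems}) to compare the two $B^e$-module structures on $H\mathbb{F}_p$. The paper makes the last step slightly more explicit by introducing cell approximations $\Gamma^1$ and $\Gamma^2$ of $H\mathbb{F}_p$ for the two $H\mathbb{F}_p \wedge_S B^e$-module structures and writing the $E^2$-map as a composite $(f_2^m)_* \circ (f_1^m)_*$, but this is exactly the bookkeeping you already anticipated.
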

\begin{proof}
By Remarks \ref{natTHH}, \ref{natTHHid}  and \ref{assocSR} the morphism 
$E_*^S\THH(A;B) \to F_*^S\THH(C;D)$  corresponds to the composition of
\[\begin{tikzcd}
\bigl(\Gamma^SE \wedge_S(B \wedge_A B)\bigr) \wedge_{B^e}^L B \ar{r} & \bigl(\Gamma^SF \wedge_S(D \wedge_CD)\bigr) \wedge_{B^e}^L D  \end{tikzcd}\]
and 
\[\begin{tikzcd} 
\bigl(\Gamma^SF \wedge_S(D \wedge_CD)\bigr) \wedge_{B^e}^L D\ar{r} & \bigl(\Gamma^S F \wedge_S(D \wedge_CD)\bigr) \wedge_{D^e}^L D.  
\end{tikzcd}\]
By Lemma  \ref{natati} and Lemma \ref{natatichanger}  we have Atiyah-Hirzebruch spectral sequences converging to these maps. 

Now, assume that the additional condtitions are satisfied. We identify the map on $E^2$-pages. Recall from the proof of Lemma \ref{BrE2} that we identified 
\[
HE_m^S(B \wedge_A B) \wedge_{B^e} \Gamma^{B^e}B 
\]
with 
\[ HE^S_m(B \wedge_A B) \wedge_{H\mathbb{F}_p} H\mathbb{F}_p \wedge_{B^e} \Gamma^{B^e} B, \] 
where in the second smash  product $HE_m(B \wedge_A B)$ is realized as cell $H\mathbb{F}_p$-module and the $B^e$-action on $H\mathbb{F}_p$ is given by 
\begin{equation} \label{act1}
\begin{tikzcd}
B^e \ar{r} & H\pi_0(B^e)  \ar{r} & H \pi_0(B)  \ar{r} & H\mathbb{F}_p. 
\end{tikzcd}
\end{equation}
The analogous statement holds for $E$, $A$ and $B$ replaced by $F$, $C$ and $D$. 
We have a cell approximation $\Gamma^1 \xrightarrow{\sim} H\mathbb{F}_p$ of $H\mathbb{F}_p$   as $H\mathbb{F}_p \wedge_S B^e$-module, where the $B^e$-module action is given by 
(\ref{act1}). We also have a cell approximation $\Gamma^2 \xrightarrow{\sim} H\mathbb{F}_p$ of $H\mathbb{F}_p$   as $H\mathbb{F}_p \wedge_S B^e$-module, where the $B^e$-module action is given by 
\[\begin{tikzcd}
B^e \ar{r} & D^e \ar{r} & H\pi_0(D^e) \ar{r} & H\pi_0(D) \ar{r} & H \mathbb{F}_p.
\end{tikzcd}\]
By fact (\ref{uniquems}), $H\mathbb{F}_p$ equipped with the first $H\mathbb{F}_p \wedge_S B^e$-module structure is isomorphic in $\mathscr{D}_{H\mathbb{F}_p \wedge_S B^e}$  to $H\mathbb{F}_p$ equipped with the second module structure. The isomorphism is represented by a map $\Gamma^1 \to \Gamma^2$.  By (\ref{uniquems}) we have a map between the cell $H\mathbb{F}_p$-modules $HE^S_m(B \wedge_A B)$ and $HF_m^S(D \wedge_C D)$ inducing the obvious map in homotopy.  The maps 
$H\mathbb{F}_p  \xleftarrow{} \Gamma^1 \to \Gamma^2 \to H\mathbb{F}_p$, $HE^S_m(B \wedge_A B) \to HF^S_m(D \wedge_C D)$ and $\Gamma^{B^e} B \to \Gamma^{B^e} D$ induce a map $(f_1^m)_*$
\[\begin{tikzcd}
 \pi_*\bigl(HE^S_m(B \wedge_A B) \wedge_{H\mathbb{F}_p} H\mathbb{F}_p \wedge_{B^e} \Gamma^{B^e} B\bigr) \ar{r} & \pi_*\bigl(HF^S_m(D \wedge_C D) \wedge_{H\mathbb{F}_p} H\mathbb{F}_p \wedge_{B^e} \Gamma^{B^e}D\bigr).
\end{tikzcd}\]
We also have a map $(f_2^m)_*$
\[\begin{tikzcd}
\pi_*\bigl(HF^S_m(D \wedge_C D) \wedge_{H\mathbb{F}_p} H\mathbb{F}_p \wedge_{B^e} \Gamma^{B^e}D\bigr) \ar{r} & \pi_*\bigl( HF^S_m(D \wedge_C D) \wedge_{H\mathbb{F}_p} H\mathbb{F}_p \wedge_{D^e} \Gamma^{D^e} D \bigr),
\end{tikzcd}\]
induced by a map $\Gamma^{B^e}D \to \Gamma^{D^e}D$   in $\mathscr{M}_{B^e}$ lifting the identity up to homotopy. 
Using (\ref{uniquems}) it follows that the map on $E^2$-pages is given by the maps $(f_2^m)_* \circ (f_1^m)_*$. 
In the proof of Lemma \ref{BrE2} we also did the identifications 
\begin{eqnarray*}
 & & \pi_*\bigl(HE^S_m(B \wedge_A B) \wedge_{H\mathbb{F}_p} H\mathbb{F}_p \wedge_{B^e} \Gamma^{B^e} B\bigr) \\
& \cong & E_m^S(B \wedge_A B) \otimes_{\mathbb{F}_p} \pi_*(H \mathbb{F}_p  \wedge_{B^e} \Gamma^{B^e} B )\\
& \cong & E^S_m(B \wedge_A B) \otimes_{\mathbb{F}_p} \pi_*(H \mathbb{F}_p  \wedge_{B^e}^L B) \\
& \cong & E_m^S(B \wedge_A B) \otimes_{\mathbb{F}_p} \THH_*(B; H\mathbb{F}_p) 
\end{eqnarray*}
and analogously for  $F$, $C$, and $D$.  Again by using (\ref{uniquems}), one sees  that under these isomorphisms the maps $(f_2^m) \circ (f_1^m)_*$  corresponds to map in the statement of the lemma. 
\end{proof}

\begin{rmk}
One easily sees that the Brun spectral sequence is functorial with respect to morphisms of $S$-ring spectra and the vertical maps in diagrams as in Lemma \ref{nat}. 
\end{rmk}


\section{The topological Hochschild homology of \texorpdfstring{$\ku$}{ku}} \label{kusec}
In \cite{Au} Ausoni computed the mod $(p, v_1)$ homotopy of the topological Hochschild homology of $p$-completed connective complex $K$-theory for an odd prime $p$. Ausoni used the B\"okstedt spectral sequence. In this chapter we present a shorter computation for $p \geq 5$ using the Brun spectral sequence. 

We take for the ring spectrum $E$ in the Brun spectral sequence  the mod $p$ Moore spectrum and the mod $(p,v_1)$ Toda-Smith  complex. 
Recall the following:

\begin{rmk}
Let $p$ be a prime with $p \geq 5$. Let $V(0)$ be the mod $p$ Moore spectrum and let $V(1)$ be the mod $(p,v_1)$ Toda-Smith complex. We can suppose that $V(0)$ and $V(1)$ are cell $S$-modules.   
Because of $p \geq 5$ we have the following properties (see \cite{OkaMoore},\cite{OkaMult} and \cite{Okafewcells}):
\begin{itemize}
\item The $S$-modules $V(0)$ and $V(1)$ are associative and commutative $S$-ring spectra. 
\item We have distinguished triangles in $\mathscr{D}_S$
      \begin{equation} \label{dist1}
      \begin{tikzcd}
 S \ar{r}{p \cdot \id} & S \ar{r} & V(0) \to  \Sigma  S 
 \end{tikzcd}
 \end{equation}
 and 
 \begin{equation*} \label{dist2}
 \begin{tikzcd}
 \Sigma^{2p-2}V(0) \ar{r} & V(0) \ar{r} & V(1) \ar{r} & \Sigma^{2p-1} V(0). 
\end{tikzcd}
\end{equation*} 
The maps $S \to V(0)$ and $V(0) \to V(1)$ are maps of $S$-ring spectra. 
\end{itemize}
\end{rmk}
Let $p$ be a prime with $p \geq 5$. 
Let $\ku$ be $p$-completed connective complex $\K$-theory. 
Following Ausoni we take as a commutative $S$-algebra model for $\ku$ the $p$-completion of the algebraic $K$-theory of $\cup_{i \geq 0}\mathbb{F}_{l^{p^i(p-1)}}$, $\K(\cup \mathbb{F}_{l^{p^i(p-1)}})_p$,  where $l$ is a prime that generates $(\mathbb{Z}/{p^2})^*$. 
By \cite[Example VI.5.2]{MayRingSp},\cite{MayBip}, \cite{Maywhat} and \cite[Corollary II.3.6]{EKMM}  algebraic $K$-theory can be realized as a functor from commutative rings to commutative $S$-algebras. By $p$-completion we mean Bousfield localization with respect to $V(0)$ as defined in \cite[Section VIII.1]{EKMM}.  
By the proofs of \cite[Lemma VII. 5.8]{EKMM}, \cite[Lemma VII.5.2]{EKMM}  and \cite[Theorem VIII. 2.2]{EKMM}
 the $p$-completion of a commutative $S$-algebra can be constructed as a commutative $S$-algebra in such a way that we get a functor $(-)_p: \mathscr{C}\mathscr{A}_S \to \mathscr{C}\mathscr{A}_S$ with values in cofibrant commutative $S$-algebras.

We have 
\[\pi_*(\ku) = \mathbb{Z}_p[u]\]
as rings, where $|u| = 2$.  Using the cofibre sequence (\ref{dist1}) one gets \[  V(0)_*^S\ku = P(u), \] 
 where $P(-)$ denotes the polynomial algebra over $\mathbb{F}_p$. By  \cite{Au} we have 
\[ V(1)_*^S\ku = P_{p-1}(u).\]  Here, $P_{p-1}(-) = P(u)/{(u^{p-1})}$ is the truncated polynomial algebra over $\mathbb{F}_p$. The map $V(0)_*^S\ku \to V(1)_*^S\ku$ is the quotient map. 
By \cite[Theorem 2.5]{Au} we have 
\[{H\mathbb{F}_p}_*^S\ku = P_{p-1}(u) \otimes P(\bar{\xi}_1, \bar{\xi}_2, \dots) \otimes E(\bar{\tau}_2, \dots),\]
where the tensor product is taken over $\mathbb{F}_p$ and where $E(-)$ denotes the exterior algebra over $\mathbb{F}_p$. We have  $|\bar{\xi}_i| = 2p^i-2$ and $|\bar{\tau}_i| = 2p^i-1$, and  $u$ is the image of $u \in \pi_2(\ku)$ under the Hurewicz map
\[\begin{tikzcd}
  \pi_*(\ku) \ar{r}{\cong} & \pi_*(S \wedge_S^L \ku) \ar{r} & \pi_*(H\mathbb{F}_p \wedge_S^L \ku). 
  \end{tikzcd} \] 
 Recall that $\ell := \K(\cup \mathbb{F}_{l^{p^i}})_p$ is a commutative $S$-algebra model for the $p$-completed connective Adams sumand (\cite[Proposition 9.2]{McSt}, \cite[Section 2]{BakRichUni}).  The map $\pi_*(\ell) \to \pi_*(\ku)$ induced by the inclusion of fields identifies $\pi_*(\ell)$ with the subring $\mathbb{Z}_p[u^{p-1}]$ of $\pi_*(\ku)$. 
  We have $V(0)_*^S\ell = P(u^{p-1})$ and $V(1)_*^S\ell = \mathbb{F}_p$.

In order to calculate $V(1)_*^S\THH(\ku)$ we proceed as follows: 

We factor the map $\ku \to H\mathbb{Z}_p$ in the category of commutative $S$-algebras as 
\[\begin{tikzcd}
\ku  \ar[tail]{r} & \hat{H} \mathbb{Z}_p \ar[two heads]{r}{\sim} & H\mathbb{Z}_p. 
\end{tikzcd}\]
To simplify the notation we will write $H\mathbb{Z}_p$ for $\hat{H}\mathbb{Z}_p$  most of the time. 

We have a Brun spectral sequence 
\begin{equation} \label{brunku1}
 E^2_{*,*} = \THH_*(H\Zp; HV(0)_*^S(H\Zp \wedge_{\ku} H\Zp))  \Longrightarrow V(0)_*^S\THH(\ku; H\Zp) 
 \end{equation} 
and by Lemma \ref{BrE2} the $E^2$-page can be identified with 
\[E^2_{*,*} =  V(0)_*^S(H\Zp \wedge_{\ku} H\Zp) \otimes \THH_*(H\Zp; H\Fp).\]
We compute this spectral sequence in Subsection \ref{kurel}. 
In Subsection \ref{kuabs} we compute the Brun spectral sequence 
 \begin{equation} \label{brunku2}
 E^2_{*,*} = \THH_*(\ku; HV(1)_*^S\ku) \Longrightarrow V(1)_*^S\THH(\ku).
 \end{equation}
Again by Lemma \ref{BrE2}, the $E^2$-page can be identified with 
\begin{eqnarray*}
 E^2_{*,*}  =   V(1)_*^S\ku \otimes \THH_*(\ku; H\Fp) \cong   P_{p-1}(u) \otimes \THH_*(\ku; H\F_p).  
 \end{eqnarray*} 
 Note that by Remark \ref{assocSR} we have an isomorphism of $S$-ring spectra 
 \begin{eqnarray*}
 V(0) \wedge_S^L \THH(\ku; H\mathbb{Z}_p) & = & V(0) \wedge_S^L (H\mathbb{Z}_p \wedge_{\ku^e}^L \ku)  \\
    & \cong & (V(0) \wedge_S H \mathbb{Z}_p) \wedge_{\ku^e}^L \ku 
 \end{eqnarray*}
Since the $H\mathbb{Z}_p$-ring spectra $V(0) \wedge_S H\mathbb{Z}_p$ and $H\mathbb{F}_p$ are isomorphic by Remark \ref{uniems}, this is isomorphic to $\THH(\ku; H\mathbb{F}_p)$. 
Thus,  the abutment of the spectral sequence (\ref{brunku1})  
computes the input of the spectral sequence  (\ref{brunku2}).

\begin{nota}
An infinite cycle in a spectral sequence is a class $b$ such that we have $d^s(b) = 0$ for all $s$. 
A permanent cycle  is an infinite cycle  that is not in the image of  $d^s$ for any $s$.
We write $b \doteq c$ if $b$ and $c$ are equal up to multiplication by a unit in $\mathbb{F}_p$.
\end{nota}
\subsection{The mod \texorpdfstring{$p$}{p} homotopy of \texorpdfstring{$\THH(\ku; H\mathbb{Z}_{p})$}{THH(ku,HZp)}} \label{kurel}

In this subsection we calculate the $\F_p$-algebra $V(0)_*^S\THH_*(\ku, H\mathbb{Z}_p)$ using the spectral sequence (\ref{brunku1}). 
By B\"okstedt's computations \cite{Bo2}, we have 
\[\THH_*(H\Zp; H\F_p) = E(\lambda_1) \otimes P(\mu_1)\]
 with $|\lambda_1| = 2p-1$ and $|\mu_1| = 2p$.
\begin{lem} \label{tensorku} 
We have $V(0)_*^S(H\Zp \wedge_{\ku} H\Zp) = E(\sigma u)$ with $|\sigma u| = 3$.
\end{lem}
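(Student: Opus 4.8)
The plan is to compute the homotopy of the smash product by the Künneth spectral sequence, as announced in the introduction. First I would rewrite the left-hand side. Since $\ku$ is a cofibrant commutative $S$-algebra and $H\Zp$ (meaning the cofibrant model $\hat H\Zp$ fixed above) is a cofibrant commutative $\ku$-algebra, Lemma~\ref{cof1en} identifies the point-set smash product $H\Zp \wedge_{\ku} H\Zp$ with $H\Zp \wedge_{\ku}^L H\Zp$; then Remark~\ref{assocSR} yields an isomorphism of $S$-ring spectra
\[
 V(0) \wedge_S^L \bigl(H\Zp \wedge_{\ku}^L H\Zp\bigr) \;\cong\; (V(0) \wedge_S H\Zp) \wedge_{\ku}^L H\Zp .
\]
Using the triangle~(\ref{dist1}) and that multiplication by $p$ is injective on $\Zp$, the homotopy of $V(0) \wedge_S H\Zp$ is concentrated in degree $0$, where it is $\Fp$ with its usual ring structure; so by Remark~\ref{uniems} we may replace $V(0) \wedge_S H\Zp$ by $\HFp$ (as $H\Zp$-ring spectra, hence as $\ku$-ring spectra), giving an isomorphism of graded rings
\[
 V(0)_*^S\bigl(H\Zp \wedge_{\ku} H\Zp\bigr) \;\cong\; \pi_*\bigl(\HFp \wedge_{\ku}^L H\Zp\bigr),
\]
the right-hand side carrying a ring structure because $\HFp$ and $H\Zp$ are commutative $\ku$-algebras.

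Next I would run the Künneth spectral sequence
\[
 E^2_{s,t} = \Tor^{\pi_*(\ku)}_{s,t}(\Fp, \Zp) \;\Longrightarrow\; \pi_{s+t}\bigl(\HFp \wedge_{\ku}^L H\Zp\bigr),
\]
in which $\pi_*(\ku) = \Zp[u]$ acts on $\Fp$ through $u \mapsto 0$, $p \mapsto 0$ and on $\Zp$ through $u \mapsto 0$. As $u$ is a non-zero-divisor, $\Zp = \pi_*(\ku)/(u)$ has the length-one free resolution $0 \to \Sigma^2\pi_*(\ku) \xrightarrow{\,\cdot u\,} \pi_*(\ku) \to \Zp \to 0$; tensoring with $\Fp$ over $\pi_*(\ku)$ gives the two-term complex $\Sigma^2\Fp \xrightarrow{\,0\,} \Fp$, since the differential is multiplication by $u$, which is zero on $\Fp$. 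Hence $\Tor^{\pi_*(\ku)}_0(\Fp, \Zp) = \Fp$ in internal degree $0$, $\Tor^{\pi_*(\ku)}_1(\Fp, \Zp) = \Fp$ in internal degree $2$, and $\Tor^{\pi_*(\ku)}_s = 0$ for $s \geq 2$. The $E^2$-page therefore sits in the homological columns $s = 0$ and $s = 1$ only, so every differential vanishes, the spectral sequence collapses, and it converges strongly (being finite in each total degree). Consequently $\pi_*(\HFp \wedge_{\ku}^L H\Zp)$ is $\Fp$ in degrees $0$ and $3$ and is zero otherwise; write $\sigma u$ for a generator of the degree-$3$ part.

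Finally I would read off the ring structure, which is forced: $(\sigma u)^2$ lies in $\pi_6 = 0$, so $(\sigma u)^2 = 0$, and the degree-$0$ part is generated by the unit; thus $V(0)_*^S(H\Zp \wedge_{\ku} H\Zp) \cong E(\sigma u)$ with $|\sigma u| = 3$. I do not expect a genuine obstacle in this argument. The two places that require a little care are the identifications in the first paragraph --- moving $V(0)$ past $\wedge_{\ku}$ with Remark~\ref{assocSR} and recognizing $V(0) \wedge_S H\Zp$ with Remark~\ref{uniems} --- and the bookkeeping of the $\pi_*(\ku)$-module structures: it is exactly because $u$ acts as zero on $\Fp$ that the length-one resolution of $\Zp$ contributes a nonzero $\Tor_1$, rather than the whole computation collapsing into degree $0$.
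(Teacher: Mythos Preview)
Your proposal is correct and follows essentially the same route as the paper: reduce to $\pi_*(\HFp \wedge_{\ku}^L H\Zp)$ via Lemma~\ref{cof1en} and Remark~\ref{assocSR}, compute $\Tor^{\ku_*}_{*,*}(\Fp,\Zp)$ from the short resolution $0 \to \Sigma^2\ku_* \xrightarrow{u} \ku_* \to \Zp \to 0$, observe the K\"unneth spectral sequence collapses, and determine the ring structure by degree reasons. You spell out the identification $V(0)\wedge_S H\Zp \simeq \HFp$ via the triangle~(\ref{dist1}) and Remark~\ref{uniems} a bit more explicitly than the paper, but this is the same argument.
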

\begin{proof}
Since $H\mathbb{Z}_p$ is a cofibrant commutative $\ku$-algebra, we have
\[ V(0) \wedge_S^L(H\mathbb{Z}_p \wedge_{\ku} H\mathbb{Z}_p) \cong V(0) \wedge_S^L(H\mathbb{Z}_p \wedge_{\ku}^L H\mathbb{Z}_p)\] as $S$-ring spectra by Lemma \ref{cof1en}.  
Using Remark \ref{assocSR} one gets that this is isomorphic  to the $S$-ring spectrum $H\mathbb{F}_p \wedge_{\ku}^L H\mathbb{Z}_p$. 

By \cite{LewMand} we have a strongly convergent spectral sequence of the form
\[ E^2_{n,m} = \Tor^{{\ku}_*}_{n,m}(\Fp, \Zp) \Longrightarrow \pi_{n+m}(H\Fp \wedge_{\ku}^L H\Zp).   \]
We have the following free resolution of $\Zp$ as a ${\ku}_*$-module:
\[0 \to \Sigma^2{\ku}_* \xrightarrow{u} {\ku}_* \to \Zp \to 0.\]
Thus, the $E^2$-page is $\Fp$ in the bidegrees $(0,0)$ and $(1,2)$ and zero in all other bidegrees. 
There cannot be any differentials. Since there is only one graded-commutative $\Fp$-algebra that has  $\Fp \oplus \Sigma^3\Fp$ as an underlying $\mathbb{F}_p$-vector space, this proves the lemma. 

\end{proof}
Thus, the $E^2$-page of the spectral sequence (\ref{brunku1}) is 
\[E(\sigma u) \otimes E(\lambda_1) \otimes P(\mu_1), \]
where $\sigma u$, $\lambda_1$ and $\mu_1$ have the bidegrees  $(0, 3)$,  $(2p-1,0)$ and  $(2p,0)$.

\begin{lem}
The spectral sequence (\ref{brunku1})  collapses at the $E^2$-page. 
\end{lem}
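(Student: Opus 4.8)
The plan is to show that all $d^r$ vanish by a degree/lacunarity argument on the $E^2$-page
\[
E^2_{*,*} = E(\sigma u) \otimes_{\mathbb{F}_p} E(\lambda_1) \otimes_{\mathbb{F}_p} P(\mu_1),
\]
where the bidegrees are $|\sigma u| = (0,3)$, $|\lambda_1| = (2p-1,0)$ and $|\mu_1| = (2p,0)$. The differential $d^r$ has bidegree $(-r, r-1)$, so it decreases the first (filtration) degree by $r$ and increases the internal degree by $r-1$, for a net change of $-1$ in total degree. The first step is to note that the $E^2$-page is concentrated in filtration degrees $0$ and $2p-1$ only: every monomial $\sigma u^{\varepsilon_0} \lambda_1^{\varepsilon_1} \mu_1^j$ with $\varepsilon_0,\varepsilon_1 \in \{0,1\}$, $j \geq 0$ has first degree $(2p-1)\varepsilon_1$, which is $0$ or $2p-1$. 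Hence for $2 \leq r \leq 2p-2$ and for $r \geq 2p$ the differential $d^r$ is automatically zero, because there is no pair of nonzero columns at horizontal distance exactly $r$. The only possibly nonzero differential is $d^{2p-1}$, which would map the column in filtration degree $2p-1$ to the column in filtration degree $0$.

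Next I would rule out $d^{2p-1}$. A class in filtration degree $2p-1$ has the form $\lambda_1 \cdot x$ with $x \in E(\sigma u) \otimes P(\mu_1)$, so of internal degree $0$; its total degree is $(2p-1) + |x|$, where $|x| \in \{0,3\} + 2p\mathbb{Z}_{\geq 0}$, i.e. $|x| \equiv 0$ or $3 \pmod{2p}$. Its image under $d^{2p-1}$ would lie in filtration degree $0$, internal degree $(2p-2) + |x|$, and total degree $(2p-1) + |x| - 1 = 2p - 2 + |x|$. But the target column in filtration degree $0$ is $E(\sigma u) \otimes P(\mu_1)$, whose elements have total degrees in $\{0,3\} + 2p\mathbb{Z}_{\geq 0}$, i.e.\ congruent to $0$ or $3$ modulo $2p$. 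Meanwhile $2p - 2 + |x|$ is congruent to $2p-2 \equiv -2$ or $2p+1 \equiv 1 \pmod{2p}$. Since $p \geq 5$ we have $-2, 1 \not\equiv 0, 3 \pmod{2p}$ (this uses $2p \geq 10 > 5$, so the residues $0,1,3,2p-2$ are genuinely distinct), so the target group is zero. Therefore $d^{2p-1} = 0$ as well, and the spectral sequence collapses at $E^2$.

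The only mild subtlety — which I expect to be the ``hard'' part, though it is really just bookkeeping — is making sure the internal-degree grading on the $E^2$-page is exactly the one coming from the $\THH$-degree of $\THH_*(H\mathbb{Z}_p; H\mathbb{F}_p) = E(\lambda_1)\otimes P(\mu_1)$ tensored with the internal (homotopy) degree of $V(0)_*^S(H\mathbb{Z}_p\wedge_{\ku}H\mathbb{Z}_p) = E(\sigma u)$, as identified in Lemma~\ref{BrE2} and Lemma~\ref{tensorku}: that is, $\sigma u$ contributes only to the second (internal) grading while $\lambda_1$ contributes only to the first ($\THH$) grading, and all three generators sit in the bidegrees listed above. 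Once this is in hand, the collapse is forced by the two observations above: the $E^2$-page occupies only the filtration lines $0$ and $2p-1$, and the total degrees available on those two lines are incompatible modulo $2p$ with a differential of total-degree $-1$ between them when $p \geq 5$.
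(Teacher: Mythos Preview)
There is a genuine computational error that invalidates the argument as written. You claim that the monomial $\sigma u^{\varepsilon_0}\lambda_1^{\varepsilon_1}\mu_1^j$ has first (filtration) degree $(2p-1)\varepsilon_1$, but $\mu_1$ sits in bidegree $(2p,0)$, so the correct first degree is $(2p-1)\varepsilon_1 + 2p j$. Hence the $E^2$-page is \emph{not} concentrated in the two columns $0$ and $2p-1$; for example $\mu_1$ itself lies in column $2p$, $\lambda_1\mu_1$ in column $4p-1$, and so on. Consequently the step ``for $2\le r\le 2p-2$ and $r\ge 2p$ the differential $d^r$ is automatically zero'' is unjustified, and the residue argument you give for $d^{2p-1}$ is likewise built on the wrong description of the source and target columns.

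The lacunarity idea can be salvaged, but you should run it on the \emph{second} grading instead. Since $\sigma u$ is the only generator with nonzero internal degree, every class has internal degree $0$ or $3$; as $d^r$ raises the internal degree by $r-1$, the only possibly nonzero differential is $d^4$ (from row $0$ to row $3$). One then rules out $d^4$ by a filtration-degree congruence: a nonzero source in row $0$ has first degree $n\equiv 0$ or $-1\pmod{2p}$, while a nonzero target in row $3$ requires $n-4\equiv 0$ or $-1\pmod{2p}$, i.e.\ $n\equiv 3$ or $4\pmod{2p}$; these sets are disjoint for $p\ge 3$. Alternatively, and this is what the paper does, one simply checks that each algebra generator $\sigma u$, $\lambda_1$, $\mu_1$ is an infinite cycle by inspecting the (very sparse) targets in total degrees $2$, $2p-2$, and $2p-1$, and then invokes multiplicativity.
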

\begin{proof}
The class $\sigma u$ is an infinite cycle, because it lies in column zero. The class $\lambda_1$ is an infinite cycle, because the $E^2$-page is zero in total degree $2p-2$. The class $\mu_1$ is also an infinite cycle: In total degree  $2p-1$ the 
$E^2$-page is given by $\Fp\{\lambda_1\}$. The differentials of $\mu_1$ lies in columns $\leq 2p-2$ and $\lambda_1 $ lies in 
 column $2p-1$. Thus, the differentials of $\mu_1$ cannot hit $\lambda_1$. 
\end{proof}

\begin{lem} \label{kuBR1}
We have 
\[ V(0)_*^S\THH(\ku; H\mathbb{Z}_p) = E(\sigma u, \lambda_1) \otimes P(\mu_1)\]
with 
$|\sigma u| = 3$, $|\lambda_1| = 2p-1$ and $|\mu_1| = 2p$. 
\end{lem}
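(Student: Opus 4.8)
The plan is to combine the collapse of (\ref{brunku1}) with a standard argument ruling out multiplicative and additive extensions. By the previous lemma the spectral sequence (\ref{brunku1}) collapses at $E^2$, so $E^{\infty}_{*,*} = E^2_{*,*} = E(\sigma u)\otimes_{\mathbb{F}_p} E(\lambda_1)\otimes_{\mathbb{F}_p} P(\mu_1)$ as bigraded $\mathbb{F}_p$-algebras, with $\sigma u$, $\lambda_1$, $\mu_1$ in bidegrees $(0,3)$, $(2p-1,0)$, $(2p,0)$. Since (\ref{brunku1}) is multiplicative (Theorem \ref{BrSS}) and strongly convergent, $E^{\infty}_{*,*}$ is the associated graded of $V(0)_*^S\THH(\ku; H\mathbb{Z}_p)$ for an exhaustive filtration that is bounded below in each degree, with filtration degree the first index. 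It thus remains to lift the three generators and to check that they satisfy exactly the relations of $E(\sigma u,\lambda_1)\otimes P(\mu_1)$.

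Concretely, I would pick representatives $\widetilde{\sigma u}\in V(0)_3^S\THH(\ku; H\mathbb{Z}_p)$, $\widetilde{\lambda_1}\in V(0)_{2p-1}^S\THH(\ku; H\mathbb{Z}_p)$ and $\widetilde{\mu_1}\in V(0)_{2p}^S\THH(\ku; H\mathbb{Z}_p)$ of the $E^{\infty}$-classes $\sigma u$, $\lambda_1$, $\mu_1$, lying in filtration degrees $0$, $2p-1$, $2p$ respectively. Because $V(0)$ is an $S$-ring spectrum and $p$ is odd, multiplication by $p$ on $V(0)$ is null, so $V(0)_*^S\THH(\ku; H\mathbb{Z}_p)$ is an $\mathbb{F}_p$-vector space; since $|\widetilde{\sigma u}|=3$ and $|\widetilde{\lambda_1}|=2p-1$ are odd, graded commutativity gives $2(\widetilde{\sigma u})^2=0=2(\widetilde{\lambda_1})^2$, hence $(\widetilde{\sigma u})^2=0=(\widetilde{\lambda_1})^2$. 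Therefore there is a unique morphism of graded $\mathbb{F}_p$-algebras $\Phi\colon E(\sigma u,\lambda_1)\otimes_{\mathbb{F}_p}P(\mu_1)\to V(0)_*^S\THH(\ku; H\mathbb{Z}_p)$ carrying the generators to $\widetilde{\sigma u}$, $\widetilde{\lambda_1}$, $\widetilde{\mu_1}$. I would filter the source by giving the monomial $(\sigma u)^a\lambda_1^b\mu_1^c$ the weight $(2p-1)b+2pc$; this is a multiplicative, exhaustive, bounded-below filtration, $\Phi$ is filtration-preserving, and the associated graded of $\Phi$ is the map of bigraded algebras sending the generators to the classes $\sigma u,\lambda_1,\mu_1\in E^{\infty}_{*,*}$, which is an isomorphism. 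A filtration-preserving homomorphism of graded modules with exhaustive, bounded-below filtrations that induces an isomorphism on associated graded is an isomorphism, so $\Phi$ is an isomorphism; this is the assertion, with $|\sigma u|=3$, $|\lambda_1|=2p-1$, $|\mu_1|=2p$ the total degrees.

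The main obstacle is the absence of multiplicative extensions: the parity argument kills the two exterior-square relations, and the filtered-isomorphism criterion takes care of everything else, including additive extensions. The point requiring a little care is that $V(0)_*^S(-)$ genuinely takes values in $\mathbb{F}_p$-vector spaces, so that $2$ is invertible and the odd-degree squares vanish; this uses that $V(0)$ is a ring spectrum, which is available for $p\ge 5$. The remaining verifications — exhaustiveness and boundedness of the two filtrations, and the identification of the associated graded of $\Phi$ — are routine bookkeeping with the bidegrees of $\sigma u$, $\lambda_1$ and $\mu_1$, using that for $p\ge 5$ these degrees are large enough that in each total degree there is at most one basis monomial of a given filtration weight.
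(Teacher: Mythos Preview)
Your proof is correct and follows essentially the same approach as the paper: the paper's proof is a one-liner observing that because $E^{\infty}=E^2$ is a free graded-commutative $\mathbb{F}_p$-algebra there can be no multiplicative extensions, and your argument is a careful unpacking of exactly that standard fact (lift generators, map from the free object, compare associated gradeds). The extra care you take with the odd-degree squares and the filtration bookkeeping is fine but not strictly necessary beyond what the phrase ``free graded-commutative $\mathbb{F}_p$-algebra'' already encodes.
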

\begin{proof}
Since the $E^{\infty} = E^2$-page is a free graded-commutative $\mathbb{F}_p$-algebra, there cannot be any multiplicative extensions. 
\end{proof}

\subsection{The \texorpdfstring{$V(1)$}{V(1)}-homotopy of \texorpdfstring{$\THH(\ku)$}{THH(ku)}} \label{kuabs}
In this subsection we calculate $V(1)_*\THH(\ku)$ via  the spectral sequence (\ref{brunku2}). 
By Lemma \ref{kuBR1} we have 
\[ E^2_{*,*} = P_{p-1}(u) \otimes  E(\sigma u, \lambda_1) \otimes P(\mu_1),    \]
where $u$, $\sigma u$, $\lambda_1$ and $\mu_1$ have  bidegrees $(0,2)$, $(3,0)$, $(2p-1,0)$ and $(2p,0)$ respectively. 

\begin{lem} \label{kudiffhilf}
There are  non-trivial elements $u' \in V(1)_2^S\THH(\ku)$ and $\sigma u' \in V(1)_3^S\THH(\ku)$ satisfying the equation 
\[ (u')^{p-2} \sigma u' = 0.\]
\end{lem}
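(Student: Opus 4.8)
The plan is to produce $u'$ and $\sigma u'$ as permanent cycles in the spectral sequence (\ref{brunku2}) detecting the classes $u$ and $\sigma u$ on the $E^2$-page, and then to deduce the relation $(u')^{p-2}\sigma u' = 0$ from a corresponding relation already visible before passing to the abutment. First I would observe that $u \in E^2_{0,2} = V(1)_*^S\ku \otimes \THH_0(\ku;H\mathbb{F}_p)$ lies in filtration $0$, so it is automatically a permanent cycle (no differential can leave column $0$ going left); let $u' \in V(1)_2^S\THH(\ku)$ be a class detecting it. Similarly $\sigma u \in E^2_{3,0}$ lies in the bottom row $m=0$; a differential $d^r$ decreases $n$ by $r$ and increases $m$ by $r-1 \geq 1$, so $\sigma u$ is a permanent cycle, and I let $\sigma u' \in V(1)_3^S\THH(\ku)$ detect it. Both $u'$ and $\sigma u'$ are non-trivial since they are detected by non-zero classes in $E^{\infty}$.

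Next I would locate the relation. On the $E^2$-page the product $u^{p-2}\sigma u$ lies in bidegree $(3, 2(p-2)) = (3, 2p-4)$ and is a non-zero element of $P_{p-1}(u)\otimes E(\sigma u,\lambda_1)\otimes P(\mu_1)$ (since $p-2 \leq p-2 < p-1$, the power $u^{p-2}$ survives in the truncated polynomial algebra). However $u^{p-1} = 0$ in $V(1)_*^S\ku = P_{p-1}(u)$, hence $u^{p-1}\sigma u = 0$ on $E^2$. The idea is that $u^{p-2}\sigma u$, while non-zero on $E^2$, becomes a target of a differential: the key input — exactly as in the earlier analysis of the $\ku$-computation using $u^{p-1}=0$ in $(H\mathbb{F}_p)_*\ku$ — is that $\mu_1$ is not a permanent cycle but supports a differential $d^{?}(\mu_1) \doteq u^{p-2}\sigma u$ (degree check: $|\mu_1| = 2p$, the target in total degree $2p-1$ is $u^{p-2}\sigma u$ in bidegree $(3,2p-4)$, so this is a $d^r$ with $r$ determined by the column drop from $2p$ to $3$, consistent with an Atiyah–Hirzebruch-type pattern). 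Consequently $u^{p-2}\sigma u$ is a boundary, so it represents $0$ in $E^{\infty}$ and in the associated graded of $V(1)_*^S\THH(\ku)$; thus the product $(u')^{p-2}\sigma u'$, which is detected by $u^{p-2}\sigma u$ under the multiplicativity of the spectral sequence (Theorem \ref{BrSS}), lies in higher filtration, and a dimension count in total degree $2p-1$ shows the only class of higher filtration is $0$, giving $(u')^{p-2}\sigma u' = 0$.

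The main obstacle is establishing the differential $d(\mu_1) \doteq u^{p-2}\sigma u$ precisely (including that the coefficient is a unit, not $0$); this is the content of the subsequent computation in Subsection \ref{kuabs}, and is presumably carried out by comparison via Lemma \ref{nat} with the better-understood case of the Adams summand $\ell$ (where $V(1)_*^S\ell = \mathbb{F}_p$, so $u^{p-1}$ genuinely vanishes for structural reasons) or with the mod $p$ homology Bökstedt spectral sequence, together with the relation $u^{p-1}=0$ in $(H\mathbb{F}_p)_*\ku$. The remaining steps — the two filtration/degree arguments identifying permanent cycles and the "no room in higher filtration" argument in degree $2p-1$ — are routine bookkeeping with the bidegrees of $u$, $\sigma u$, $\lambda_1$, $\mu_1$ listed above.
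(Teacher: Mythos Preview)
Your approach is circular. You propose to deduce $(u')^{p-2}\sigma u' = 0$ from the differential $d^{2p-3}(\mu_1) \doteq u^{p-2}\sigma u$ in the spectral sequence (\ref{brunku2}), and you say that establishing this differential ``is the content of the subsequent computation in Subsection \ref{kuabs}''. But look at that subsequent lemma: the paper proves the differential by \emph{invoking} Lemma \ref{kudiffhilf}. The relation $(u')^{p-2}\sigma u' = 0$ is the input that forces $u^{p-2}\sigma u$ to be a boundary on $E^\infty$, and $\mu_1$ is then identified as the only possible source. So you have reversed cause and effect: the lemma is logically prior to the differential, not a consequence of it. Your closing remark that the differential might be obtained independently by comparison with $\ell$ or with the B\"okstedt spectral sequence is in the right spirit, but you do not carry it out, and comparison with $\ell$ alone would not produce a nonzero class $u^{p-2}\sigma u$ since $V(1)_*^S\ell = \mathbb{F}_p$.

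The paper's argument is entirely external to the spectral sequence (\ref{brunku2}). It works in mod $p$ homology: from $u^{p-1}=0$ in $(H\mathbb{F}_p)_*^S\ku$ one applies the derivation $\sigma_*\colon (H\mathbb{F}_p)_*^S\ku \to (H\mathbb{F}_p)_{*+1}^S\THH(\ku)$ to obtain $u^{p-2}\sigma_*(u)=0$ in $(H\mathbb{F}_p)_*^S\THH(\ku)$, checks that $u$ and $\sigma_*(u)$ are nonzero there (the latter via the B\"okstedt spectral sequence), observes they are $A_*$-comodule primitives, and then lifts the relation to $V(1)_*^S\THH(\ku)$ using that $V(1)\wedge_S^L\THH(\ku)$ is a module over $V(1)\wedge_S^L\ell \cong H\mathbb{F}_p$, which makes the Hurewicz map an isomorphism onto primitives.

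A smaller point: your claim that $\sigma u \in E^2_{3,0}$ is an infinite cycle ``because it lies in the bottom row'' is not a valid argument. Sitting in row $m=0$ prevents $\sigma u$ from \emph{receiving} differentials, not from supporting them; indeed $d^3(\sigma u)$ lands in bidegree $(0,2)$, where $u$ lives. The correct reason $\sigma u$ is an infinite cycle is that $u$ is a permanent cycle (detected via the injective unit $V(1)_*^S\ku \to V(1)_*^S\THH(\ku)$), hence cannot be a boundary.
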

\begin{proof}
To see this, we pass to homology. We have $u^{p-1} = 0$ in $\HFp_*^S\ku$. 
Recall from \cite[p.1238, Proposition 5.10]{AnRo} that we have a map $\ku \wedge S^1 \to \THH(\ku)$ such that the induced map in homology  $\sigma_*: {H\mathbb{F}_p}_*^S\ku \to {H\mathbb{F}_p}_{*+1}^S\THH(\ku)$ is a derivation. We apply  $\sigma_*$ to the equation $u^{p-1} = 0$  and get 
\[u^{p-2}\sigma_*(u) = 0\]  in $\HFp_*^S\THH(\ku)$, where $u$ is the image of $u$ under the unit ${H\mathbb{F}_p}_*^S\ku \to {H\mathbb{F}_p}_*^S\THH(\ku)$. Since the unit has a left inverse,  $u \in (H\mathbb{F}_p)_2^S\THH(\ku)$ is not zero. 
 We claim that  the class 
$\sigma_*(u)$  is also not zero. To see this we consider the B\"okstedt spectral sequence \cite[Section 4]{AnRo}
\[ E^2_{*,*}(\ku) = \mathbb{H}^{\mathbb{F}_p}_{*,*}({H\mathbb{F}_p}_*^S\ku) \Longrightarrow {H\mathbb{F}_p}_*^S\THH(\ku).\]
Here, $\mathbb{H}^{\mathbb{F}_p}_{*,*}({H\mathbb{F}_p}_*^S\ku)$ denotes the Hochschild homology of ${H\mathbb{F}_p}_*^S\ku$ over the ground ring  $\mathbb{F}_p$. 
By \cite[Proposition 3.2]{McSt} the class $\sigma_*u$ is a representative of the class $\sigma (u) \in E^{\infty}_{1,2}({\ku})$ that is given in the Hochschild complex by $1 \otimes u$.   The class $\sigma(u) \in E^{\infty}_{1,2}(\ku)$  is not zero, because by  standard computations of Hochschild homology (see \cite[Proposition 3.2, Proposition 3,3]{Au}) the bigraded $\mathbb{F}_p$-vector space $E^2_{*,*}(\ku)$ does not contain any class of total degree $4$ in a filtration degree $\geq 3$, and therefore $\sigma (u) \in E^2_{1,2}(\ku)$ cannot be hit by any differential.  

Recall that the mod $p$ homology of an $S$-module admits a natural comodule structure over the dual Steenrod algebra $A_* = {H\mathbb{F}_p}_*^SH\mathbb{F}_p$ \cite[Theorem 1.1]{BakLaz}. By \cite[Corollary 17.4]{Swi} the image of the Hurewicz map is contained in the $\mathbb{F}_p$-subspace of $A_*$-comodule primitives. Thus, the class $u \in {H\mathbb{F}_p}_*^S\ku$ as well as 
its image under the unit ${H\mathbb{F}_p}_*^S\ku \to {H\mathbb{F}_p}_*^S\THH(\ku)$ are  $A_*$-comodule primitives. 
Since the suspension isomorphism ${H\mathbb{F}_p}_*^S(-) \to {H\mathbb{F}_p}_{*+1}^S(- \wedge S^1)$ is compatible with the comodule action, it follows that the class $\sigma_*(u)$ is  also primitive.

We consider the diagram
\[\begin{tikzcd}
{H\mathbb{F}_p}_*^S\THH(\ku) \ar{r}{f} & {H\mathbb{F}_p}_*^S\bigl(V(1) \wedge_S^L \THH(\ku)\bigr)  & V(1)_*^S\THH(\ku) \ar{l}[swap]{h},
\end{tikzcd}\]
where the right map is the Hurewicz morphism. 
The map $f$ is injective because composed with 
 the K\"unneth isomorphism 
 \[ {H\mathbb{F}_p}_*^S(V(1) \wedge_S^L \THH(\ku)) \cong {H\mathbb{F}_p}_*^SV(1) \otimes {H\mathbb{F}_p}_*^S\THH(\ku)\] 
 it  is the inclusion 
 \[\HFp_*^S\THH(\ku) \to {H\mathbb{F}_p}_*^SV(1) \otimes  {H\mathbb{F}_p}_*^S\THH(\ku); \;\;\; a \mapsto 1 \otimes a.\]
The $S$-module $V(1) \wedge_S^L \THH(\ku)$ is a module (in $\mathscr{D}_S$) over the $
S$-ring spectrum $V(1) \wedge_S^L \ell \cong H\mathbb{F}_p$. Therefore, it is isomorphic in $\mathscr{D}_S$ to a coproduct of  $S$-modules of the form $H\mathbb{F}_p \wedge_S^L S^n_S$ and the Hurewicz morphism $h$ 
induces an isomorphism  onto  the $A_*$-comodule primitives of $\HFp_*^S(V(1) \wedge_S^L \THH(\ku))$. 
We  obtain two non-trivial elements $u'$ and $\sigma_*(u)'$ in $V(1)_*\THH(\ku)$ satisfying the equation 
\[ (u')^{p-2} \sigma_* (u)' = 0.\] 
\end{proof}

\begin{lem}
The classes $u$, $\sigma u$ and $\lambda_1$ in the Brun  spectral sequence (\ref{brunku2}) are infinite cycles. We have $d^s(\mu_1) = 0$ for $2 \leq s \leq 2p-4$ and there is a differential 
\[ d^{2p-3}(\mu_1) \doteq u^{p-2}\sigma u.\]
The spectral sequence collapses at the $E^{2p-2}$-page. 
\end{lem}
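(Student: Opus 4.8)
The plan is to read off almost everything from the sparse structure of the $E^2$-page of (\ref{brunku2}), namely the free graded-commutative algebra $P_{p-1}(u)\otimes E(\sigma u,\lambda_1)\otimes P(\mu_1)$, and to invoke the non-trivial classes of Lemma \ref{kudiffhilf} only for the two facts that this bookkeeping cannot settle. Since $u$ has bidegree $(0,2)$ while $\sigma u$, $\lambda_1$, $\mu_1$ have bidegrees $(3,0)$, $(2p-1,0)$, $(2p,0)$, the whole spectral sequence is concentrated in internal degrees $m\in\{0,2,\dots,2p-4\}$; as $d^s$ raises the internal degree by $s-1$, this forces $d^2=0$ (odd target) and $d^s=0$ for every $s\ge 2p-2$, so the collapse at $E^{2p-2}$ will follow once $d^{2p-3}$ is identified as the only remaining differential. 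The class $u$ is an infinite cycle because it lies in filtration column $n=0$, and $\lambda_1$ is an infinite cycle because a quick inspection of the monomials $u^{a}(\sigma u)^{\varepsilon}\lambda_1^{\delta}\mu_1^{b}$ shows $E^2$ vanishes in total degree $2p-2$. For $\mu_1$ one checks that the only nonzero bidegrees of $E^2$ in total degree $2p-1$ are $(2p-1,0)=\Fp\{\lambda_1\}$ and $(3,2p-4)=\Fp\{u^{p-2}\sigma u\}$; since $d^s(\mu_1)$ lands in bidegree $(2p-s,s-1)$, matching it with either slot forces $s=2p-3$, so $d^s(\mu_1)=0$ for $2\le s\le 2p-4$, and both $\mu_1$ and the class $u^{p-2}\sigma u$ (a product of the infinite cycles $u$ and $\sigma u$) survive to $E^{2p-3}$ without being hit earlier.

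The remaining claims, that $\sigma u$ is an infinite cycle and that $d^{2p-3}(\mu_1)\doteq u^{p-2}\sigma u$, I would derive from Lemma \ref{kudiffhilf}. A degree count shows that $V(1)_2^S\THH(\ku)$ and $V(1)_3^S\THH(\ku)$ have associated graded concentrated in the single slots $E^{\infty}_{0,2}$ and $E^{\infty}_{3,0}$; moreover $(3,0)$ cannot be hit by any differential, while the only differential that could hit $(0,2)$ is a $d^3$ out of $\Fp\{\sigma u\}$. Since $\sigma u'\ne 0$ forces $E^{\infty}_{3,0}=\Fp\{\sigma u\}$, the class $\sigma u$ is a permanent cycle; in particular $d^3(\sigma u)=0$, so nothing hits $(0,2)$ either and $E^{\infty}_{0,2}=\Fp\{u\}$. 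Hence $u'\ne 0$ is detected by a nonzero multiple of $u$, and a short further bidegree check (using $d^3(\sigma u)=0$) shows that $u^{p-2}$ and $\sigma u$ are permanent cycles representing nonzero classes on $E^{\infty}$. Now I would invoke the multiplicativity of the Brun spectral sequence (Theorem \ref{BrSS}): if $u^{p-2}\sigma u$ were nonzero on $E^{\infty}$, then the product $(u')^{p-2}\sigma u'$ would be detected by it and hence nonzero in $V(1)_{2p-1}^S\THH(\ku)$, contradicting Lemma \ref{kudiffhilf}. Therefore $u^{p-2}\sigma u=0$ on $E^{\infty}$; since it is an infinite cycle surviving to $E^{2p-3}$ and the only differential into bidegree $(3,2p-4)$ on that page is $d^{2p-3}$ emanating from $\Fp\{\mu_1\}$, we conclude $d^{2p-3}(\mu_1)\doteq u^{p-2}\sigma u$, and combined with $d^s=0$ for $s\ge 2p-2$ this yields $E^{2p-2}=E^{\infty}$.

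The main obstacle is the multiplicative detection step. One must make sure that $(u')^{p-2}\sigma u'$ is genuinely detected by the product $u^{p-2}\sigma u$ in the associated graded, which requires both that the Brun spectral sequence is multiplicative and that $u^{p-2}$ and $\sigma u$ are permanent cycles whose product is nonzero on the page where the relevant jump in filtration would occur — the latter being exactly what the preliminary sparsity analysis of $P_{p-1}(u)\otimes E(\sigma u,\lambda_1)\otimes P(\mu_1)$ supplies. Everything else is routine bookkeeping with total and internal degrees, together with the observation that the internal degree is bounded by $2p-4$, which is what shuts off all differentials beyond $d^{2p-3}$.
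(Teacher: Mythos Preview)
Your proof is correct and follows essentially the same route as the paper: sparsity of the $E^2$-page forces almost everything, and Lemma \ref{kudiffhilf} together with multiplicativity pins down the single nontrivial differential $d^{2p-3}(\mu_1)\doteq u^{p-2}\sigma u$. The only minor difference is how you show $\sigma u$ is an infinite cycle: the paper first observes that the unit $V(1)_*^S\ku \to V(1)_*^S\THH(\ku)$ is split and hence $u$ is a permanent cycle, so the only possible differential $d^3(\sigma u)\doteq u$ is excluded; you instead use $\sigma u'\neq 0$ from Lemma \ref{kudiffhilf} directly to force $E^{\infty}_{3,0}\neq 0$.
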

\begin{proof}
The class $u$ is an infinite cycle because it lies in  column zero. It is also a permanent 
cycle because  the 
 unit \[V(1) _*^S\ku \to V(1)_*^S\THH(\ku)\] is injective and 
so $V(1)_2^S\THH(\ku)$ is not zero. 
 This implies that $\sigma u$ is an infinite cycle, because the only possible non-trivial differential on $\sigma u$ is $d^{3}(\sigma u) \doteq u$. The class $\lambda_1$ is an infinite cycle because the $E^2$-page is zero in total degree $2p-2$. 

There is a differential $d^{2p-3}(\mu_1) \doteq u^{p-2}\sigma u$: By Lemma \ref{kudiffhilf} we have  two non-trivial elements $u'$ and $\sigma_*(u)'$ in $V(1)_*\THH(\ku)$ satisfying the equation 
\[ (u')^{p-2} \sigma_* (u)' = 0.\] 
Since the $E^{\infty}$-page of the Brun spectral sequence is $\mathbb{F}_p\{u\}$ in total degree two and $\mathbb{F}_p\{\sigma u\}$ in total degree three, up to a unit, $u'$ has to be a representative of $u$  and $\sigma u'$ has to be a representative of $\sigma u$. 
Therefore, we have $u^{p-2}\sigma u  = 0$ in  $E^{\infty}_{*,*}$ and the class $u^{p-2}\sigma u \in E^{2}_{*,*}$ 
has to be hit by a differential. The only class in total degree $2p$ is $\mu_1$. Thus, we have $d^s(\mu_1) = 0$ for $2 \leq s \leq 2p-4$ and $d^{2p-3}(\mu_1) \doteq u^{p-2}\sigma u$. 

The spectral sequence is  concentrated in the rows $0$ to $2p-4$, therefore it has to collapse at the $E^{2p-2}$-page. 
\end{proof}
\begin{lem} \label{kuresult}
We have $E^{\infty}_{*,*} = E^{2p-2}_{*,*} =  \Omega^{\infty}_{*,*} \otimes E(\lambda_1)$, where $\Omega^{\infty}_{*,*}$ is the bigraded commutative $\mathbb{F}_p$-algebra with generators
\[u, \,\,\, \mu_2, \,\,\, a_0, \dots, a_{p-1}, \,\,\, b_1, \dots, b_{p-1}\]
in bidegrees $|u| = (0,2)$, $|\mu_2| = (2p^2,0)$, $|a_i| = (2pi+3,0)$, $|b_i| = (2pi, 2)$ and relations
\begin{alignat}{3}
& ~~~ u^{p-1}  & = ~&  0, &&  \label{rel1}\\
& u^{p-2}a_i ~ & = ~&  0 && \text{~~for~}  0 \leq i \leq p-2, \label{rel2} \\
& u^{p-2}b_i ~  & = ~& 0  && \text{~~for~}  1 \leq i \leq p-1, \label{rel3} \\
& ~~~ b_ib_j & = ~&   ub_{i+j} && \text{~~for~}   i+j \leq p-1, \label{rel4} \\
& ~~~ a_i b_j & = ~&  u a_{i+j} && \text{~~for~}  i+j \leq p-1, \label{rel5} \\
& ~~~ b_ib_j  & = ~&  u b_{i+j-p} \mu_2 && \text{~~for~}  i+j \geq p,
\label{rel6} \\
& ~~~ a_ib_j   &  = ~& u a_{i+j-p} \mu_2 &&\text{~~for~}  i+j \geq p,
\label{rel7}  \\
& ~~~ a_i a_j  & = ~&  0  &&\text{~~for~} 0 \leq i,j \leq p-1. \label{rel8}
\end{alignat}
Here, we use the convention $b_0 = u$.  By bigraded commutative $\mathbb{F}_p$-algebra we mean a bigraded $\mathbb{F}_p$-algebra which is commutative in the following sense: 
If $a$ and $b$ are elements of total degree $n$ and $m$, we have $ab = (-1)^{nm}ba$. 
\end{lem}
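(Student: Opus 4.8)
The plan is to read off $E^{2p-2}_{*,*} = E^{\infty}_{*,*}$ as the homology of the differential bigraded algebra $\bigl(E^2_{*,*}, d\bigr)$, where $d \coloneqq d^{2p-3}$ and $E^2_{*,*} = P_{p-1}(u) \otimes E(\sigma u,\lambda_1) \otimes P(\mu_1)$. Recall from the preceding lemma that $u$, $\sigma u$, $\lambda_1$ are infinite cycles, that $d^s = 0$ for $2 \le s \le 2p-4$ (hence $E^2 = E^{2p-3}$, using that $d^s$ is a derivation), that $d(\mu_1) \doteq u^{p-2}\sigma u$, and that the spectral sequence collapses at $E^{2p-2}$; since $d$ is a derivation by Corollary~\ref{ssmult}, it is the $\mathbb{F}_p$-linear derivation determined by $d(u) = d(\sigma u) = d(\lambda_1) = 0$ and $d(\mu_1) \doteq u^{p-2}\sigma u$. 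Rescaling the polynomial generator $\mu_1$ (harmless, as it only rescales $d$) we may assume $d(\mu_1) = u^{p-2}\sigma u$. Since $\lambda_1$ is a $d$-cycle that is not a $d$-boundary and $d$ is $E(\lambda_1)$-linear, we get $E^{\infty}_{*,*} \cong E(\lambda_1) \otimes H_*(C,d)$ with $C \coloneqq P_{p-1}(u) \otimes E(\sigma u) \otimes P(\mu_1)$, so it remains to identify $\Omega^{\infty}_{*,*}$ with $H_*(C,d)$ as a bigraded algebra.

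To compute $H_*(C,d)$ I would write $C \cong B \otimes_{\mathbb{F}_p} P(\mu_1)$ with $B \coloneqq P_{p-1}(u) \otimes E(\sigma u)$ carrying the zero differential and $c \coloneqq u^{p-2}\sigma u \in B$, so that $d(b\mu_1^k) = k\,bc\,\mu_1^{k-1}$ (no signs, $\mu_1$ being even). As $\mu_2 \coloneqq \mu_1^p$ is a $d$-cycle and $C$ is free over $\mathbb{F}_p[\mu_2]$ on $\{1,\mu_1,\dots,\mu_1^{p-1}\}$, this exhibits $C$ as $\mathbb{F}_p[\mu_2]$ tensored with the finite complex $B\mu_1^{p-1} \xrightarrow{(p-1)c} \cdots \xrightarrow{2c} B\mu_1 \xrightarrow{c} B$, whose consecutive differentials compose to zero because $c^2 = 0$. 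Using $cB = \mathbb{F}_p\{u^{p-2}\sigma u\}$ and $\mathrm{Ann}_B(c) = \mathbb{F}_p\{u,u^2,\dots,u^{p-2}\} \oplus \mathbb{F}_p\{\sigma u, u\sigma u,\dots,u^{p-2}\sigma u\}$, a direct computation of the homology of this finite complex gives $H_*(C,d) \cong \mathbb{F}_p[\mu_2] \otimes \bigl( B/cB \;\oplus\; \bigoplus_{i=1}^{p-2}(\mathrm{Ann}_B(c)/cB)\mu_1^i \;\oplus\; \mathrm{Ann}_B(c)\,\mu_1^{p-1} \bigr)$. In particular every bidegree is finite-dimensional and the total dimension per power of $\mu_2$ equals $\dim_{\mathbb{F}_p}(B/cB) + (p-2)\dim_{\mathbb{F}_p}(\mathrm{Ann}_B(c)/cB) + \dim_{\mathbb{F}_p}\mathrm{Ann}_B(c) = (2p-3) + (p-2)(2p-4) + (2p-3) = 2(p-1)^2$.

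Next I would define the homomorphism $\Phi$ of bigraded $\mathbb{F}_p$-algebras from the algebra $\mathcal{A}$ presented by the generators and relations (\ref{rel1})--(\ref{rel8}) into $H_*(C,d)$ by $u \mapsto [u]$, $\mu_2 \mapsto [\mu_1^p]$, $a_i \mapsto [\mu_1^i\sigma u]$, $b_i \mapsto [u\mu_1^i]$ (consistent with $b_0 = u$); note the bidegrees match. Well-definedness amounts to checking the relations on these representatives: (\ref{rel4})--(\ref{rel7}) are immediate because $\mu_1^p = \mu_2$ on the nose; (\ref{rel8}) because $(\sigma u)^2 = 0$; (\ref{rel1}) and (\ref{rel3}) because $u^{p-1} = 0$ in $P_{p-1}(u)$; and (\ref{rel2}) for $0 \le i \le p-2$ because $u^{p-2}\sigma u = d(\mu_1)$ represents $0$ both in $B/cB$ and in $(\mathrm{Ann}_B(c)/cB)\mu_1^i$ — this last argument fails exactly for the slot $i = p-1$, where the homology is $\mathrm{Ann}_B(c)\mu_1^{p-1}$ rather than a quotient by $cB$, which is why (\ref{rel2}) is asserted only for $i \le p-2$. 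Surjectivity of $\Phi$ is clear since the homology description shows every class is represented by a monomial $u^s\mu_1^k$ or $u^s\mu_1^k\sigma u$, each of which is a product of the images of $u,\mu_2,a_i,b_i$.

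The remaining, and most laborious, step — the one I expect to be the real obstacle — is injectivity of $\Phi$. The approach is to show that relations (\ref{rel1})--(\ref{rel8}) force every element of $\mathcal{A}$ into the $\mathbb{F}_p$-span of the normal forms $u^s\mu_2^m$, $u^s\mu_2^m a_i$ and $u^s\mu_2^m b_i$ (with $0 \le s \le p-2$ in the first family; $0 \le s \le p-3$ for $0 \le i \le p-2$ and $0 \le s \le p-2$ for $i = p-1$ in the second; $1 \le i \le p-1$ and $0 \le s \le p-3$ in the third): relations (\ref{rel8}) and (\ref{rel4})--(\ref{rel7}) reduce any monomial to $u^s\mu_2^m z$ with $z \in \{1, a_0,\dots,a_{p-1}, b_1,\dots,b_{p-1}\}$, and then (\ref{rel1})--(\ref{rel3}) bound $s$. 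A count of these normal forms in each bidegree gives exactly $2(p-1)^2$ per power of $\mu_2$, matching $\dim_{\mathbb{F}_p} H_*(C,d)$ from the second paragraph; combined with surjectivity this forces $\Phi$ to be an isomorphism. The delicate points in this bookkeeping are the asymmetric index ranges in (\ref{rel2}) versus (\ref{rel3}) (reflecting that $u^{p-2}a_{p-1} \neq 0$ in $\Omega^{\infty}_{*,*}$ while $u^{p-2}b_i = 0$) and verifying that reducing products of several $b$'s (and at most one $a$) via (\ref{rel4})--(\ref{rel7}) introduces no relation beyond those listed. With the normal forms and the dimension count in hand, the claimed identification $E^{2p-2}_{*,*} = E^{\infty}_{*,*} = \Omega^{\infty}_{*,*} \otimes E(\lambda_1)$ follows.
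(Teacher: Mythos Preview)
Your proposal is correct and follows essentially the same approach as the paper: factor off $E(\lambda_1)$, compute the homology of $(P_{p-1}(u)\otimes E(\sigma u)\otimes P(\mu_1),\,d^{2p-3})$, define the map from the presented algebra $\Omega^{\infty}_{*,*}$ via $\mu_2=\mu_1^p$, $a_i=\sigma u\,\mu_1^i$, $b_i=u\,\mu_1^i$, check the relations hold, and conclude by a basis/dimension comparison. The only cosmetic difference is that the paper writes down the $\mathbb{F}_p$-basis of the homology explicitly and observes that the relations put a generating set of $\Omega^{\infty}_{*,*}$ in bijection with it, whereas you organize the same count via $\mathrm{Ann}_B(c)$, $cB$, and the total $2(p-1)^2$ per power of $\mu_2$; both arguments are the same in substance.
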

\begin{proof}
We have $E^{2p-2}_{*,*} = H_*\bigl((\Omega_{*,*}, d')\bigr) \otimes E(\lambda_1)$, where 
\[\Omega_{*,*} = P_{p-1}(u) \otimes E(\sigma u) \otimes P(\mu_1)\]
and $d'$ is the restriction of the differential of $E^{2p-3}$. 
Computing homology, we find that as an $\mathbb{F}_p$-vector space 
 \begin{eqnarray} \label{basis}
 H_*\bigl((\Omega_{*,*}, d')\bigr) = &&  \bigoplus_{n \geq 0} \Fp\{\mu_1^{pn}\} \oplus \bigoplus_{\substack{n\geq 0\\i= 1, \dotsc,p-2}} \F_p\{u^i  \mu_1^n\} 
 \oplus   \bigoplus_{\substack{n \geq 0\\i = 0, \dotsc p-3}} \Fp\{u^i  \sigma u  \mu_1^n\} \nonumber \\ 
 & \oplus &
\bigoplus_{n \geq 0} \Fp\{u^{p-2}  \sigma u  \mu_1^{pn+p-1}\}.    
 \end{eqnarray}
 We define classes in $H_*\bigl((\Omega_{*,*}, d')\bigr)$ by $\mu_2 \coloneqq \mu_1^p$, $a_i \coloneqq \sigma u \mu_1^i$ for $i = 0, \dots, p-1$ and $b_i \coloneqq u \mu_1^i$ for $i = 1, \dots, p-1$. Obviously, these classes satisfy the relations (\ref{rel1})-(\ref{rel8}). 
 Note that $u^{p-2}\sigma u \mu_1^{p-1}$ is not in the image of $d'$, so that $u^{p-2}a_i = 0$  only holds for $i = 0, \dots, p-2$. We get a map of bigraded $\mathbb{F}_p$-algebras 
 \[\begin{tikzcd}
  \Omega^{\infty}_{*,*} \ar{r}{f} & H_*\bigl((\Omega_{*,*}, d')).
  \end{tikzcd}\] 
Using the relations (\ref{rel1})-(\ref{rel8}) one easily sees that $f$ maps a generating set bijectively onto the basis of $H_*\bigl((\Omega_{*,*}, d')\bigr)$ given in (\ref{basis}). It follows that  that $f$ is an isomorphism. 
\end{proof}

\begin{thm}
We have 
\[ V(1)_*^S\THH(\ku)  = E(\lambda_1) \otimes \Omega^{\infty}_{*,*},\]
where we consider $\Omega^{\infty}_{*,*}$ as graded $\mathbb{F}_p$-algebra via  the total grading. 
\end{thm}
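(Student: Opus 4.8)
The plan is to read off $V(1)_*^S\THH(\ku)$ from the strongly convergent multiplicative spectral sequence (\ref{brunku2}) together with the computation of its $E^\infty$-page in Lemma \ref{kuresult}. By Theorem \ref{BrSS}, Corollary \ref{ssmult} and Lemma \ref{convmult}, the group $V(1)_*^S\THH(\ku)$ carries a filtration which is finite in each total degree and whose associated graded algebra is $E^\infty_{*,*}=E(\lambda_1)\otimes\Omega^\infty_{*,*}$, regraded by total degree. Since $V(1)\wedge_S^L\ell\cong H\mathbb{F}_p$, the $S$-module $V(1)\wedge_S^L\THH(\ku)$ is an $H\mathbb{F}_p$-module, so every group in sight is an $\mathbb{F}_p$-vector space of finite dimension in each degree; in particular $V(1)_*^S\THH(\ku)$ and $E(\lambda_1)\otimes\Omega^\infty_{*,*}$ are abstractly isomorphic as graded $\mathbb{F}_p$-vector spaces, and it remains to promote this to an isomorphism of graded algebras, i.e.\ to rule out hidden multiplicative extensions.

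First I would fix lifts of the algebra generators of $E^\infty_{*,*}$. The class $u$ lifts to the image of the Bott class under the ring map $V(1)_*^S\ku\to V(1)_*^S\THH(\ku)$ coming from the unit of the $V(1)_*^S\ku$-algebra $V(1)_*^S\THH(\ku)$; since $V(1)_*^S\ku=P_{p-1}(u)$, this lift satisfies $u^{p-1}=0$ on the nose and is compatible with the convention $b_0=u$. The class $\lambda_1$, of odd total degree $2p-1$, admits a lift (being a permanent cycle), and any such lift squares to zero because $V(1)_*^S\THH(\ku)$ is graded-commutative (both $V(1)$ and $\THH(\ku)$ being commutative $S$-ring spectra) and $p$ is odd. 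Choose arbitrary lifts $\sigma u$, $\tilde\mu_2$, $\tilde a_0,\dots,\tilde a_{p-1}$, $\tilde b_1,\dots,\tilde b_{p-1}$ of the remaining generators of $E^{2p-2}_{*,*}=E^\infty_{*,*}$, of the indicated total degrees.

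The main obstacle is to verify that the relations (\ref{rel1})--(\ref{rel8}) lift to identities in $V(1)_*^S\THH(\ku)$. For each relation the two sides represent the same class in $E^\infty_{*,*}$, so they differ by an element of strictly lower Brun filtration than the filtration at which that relation is detected, and I would show this element is zero by a degree-by-degree inspection of the explicit $E^\infty_{*,*}$ of Lemma \ref{kuresult}: in each total degree occurring in (\ref{rel1})--(\ref{rel8}) the graded pieces of strictly smaller filtration vanish. The bookkeeping rests on two features of the generators: $\lambda_1$ and the $a_i$ carry odd total degree whereas $u$, $\mu_2$ and the $b_i$ carry even total degree, so a class in a given total degree but of the ``wrong'' filtration would have to contain one more odd-degree factor than it can; and the degrees of $\mu_1$, hence of $\mu_2$, and of the $b_i$ are divisible by $2p$, which excludes the remaining candidate pieces by a divisibility count (the hypothesis $p\ge 5$ enters to guarantee, e.g., that $u^3\ne 0$). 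A priori one might have to modify the lifts so that all relations hold simultaneously, but the degree count shows no modification is needed. This yields a well-defined homomorphism of graded $\mathbb{F}_p$-algebras $\Phi\colon E(\lambda_1)\otimes\Omega^\infty_{*,*}\to V(1)_*^S\THH(\ku)$.

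Finally, $\Phi$ preserves filtrations and, by construction, induces on associated gradeds a map sending a generating set of $E^\infty_{*,*}$ to itself; hence the induced map on associated gradeds is surjective, and since the filtration on $V(1)_*^S\THH(\ku)$ is exhaustive and finite in each degree, $\Phi$ is surjective. A surjection of graded $\mathbb{F}_p$-vector spaces that are finite-dimensional and equidimensional in each degree is an isomorphism, so $\Phi$ is an isomorphism, which is the assertion of the theorem once $\Omega^\infty_{*,*}$ is regraded by total degree. If one is willing to invoke the ring structure on $V(1)_*^S\THH(\ku)$ already computed by Ausoni \cite{Au} (or Rognes--Sagave--Schlichtkrull \cite{RoSaSch}), the third paragraph can be replaced by matching that ring with $E^\infty_{*,*}$; but the point here is to keep the computation self-contained.
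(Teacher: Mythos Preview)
Your central claim---that for each relation (\ref{rel1})--(\ref{rel8}) the graded pieces of $E^\infty_{*,*}$ in the same total degree but strictly smaller filtration vanish---is false, so the parity/divisibility bookkeeping you sketch cannot close the argument. Consider relation (\ref{rel5}), $a_ib_j=ua_{i+j}$ for $i+j\le p-1$. Both sides have total degree $2p(i+j)+5$ and filtration $2p(i+j)+3$. But the class $\lambda_1 u^2 b_{i+j-1}$ (with the convention $b_0=u$) has the same total degree $2p(i+j)+5$ and strictly smaller filtration $2p(i+j)-1$; moreover it is nonzero in $E^\infty$ since $p\ge 5$. So there is a genuine potential obstruction that your parity check does not rule out (both $a_ib_j$ and $\lambda_1 u^2 b_{i+j-1}$ contain exactly one odd-degree factor). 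The same phenomenon occurs for (\ref{rel7}) and (\ref{rel8}): for $a_ia_j$ in total degree $2p(i+j)+6$, the classes $u^2 b_{i+j}$ and $\lambda_1 u^2 a_{i+j-1}$ sit in lower filtration and are nonzero. Your assertion that the total degree of $b_i$ is divisible by $2p$ is also incorrect: $|b_i|=2pi+2$.

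The paper resolves these extensions by an extra structure you have not used: the Galois automorphism $\delta$ of order $p-1$ on the model $\ku=\K(\cup\mathbb{F}_{l^{p^i(p-1)}})_p$ induces an action on the Brun spectral sequence and on $V(1)_*^S\THH(\ku)$, splitting everything into eigenspaces (``$\delta$-weights''). One checks that $u$, $a_i$, $b_i$ have $\delta$-weight $1$ and $\lambda_1$, $\mu_2$ have $\delta$-weight $0$, and then chooses the lifts to lie in the correct eigenspace. The obstructing classes above then have $\delta$-weight $3$, whereas the products $a_ib_j$, $a_ia_j$ have $\delta$-weight $2$, so the extensions vanish. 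Without this (or an equivalent) grading, the degree count alone does not suffice; your fallback of quoting \cite{Au} or \cite{RoSaSch} works, but as you note it defeats the purpose.
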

\begin{proof} 
We use a method from \cite{Au}: 
Recall that we have chosen $\K(\cup \mathbb{F}_{l^{p^i(p-1)}})_p$ as a model for $\ku$, where $l$ is a prime that generates $(\mathbb{Z}/{p^2})^*$.  The Galois group of $\cup \mathbb{F}_{l^{p^i(p-1)}}$ over $\mathbb{F}_l$ is given by $\mathbb{Z}_p \times \mathbb{Z}/{p-1}$.  Let $\delta$ be a generator of the supgroup $\mathbb{Z}/{p-1}$.  By Lemma \ref{nat}  the diagram 
\[\begin{tikzcd}
\ku \ar{d}[swap]{\delta} \ar{r}{\id} & \ku \ar{d} {\delta} \\
\ku \ar{r}{\id}  & \ku. 
\end{tikzcd}\]
induces a morphism of Brun spectral sequences $\delta^*_{*,*}$ converging to 
$\delta_*\colon V(1)_*^S\THH(ku) \to V(1)_*^S\THH(ku)$.  By linear algebra every finite-dimensional vector space over $\mathbb{F}_p$ admiting an automorphism $f$ with $f^{p-1} =\id$ splits into eigenspaces with respect to $f$. 
We get that the Brun spectral sequence splits into a direct sum of $p-1$ spectral sequence (corresponding to the $p-1$ possible eigenvalues). Furthermore, $V(1)_*^S\THH(\ku)$ splits into eigenspaces and the subspectral  sequence corresponding to an eigenvalue $\beta$ converges to  the eigenspace corresponding to $\beta$.  

For the class  $u  \in V(0)^S_*\ku$ we have $\delta_*(u) = \alpha u$ for a generator $\alpha$ of $\mathbb{F}_p^*$ \cite[p. 1267]{Au}.  It follows that the same is true for the classes $u \in V(1)_*^S\ku$ and $u \in (H\mathbb{F}_p)_*^S\ku$.  We say that a class has $\delta$-weight $i \in \mathbb{Z}/{p-1}$ if it lies in the eigenspace corresponding to $\alpha^i$.

The class $\sigma u \in E^2_{3,0}$ has $\delta$-weight $1$, because $u \in (H\mathbb{F}_p)_*^S\ku$ has $\delta$-weight $1$ and because the map $\sigma_*: (H\mathbb{F}_p)_*\ku \to (H\mathbb{F}_p)_{*+1}\THH(\ku)$ is natural.
We claim that the classes $\lambda_1, \mu_1 \in E^2_{*,*}$ have $\delta$-weight $0$:  
 Note that $\cup \mathbb{F}_{l^{p^i}}$ is the subfield of $\cup \mathbb{F}_{l^{p^i(p-1)}}$ fixed under $\delta$.   It follows that $\delta \colon \ku \to \ku$ restricts to the identity map on the Adams summand $\ell$. We consider the $\ell^e$-module structure on $H\mathbb{F}_p$ given by the morphism of commutative $S$-algebras 
\[\begin{tikzcd}
\ell \ar{r} & \ku \ar{r} & H\mathbb{Z}_p \ar{r} & H\mathbb{F}_p. 
\end{tikzcd}\]
We then have commutative diagram in $\mathscr{D}_S$
\[\begin{tikzcd}
H\mathbb{F}_p \wedge_{\ku^e}^L \ku \ar{r}{\cong} &  H\mathbb{F}_p \wedge_{\ku^e}^L \ku  \\
H\mathbb{F}_p \wedge_{\ell^e}^L \ell \ar{r} {\id} \ar{u} & H\mathbb{F}_p \wedge_{\ell^e}^L \ell \ar{u},
\end{tikzcd}\]
where the upper horizontal map is the one inducing $\delta^2_{*,*}$ on the second tensor factor in 
\[E^2_{*,*} = V(1)_*^S\ku \otimes \THH_*(\ku; H\mathbb{F}_p). \]
 It thus suffices to show that $\lambda_1$ and $\mu_1^p$ are in the image of $\pi_*(H\mathbb{F}_p \wedge_{\ell^e}^L \ell) \to \pi_*(H\mathbb{F}_p \wedge_{\ku^e}^L \ku)$. 
The diagram 
\[\begin{tikzcd}
\ku \ar{r} & H\mathbb{Z}_p \\
\ell \ar{r} \ar{u} & H\mathbb{Z}_p \ar{u}[swap]{\id} 
\end{tikzcd}\]
factors as 
\[\begin{tikzcd}
\ku \ar[tail]{r} & \hat{H}\mathbb{Z}_p \ar[two heads]{r}{\sim} & H\mathbb{Z}_p \\
\ell \ar{u} \ar[tail]{r} & \tilde{H}\mathbb{Z}_p \ar[two heads]{r}{\sim} \ar{u} & H\mathbb{Z}_p \ar{u}[swap]{\id}.
\end{tikzcd}\]
The map  $\pi_*(H\mathbb{F}_p \wedge_{\ell^e}^L \ell) \to \pi_*(H\mathbb{F}_p \wedge_{\ku^e}^L \ku)$ identifies with 
\[V(0)^S_*\THH(\ell; \tilde{H}\mathbb{Z}_p)  \to V(0)_*^S\THH(\ku; \hat{H}\mathbb{Z}_p)\]
and we have a morhism of Brun spectral sequences converging to this map. 
The $E^2$-page of the first Brun spectral sequence is 
\[E^2_{*,*}  = E(\sigma v_1) \otimes E(\lambda_1') \otimes P(\mu_1'), \]
where $|\sigma v_1| = (0, 2p-1)$, $|\lambda_1'| = (2p-1, 0)$ and $|\mu_1'| = (2p, 0)$. By \cite[Theorem X.2.4, Proposition IX.2.8, Theorem VII. 6.5, Theorem VII. 6.7]{EKMM} commutative diagrams of cofibrant commutative $S$-algebras, where the vertical maps are weak equivalences, induces a weak equivalence in $\THH(-;-)$. Therefore, the map  of spectral sequence has to send $\lambda_1'$ to $\lambda_1$ and $\mu_1'$ to $\mu_1$ up to a unit. 
Since $\lambda_1'$ and $\mu_1'^p$ have to survive to the $E^{\infty}$-page and since $V(0)_{2p^2}^S\THH(\ku, \hat{H}\mathbb{Z}_p)$  and  $V(0)^S_{2p-1}\THH(\ku, \hat{H}\mathbb{Z}_p)$ are one dimensional this proves that $\lambda_1$ and $\mu_1^p$ are in the image of $\pi_*(H\mathbb{F}_p \wedge_{\ell^e}^L \ell) \to \pi_*(H\mathbb{F}_p \wedge_{\ku^e}^L \ku)$. 

We get that the classes $u$, $\lambda_1$, $\mu_2$, $a_i$ for $i = 0 \dots, p-1$ and $b_i$ for $i = 1, \dots, p-1$ in the $E^{\infty}$-page of the Brun spectral sequence converging to $V(1)_*^S\THH(\ku)$ have the $\delta$-weights $1$, $0$, $0$, $1$ and $1$ respectively.  
One easily checks that these classes have unique representatives in $V(1)_*^S\THH(\ku)$  with the same $\delta$-weight.  We use the same names for the representatives.  

The relations  $u^{p-1}  = 0$, $u^{p-2} a_i = 0$ for $i = 0, \dots, p-2$, $u^{p-2}b_i = 0$ for $i = 1, \dots, p-1$,  $b_i b_j = u b_{i+j}$  for $i+j \leq p-1$ and $b_i b_j = u b_{i+j-p} \mu_2$ for $i+j \geq p$ hold in $V(1)_*^S\THH(\ku)$, because they hold in the $E^{\infty}$-page and because the $E^{\infty}$-page is zero in lower filtration in the respective total degrees.

We claim  that the relations $a_ib_j = u a_{i+j}$ for $i+j \leq p-1$ and $a_i b_j = u a_{i+j-p} \mu_2$ for $i+j \geq p$ hold in $V(1)_*^S\THH(\ku)$. For this note that the only classes with the same total degree and lower filtration degree in the $E^{\infty}$-page are 
\begin{eqnarray*}
\lambda_1 u^2 b_{i+j-1}, & \text{if} & 1 \leq i+j \leq p; \\
\lambda_1 u^2  b_{i+j-1-p} \mu_2,  & \text{if} &  p+1 \leq i+j \leq 2p-2;
\end{eqnarray*}
where we again use the convention $b_0 = u$. 
Since these classes have $\delta$-weight $3$, whereas $a_ib_j$ has $\delta$-weight $2$, this proves the claim. 
We claim that $a_ia_j = 0$ in $V(1)_*^ S\THH(\ku)$. For this note that 
the only classes with the same total degree and lower filtration degree in the $E^{\infty}$-page are
\begin{eqnarray*}
u^2 b_{i+j},  & \text{if} & 0 \leq i+j \leq p-1; \\
u^2 b_{i+j-p} \mu_2, & \text{if}  &  p  \leq i+j \leq  2p-2; \\ 
\lambda_1 u^2 a_{i+j-1} , & \text{if} & 1 \leq i + j \leq p; \\
\lambda_1 \mu_2 u^2 a_{i+j-1-p}, & \text{if} &  p+1 \leq i+j \leq 2p-2. 
\end{eqnarray*}
These classes have $\delta$-weight $3$, whereas $a_i a_j$ has $\delta$-weight $2$.  
\end{proof}

\printbibliography
\end{document}